\newif\iflongversion
\DeclareDocumentCommand{\runs}{O{} O{} m}{%
  \mathsf{Runs}_{#1}^{#2}(#3)
}
\def\pe{\mathsf{pe}}
\def\thickness{\mathsf{thickness}}
\DeclareDocumentCommand{\retruns}{O{} O{} m}{%
  \mathsf{Ret}_{#1}^{#2}(#3)
}
\DeclareDocumentCommand{\leftruns}{O{} O{} m}{%
  \mathsf{RetL}_{#1}^{#2}(#3)
}
\DeclareDocumentCommand{\lang}{m}{%
  \mathsf{L}(#1)
}
\DeclareDocumentCommand{\shorter}{O{}}{%
  \ll_{#1}
}
\DeclareDocumentCommand{\vshorter}{O{}}{%
  \rotatebox[origin=c]{90}{$\shorter[#1]$}
}
\DeclareDocumentCommand{\shorterdiff}{o}{%
  \IfNoValueTF{#1}{\ll}{\ll_{(#1)}}
}
\DeclareDocumentCommand{\vshorterdiff}{o}{%
  \rotatebox[origin=c]{90}{$\shorterdiff[#1]$}
}
\DeclareDocumentCommand{\pos}{m}{%
  \mathsf{pos}(#1)
}
\def\cursor#1{#1^{\triangleleft}}
\def\rpoint#1{#1_{\bullet}}
\def\pmax{\mathsf{pmax}}
\def\pmin{\mathsf{pmin}}
\def\sv{\mathsf{sv}}
\newcommand{\ltr}[1]{\mathtt{#1}}
\DeclareMathOperator{\GL}{GL}
\newcommand{\Q}{\mathbb{Q}}
\newcommand{\newextmathcommand}[2]{%
    \newcommand{#1}{\ensuremath{#2}\xspace}
}
\newextmathcommand{\A}{\mathcal A}
\newextmathcommand{\D}{\mathcal D}
\newextmathcommand{\mainbs}{\BS(1,q)}
\newextmathcommand{\PSPACE}{\mathsf{PSPACE}}
\newcommand{\BS}{\mathrm{BS}}
\newcommand{\Z}{\mathbb{Z}}
\newcommand{\N}{\mathbb{N}}
\newcommand{\calA}{\mathcal{A}}
\newcommand{\calB}{\mathcal{B}}
\newcommand{\calM}{\mathcal{M}}
\newtheorem{fact}[theorem]{Fact}
\title{Rational subsets of Baumslag-Solitar groups}
\author{Micha\"{e}l Cadilhac}
{DePaul University, Chicago, IL, USA}
{michael@cadilhac.name} 
{https://orcid.org/0000-0001-9828-9129} 
{} 
\author{Dmitry Chistikov}
{Centre for Discrete Mathematics and its Applications (DIMAP) \&\newline
 Department of Computer Science, University of Warwick, United Kingdom}
{d.chistikov@warwick.ac.uk} 
{https://orcid.org/0000-0001-9055-918X} 
{} 
\author{Georg Zetzsche}
{Max Planck Institute for Software Systems (MPI-SWS), Germany}
{georg@mpi-sws.org} 
{https://orcid.org/0000-0002-6421-4388} 
{} 
\authorrunning{M. Cadilhac, D. Chistikov, and G. Zetzsche}
\keywords{Rational subsets, Baumslag-Solitar groups, decidability, regular
  languages, pointed expansion}
\begin{document}

\maketitle

\begin{abstract}
  We consider the rational subset membership problem for
  Baumslag-Solitar groups. These groups form a prominent class 
  in the area of algorithmic group theory, and they were recently
  identified as an obstacle for understanding the rational subsets of
  $\GL(2,\Q)$.
  
  We show that rational subset membership for Baumslag-Solitar groups $\BS(1,q)$
  with $q\ge 2$ is decidable and $\PSPACE$-complete. To this end, we introduce a
  word representation of the elements of $\BS(1,q)$: their pointed expansion
  (PE), an annotated \(q\)-ary expansion.  Seeing subsets of \(\BS(1,q)\) as word
  languages, this leads to a natural notion of PE-regular subsets of
  \(\BS(1, q)\): these are the subsets of \(\BS(1,q)\) whose sets of PE are regular
  languages.  Our proof shows that every rational subset of $\BS(1,q)$ is
  PE-regular.

  Since the class of PE-regular subsets of $\BS(1,q)$ is well-equipped
  with closure properties, we obtain further applications of these
  results. Our results imply that (i)~emptiness of Boolean
  combinations of rational subsets is decidable, (ii)~membership to
  each fixed rational subset of $\BS(1,q)$ is decidable in logarithmic
  space, and (iii)~it is decidable whether a given rational subset is
  recognizable. In particular, it is decidable whether a given
  finitely generated subgroup of $\BS(1,q)$ has finite index.
\end{abstract}

\iflongversion
  \makeatletter
  \newpage
  \pagenumbering{gobble}
  \vskip0.7\bigskipamount
  \noindent
  \rlap{\color{lipicsLineGray}\vrule\@width\textwidth\@height1\p@}%
  \hspace*{7mm}\fboxsep1.5mm\colorbox[rgb]{1,1,1}{\raisebox{-0.4ex}{%
    \large\selectfont\sffamily\bfseries Contents}}
  \setcounter{tocdepth}{2}
  \setlength{\cftbeforesecskip}{0em}
  \vskip3pt

  {\renewcommand{\familydefault}{\sfdefault}\sffamily
    \@starttoc{toc}}
  \makeatother
\fi

\newpage
\pagenumbering{arabic}
\setcounter{page}{1}

\section{Introduction}
\subparagraph*{Subsets of groups}
Regular languages are an extremely versatile tool in algorithmics on
sets of finite words. This is mainly due to two reasons. First, they
are robust in terms of representations and closure properties: They
can be described by finite automata, by recognizing morphisms, and
by monadic second-order logic and they are closed under Boolean and an
abundance of other operations. Second, many properties (such as emptiness) are easily
decidable using finite automata.

Given this success, there have been several attempts to develop an analogous
notion for subsets of (infinite, finitely generated) groups. Adapting the notion
of recognizing morphism yields \emph{recognizable subsets} of a group $G$. They
are closed under Boolean operations, and problems such as membership or
emptiness are decidable. However, since they are merely unions of cosets of
finite-index normal subgroups, their expressiveness is severely limited.

Another notion is that of \emph{rational subsets}, which transfer
(non-deterministic) finite automata to groups. Starting with
pioneering work by Benois~\cite{Benois1969} in 1969, they have matured
into an important tool in group theory. Rational subsets are quite
expressive: They include finitely generated submonoids and are closed
under (finite) union, pointwise product, and Kleene star. Moreover, they have
been applied successfully to solving
equations in groups~\cite{DIEKERT2005105,DBLP:conf/icalp/CiobanuE19},
as well as in
other settings~\cite{BaSi2010,silva2017automata}.

The high expressiveness of rational subsets comes at the cost of
undecidability of decision problems for many groups.  The most
fundamental one is the \emph{membership problem} for rational subsets:
Given a rational subset $R$ of a group $G$ and an element $g\in G$,
does $g$ belong to $R$?  Understanding for which groups this problem
is decidable received significant attention over the last two decades,
see~\cite{Lohrey2016survey} for a survey. Unfortunately, the rational
subsets do not quite reach the level of robustness of regular
languages. In general, the class of rational subsets of a group is not
closed under Boolean operations, and the
papers~\cite{DBLP:journals/ijac/LohreyS08,bazhenova2000rational} study
for which groups the rational subsets form a Boolean algebra.

\subparagraph*{Baumslag-Solitar groups} A prominent class of groups is
that of \emph{Baumslag-Solitar groups} $\BS(p,q)$. For each
$p,q\in\N$, the group is defined as
$\BS(p,q)=\langle a,t \mid ta^pt^{-1}=a^q\rangle$. They were
introduced in 1962 by Baumslag and Solitar to provide an example of a
two-generator one-relator group that is non-Hopfian. They recently
came into focus from the algorithmic perspective in a paper by
Kharlampovich, L\'opez, and
Miasnikov~\cite{KharlampovichLopezMiasnikov2019}, which shows that
solvability of equations is decidable in $\BS(1,q)$.  They have also
been studied from several other perspectives, such as the decidability
and complexity of the word
problem~\cite{Robinson1993,DBLP:conf/latin/DiekertMW14,Weiss2015}, the
conjugacy problem~\cite{DBLP:conf/latin/DiekertMW14,Weiss2015}, tiling problems~\cite{DBLP:journals/corr/AubrunK13}, and
computing normal
forms~\cite{DBLP:journals/ijac/DiekertL11,ELDER2013260,elder2010}.

More specifically to our setting, the Baumslag-Solitar groups have
recently been identified by Diekert, Potapov, and
Semukhin~\cite{DBLP:journals/corr/abs-1910-02302} as a stumbling block
in solving rational subset membership in the group $\GL(2,\Q)$, that is,
the group of invertible $2 \times 2$ matrices over $\Q$.
They show
that any subgroup of $\GL(2,\Q)$ containing $\GL(2,\Z)$ is either of the
form $\GL(2,\Z)\times\Z^k$ for $k\ge 1$ or contains $\BS(1,q)$ as a
subgroup for some $q\ge 2$.
Rational subset membership for $\GL(2,\Z)\times\Z^k$ is today a matter
of standard arguments~\cite{Lohrey2016survey},
because $\GL(2,\Z)$ is virtually free.
Therefore, making significant progress towards decidability in
larger subgroups requires understanding rational subsets of
$\BS(1,q)$. 

One can represent the elements of $\BS(1,q)$ as pairs $(r,m)$, where
$r$ is a number in $\Z[\tfrac{1}{q}]$,
say $r=\pm\sum_{i=-n}^n a_iq^i$
for $a_{-n},a_{-n+1},\ldots,a_n\in\{0,\ldots,q-1\}$,%
\footnote{%
    $\Z[\tfrac{1}{q}]$ denotes (the additive group of)
    the smallest subring of $(\Q, +, \cdot)$ containing $\Z$
    and $1/q$; as a set, it consists of all rational numbers
    of the form $n \cdot q^j$, $n, j \in \Z$.
  }
and
$m\in\Z$. Here, one can think of $m$ as a \emph{cursor} pointing to a
position in the $q$-ary expansion
$a_{n}q^{n}+\cdots+a_{-n}q^{-n}$. Then the action of the generators of $\BS(1,q)$
is as follows.
Multiplication by $t$ or $t^{-1}$ moves
the cursor to the left or the right, respectively. Multiplication by $a$
adds $q^m$; likewise, multiplication by $a^{-1}$ subtracts $q^m$. Thus,
from an automata-theoretic perspective, one can view the rational
subset membership problem as the reachability problem for an extended
version of one-counter automata. Instead of storing a natural number,
such an automaton stores a number $r \in \Z[\frac{1}{q}]$. Moreover, instead of
instructions ``increment by 1'' and ``decrement by 1'', it has an
additional $\Z$-counter $m$ that determines the value to be
added in the next update. Then, performing ``increment'' on $r$ will add $q^m$ and
``decrement'' on $r$ will subtract $q^m$. The $\Z$-counter $m$ supports
the classical ``increment'' and ``decrement'' instructions.

\subparagraph*{Contribution} Our \emph{first main contribution} is to show is
that for each group $\BS(1,q)$, the rational subset membership problem is
decidable and $\PSPACE$-complete. To this end, we show that each rational subset can be
represented by a regular language of finite words that encode elements of
$\BS(1,q)$ in the natural way: For $(r,m)$ as above, we encode each digit $a_i$
by a letter; and we decorate the digits at position $0$ and at position
$m$.  We call this encoding the \emph{pointed expansion} (PE) of \((r,m)\).
This leads to a natural notion of subsets of $\BS(1,q)$, which we call
\emph{PE-regular}. We regard the introduction of this notion as the \emph{second
  main contribution} of this work.

The class of PE-regular subsets of $\BS(1,q)$ has several properties that
make them a promising tool for decision procedures for $\BS(1,q)$:
First, our proof shows that it effectively includes the large class of
rational subsets, in particular any finitely generated submonoid.
Second, they form an effective Boolean algebra. Third, due to them
being regular languages of words, they inherit many algorithmic tools
from the setting of free monoids.
We apply these properties to obtain \emph{three applications of our main results}.
\begin{enumerate}
\item Membership in each fixed rational subset can be decided in
  logarithmic space.

\item We show that it is decidable whether a given PE-regular subset
  (and thus a given rational subset) is recognizable. Recognizability
  of rational subsets is rarely known to be decidable for groups: The
  only examples known to the authors are free groups, for which
  decidability was shown by
  S\'{e}nizergues~\cite{DBLP:journals/acta/Senizergues96} (and
  simplified by Silva~\cite{DBLP:journals/ita/Silva04}) and free
  abelian groups (this follows from \cite[Theorem
  3.1]{GinsburgSpanier1966a}). Since (i)~finitely generated subgroups
  are rational subsets and (ii)~a subgroup of any group $G$ is
  recognizable if and only if it has finite index in $G$, our result
  implies that it is decidable whether a given finitely generated
  subgroup of $\BS(1,q)$ has finite index. Studying decidability of
  this finite index problem in groups was recently proposed by
  Kapovich~\cite[Section~4.3]{DBLP:journals/dagstuhl-reports/DiekertKLM19}.
\item Our results imply that emptiness of Boolean combinations (hence
  inclusion, equality, etc.) of rational subsets is decidable. (We
  also show that the rational subsets of $\BS(1,q)$ are not closed
  under intersection.)  This is a strong decidability property that
  already fails for groups as simple as $F_2\times \Z$ (this follows
  from \cite[Theorem~6.3]{Ibarra1978}), where $F_2$ is the free group
  over two generators, and hence for $\GL(2,\Z)\times\Z^k$, $k\ge 1$.
\end{enumerate}
Finally, we remark that
since $\BS(1,q)$ is isomorphic to the group of all matrices
$\begin{psmallmatrix} q^m & r \\ 0 & 1\end{psmallmatrix}$ for $m\in\Z$
and $r\in\Z[\tfrac{1}{q}]$, our results can be interpreted as solving
the rational subset membership problem for this subgroup of $\GL(2,\Q)$.

\subparagraph{Related work} It is well-known that membership in a
given finitely generated subgroup, called the \emph{generalized word
  problem} of $\BS(1,q)$, is decidable. This is due to a general result of
Romanovski\u{\i}, who showed in \cite{Romanovskii1974a} and
\cite{Romanovskii1980a} that solvable groups of derived length two
have a decidable generalized word problem (it is an easy exercise to
show that $\BS(1,q)$ is solvable of derived length two for each
$q\in\N$).

Another restricted version of rational subset membership is the
\emph{knapsack problem}, which was introduced by Myasnikov, Nikolaev,
and Ushakov~\cite{MyNiUs14}. Here, one is given group elements
$g_1,\ldots,g_k,g$ and is asked whether there exist
$x_1,\ldots,x_k\in\N$ with $g_1^{x_1}\cdots g_k^{x_k}=g$. A recent
paper on the knapsack problem in Baumslag-Solitar groups by Dudkin and
Treyer~\cite{DudkinTreyer2018} left open whether the knapsack problem
is decidable in $\BS(1,q)$ for $q\ge 2$. This was settled very
recently in \cite{LohreyZetzsche2020a}, where one expresses
solvability of $g_1^{x_1}\cdots g_k^{x_k}=g$ in a variant of B\"{u}chi
arithmetic. A slight extension of that proof yields a regular
language as above for the set
$S=\{g_1^{x_1}\cdots g_k^{x_k}\mid x_1,\ldots,x_k\in\N\}$. Note that
each element $g_i$ moves the cursor either to the left (i.e.\
increases $m$), to the right (i.e.\ decreases $m$), or not at all.
Thus, in a product $g_1^{x_1}\cdots g_k^{x_k}$, the cursor direction
is reversed at most $k-1$ times. The challenge of our translation from
rational subsets to PE-regular subsets is to capture products where the
cursor changes direction an unbounded number of times.

Finally, closely related to rational subsets, there is another
approach to group-theoretic problems via automata: One can represent
finitely generated subgroups of free groups using \emph{Stallings
  graphs}
. Due to the special setting of free groups, they behave in many ways
similar to automata over words and are thus useful for decision
procedures~\cite{KAPOVICH2002608}. Stallings graphs have recently been
extended to semidirect products of free groups and free abelian groups
by Delgado~\cite{Delgado2017a}. However, this does not include
products $\Z[\tfrac{1}{q}]\rtimes\Z$ and is restricted to subgroups.




\section{Basic notions}\label{sec:prelim}
\subparagraph{Automata, rational subsets, and regular languages} Since
we work with automata over finite words and over groups, we define
automata over a general monoid $M$. A subset $S\subseteq M$ is
\emph{recognizable} if there is a finite monoid $F$ and a morphism
$\varphi\colon M\to F$ such that $S=\varphi^{-1}(\varphi(S))$. If $M$
is a group, one can equivalently require $F$ to be a finite group.

For a subset $S\subseteq M$, we
write $\langle S\rangle$ or $S^*$ for the submonoid \emph{generated by
  $S$}, i.e. the set of elements that can be written as a (possibly
empty) product of elements of $S$. In particular, the neutral element
$1\in M$ always belongs to $\langle S\rangle=S^*$. A \emph{generating
  set} is a subset $\Sigma\subseteq M$ such that
\(M = \langle\Sigma\rangle\). We say that $M$ is \emph{finitely
  generated (f.g.)}  if it has a finite generating set. Suppose $M$ is
finitely generated and fix a finite generating set $\Sigma$. An
\emph{automaton over $M$} is a tuple $\calA=(Q,\Sigma,E,q_0,q_f)$,
where $Q$ is a finite set of \emph{states},
$E\subseteq Q\times \Sigma\times Q$ is a finite set of \emph{edges},
$q_0\in Q$ is its \emph{initial state}, and $q_f\in Q$ is its
\emph{final state}. A \emph{run (in $\calA$)} is a sequence
$\rho=(p_0,a_1,p_1)\cdots (p_{m-1},a_m,p_m)$, where
$(p_{i-1},a_i,p_i)\in E$ for $i\in[1,m]$.  It is \emph{accepting} if
$p_0=q_0$ and $p_m=q_f$.  By $[\rho]$, we denote the \emph{production}
of \(\rho\), that is, the element $a_1\cdots a_m\in M$.  Two runs are
\emph{equivalent} if they start in the same state, end in the same
state, and have the same production.  For a set of runs $P$, we denote
$[P] = \{[\rho] \mid \rho \in P\}$.

The subset \emph{accepted by $\calA$} is
$\lang{\calA}=\{[\rho]\mid \text{$\rho$ is an accepting run in
  $\calA$}\}$. A subset $R\subseteq M$ is called \emph{rational} if it
is accepted by some automaton over $M$. It is a standard fact that the
family of rational subsets of $M$ does not depend on the chosen
generating set $\Sigma$. Rational subsets of a free monoid $\Gamma^*$
for some alphabet $\Gamma$ are also called \emph{regular languages}.
If
$M=\Gamma^*\times\Delta^*$ for alphabets
$\Gamma,\Delta$, then rational subsets of $M$ are also
called \emph{rational transductions}. If
$T\subseteq\Gamma^*\times\Delta^*$ and $L\subseteq\Gamma^*$,
then we set
$TL=\{v\in\Delta^* \mid \exists u\in L\colon (u,v)\in T\}$. It is
well-known that if $L\subseteq\Gamma^*$ is regular and
$T\subseteq\Gamma^*\times\Delta^*$ is rational, then $TL$ is
regular as well~\cite{Berstel1979}.

\subparagraph{Baumslag-Solitar groups} The \emph{Baumslag-Solitar
  groups} are the groups $\BS(p,q)$ for $p,q\in\N$, where
$\BS(p,q)=\langle a,t\mid ta^pt^{-1}=a^q\rangle$. They were introduced
in~1962 by Baumslag and Solitar~\cite{baumslag1962some} to provide an
example of a non-Hopfian group with two generators and one defining
relation. In this paper, we focus on the case $p=1$. In this case,
there is a well-known isomorphism
$\BS(1,q)\cong \Z[\tfrac{1}{q}]\rtimes\Z$ and we will identify the two
groups. Here, $\Z[\tfrac{1}{q}]$ is the additive group of number $nq^i$ with $n,i\in\Z$, and $\rtimes$ denotes
semidirect product. Building this semidirect product requires us to specify an
automorphism $\varphi_m$ of $\Z[\tfrac{1}{q}]$ for each $m\in\Z$,
which is given by $\varphi_m(nq^i)=q^m\cdot n q^i$.

For readers not familiar with semidirect products, we give an
alternative self-contained definition of $\Z[\tfrac{1}{q}]\rtimes\Z$.
The elements of this group
are pairs $(r,m)$, where $r\in\Z[\tfrac{1}{q}]$ and $m\in\Z$. The
multiplication is defined as
\[ (r,m)(r',m')=(r+q^{m}\cdot r',m+m'). \] We think of an element
$(r,m)$ as representing a number $r$ in $\Z[\tfrac{1}{q}]$ together
with a cursor $m$ to a position in the $q$-ary expansion of
$r$. Multiplying an element $(r,m)$ by the pair $(1,0)$ from the right
means adding $1$ at the position in \(r\) given by $m$, hence adding
$q^m$ to \(r\) and leaving the cursor unchanged: we have
$(r,m)(1,0)=(r+q^m,m)$. Multiplying by $(0,1)$ moves the cursor one
position to the left: $(r,m)(0,1)=(r,m+1)$.  It is easy to see that
$\Z[\tfrac{1}{q}]\rtimes\Z$ is generated by the set
$\{(1,0), (-1,0), (0,1), (0,-1)\}$.  The isomorphism
$\BS(1,q)\xrightarrow{\sim}\Z[\tfrac{1}{q}]\rtimes\Z$ mentioned above
maps $a$ to $(1,0)$ and $t$ to $(0,1)$. Since we identify
$\BS(1,q)$ and $\Z[\tfrac{1}{q}]\rtimes\Z$, we will have $a=(1,0)$ and $t=(0,1)$.  In particular, $a$ can be thought of as ``add''/``increment'', and $t$ as ``move''. We regard elements of the subgroup
$\Z[\tfrac{1}{q}] \times \{0\}$ of $\BS(1,q)$ as elements of
$\Z[\tfrac{1}{q}]$, i.e., integers or rational fractions with
denominator $q^i$, $i \ge 1$.

\subparagraph{Rational subset membership} Unless specified otherwise,
automata over $\BS(1,q)$ will use the generating set
$\Sigma=\{a,a^{-1},t,t^{-1}\}=\{(1,0), (-1,0), (0,1), (0,-1)\}$.  The
central decision problem of this work is the \emph{rational subset
  membership problem for $\BS(1,q)$}:
\begin{description}
\item[Given] An automaton $\calA$ over $\BS(1,q)$ and an element $g\in\BS(1,q)$ as a word over $\Sigma$.
\item[Question] Does $g$ belong to $\lang{\calA}$?
\end{description}

\subparagraph{Automata over BS(1,\,\emph{q})} In the following
definitions, let \(\calA = (Q,\Sigma,E,q_0,q_f)\) be an automaton over
\(\BS(1, q)\).
For a run $\rho$ of $\calA$, recall that $[\rho]\in \Z[\tfrac{1}{q}]\rtimes\Z$ is the
\emph{production} of $\rho$.  Moreover, if $[\rho]=(r,m)$ with
$r\in\Z[\tfrac{1}{q}]$ and $m\in\Z$, then we define $\pos{\rho}=m$, and call this the
\emph{final position} of \(\rho\).  More generally, the \emph{position} at a
particular point in~$\rho$ is the final position of the corresponding prefix
of~$\rho$.  By $\pmax(\rho)$, we denote the maximal value of $\pos{\pi}$ where
$\pi$ is a prefix of $\rho$.  Analogously, $\pmin(\rho)$ is the minimal value of
$\pos{\pi}$ where $\pi$ is a prefix of $\rho$.  A run $\rho$ is \emph{returning} if
$\pos{\rho}=0$.  It is \emph{returning-left} if in addition $\pmin(\rho)=0$.  Note that
for a returning run $\rho$, we have $[\rho]\in\Z[\tfrac{1}{q}]$ and if $\rho$ is
returning-left, we have $[\rho]\in\Z$.
Let $|\rho|$ be the length of the run $\rho$ as a word over $E$.
We will often write $\rho_i$ assuming $\rho = \rho_1 \rho_2 \dots \rho_\ell$ where
each $\rho_i \in E$ and $\ell = |\rho|$.
A run is a
\emph{cycle}   if it is
returning and starts and ends in the same state.
The \emph{thickness} of a run \(\rho\) is defined as the greatest number of times a
position is seen:
\[\thickness(\rho) = \max_{n \in \Z} |\{i \mid \pos{\rho_1\cdots\rho_i} = n\}|\enspace.\]
We call a run \emph{\(k\)-thin} if its thickness is at most \(k\).

We let \(\runs{\calA}\) (resp.\ \(\retruns{\calA}\), \(\leftruns{\calA}\)) be the set
of all accepting runs (resp.\ accepting returning runs, accepting returning-left
runs) of \(\calA\).  We add \(k\) in subscript to restrict the set to \(k\)-thin runs;
for instance, \(\retruns[k]{\calA}\) is the set of \(k\)-thin returning runs.
Further, we write \(\runs[k][p\to p']{\calA}\) for \(k\)-thin runs that start in
\(p\) and end in \(p'\), and use the similar notations
\(\retruns[k][p\to p']{\calA}\) and \(\leftruns[k][p\to p']{\calA}\).

Seeing \(\{0, \ldots, q-1\}\) as an alphabet, write \(\Phi_q\) for letters from this
alphabet with possibly a \(\bullet\) subscript (e.g., \(\rpoint{0}\)), a
\(\triangleleft\) superscript (e.g., \(\cursor{0}\)), or both (e.g.,
\(\cursor{\rpoint{0}}\)).  For \(v = (r, n) \in \BS(1, q)\), we write
\(\pe(v)\) for its base-\(q\) \emph{pointed expansion} (or just \emph{expansion}) as a word in
\(\pm \Phi_q^*\), where the subscript \(\bullet\) and the superscript
\(\triangleleft\) appear only once, the former representing the radix point, the latter
indicating the value of \(n\).  That is, if \(r = \sum_{i=-k_2}^{k_1} a_i q^i\), with
\(k_1, k_2 \geq 0\), \(\pe(v)\) is the following word:
\[\pm a_{k_1}\cdots a_1 (a_0)_\bullet a_{-1} \cdots a_{-k_2}\enspace,\]%
where \(\triangleleft\) is added to \(a_n\).
%
We tacitly assume a uniqueness condition: the
expansion \(\pe(v)\) of an element \(v \in \BS(1, q)\) is the shortest that
abides by the definition.
Expansions are read by automata in the left to right direction, i.e.,
from most to least significant digit.

\begin{definition}
\label{def:regular}
We say that a subset of \(R \subseteq \BS(1, q)\) is \emph{PE-regular}, where PE stands
for pointed expansion, if the word language \(\{\pe(v) \mid v \in R\}\) is regular.
\end{definition}

We remark that basic properties of regular languages support the
transformation of noncanonical expansions of elements \(\BS(1, q)\),
i.e., those with zeros on the left or right, into canonical ones,
\(\pe(v)\).  Finally, recall that we identify each
$r\in\Z[\tfrac{1}{q}]$ with
$(r,0)\in\Z[\tfrac{1}{q}]\rtimes\Z$. Hence, for $r\in\Z[\frac{1}{q}]$,
$\pe(r)$ is the $q$-ary expansion of $r$ (with $\triangleleft$ as an
additional decoration at the radix point).


\section{Main results}\label{results}
\newsavebox{\figone}
\newsavebox{\figtwo}

\tikzset{gadget/.style={->,>=stealth,initial text=,minimum size=7pt,auto,on grid,scale=1,inner sep=1pt,node distance=1cm}}
\tikzset{every state/.style={minimum size=15pt,inner sep=1pt,fill=black!10,draw=black!70,thick}}

\begin{lrbox}{\figone}
    \begin{tikzpicture}[gadget, node distance=2cm]
      \node[state,initial] (p1) {$p_1$};
      \node[state, right=of p1] (p2) {$p_2$};
      \node[state,accepting by arrow, right=of p2] (p3) {$p_3$};
      \path (p1) edge [loop above] node {$t^{-2}$} (p1);
      \path (p2) edge [loop above] node {$t^2$} (p2);
      \path (p2) edge [loop below] node {$tat$} (p2);
      \path (p3) edge [loop above] node {$t^{-2}$} (p3);
      \path (p1) edge node {$1$} (p2);
      \path (p2) edge node {$1$} (p3);
    \end{tikzpicture}
\end{lrbox}
\begin{lrbox}{\figtwo}
    \newcommand{\radius}{1.1cm}
    \begin{tikzpicture}[gadget, node distance=2cm]

      \node[state,initial,accepting by arrow] (p0) at (-90:\radius) {$p_0$};
      \node[state] (p1) at (-180:\radius) {$p_1$};
      \node[state] (p2) at (90:\radius) {$p_2$};
      \node[state] (p3) at (0:\radius) {$p_3$};

      \path (p0) edge [loop above] node {$t^{-1}$} (p0);
      \path (p0) edge node {$a$} (p1);
      \path (p1) edge node {$t$} (p2);
      \path (p2) edge node {$a$} (p3);
      \path (p3) edge node {$t$} (p0);

    \end{tikzpicture}
\end{lrbox}

\begin{figure}[t]
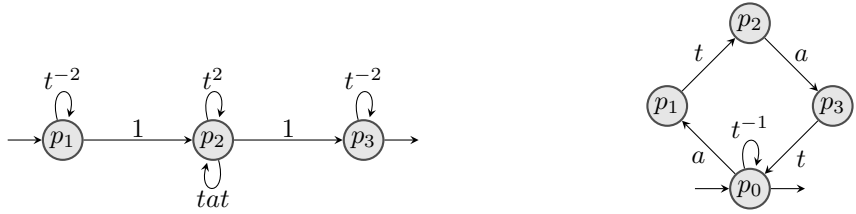


  \subcaptionbox{Automaton over $\BS(1,q)$ from \cref{non-closure-intersection}\label{automaton-intersection}.}[0.5\textwidth]{\usebox{\figone}}
  \subcaptionbox{Automaton over $\BS(1,2)$ from
    \Cref{ex:three}.\label{automaton-three}}[0.5\textwidth]{%
    \usebox{\figtwo}}
  \caption{Example automata over $\BS(1,q)$.}
\end{figure}

In this section, we list our main contributions, their proofs being deferred to
later sections.  Our first main result is that one can translate rational
subsets into PE-regular subsets.
\begin{theorem}\label{main-effective-regularity}
  Every rational subset of $\BS(1,q)$ is effectively PE-regular.
\end{theorem}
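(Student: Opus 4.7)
The plan is to construct, given an automaton $\calA$ over $\BS(1,q)$, a nondeterministic finite automaton $\calB$ over the PE alphabet with $\lang{\calB} = \{\pe(v) \mid v \in \lang{\calA}\}$; $\calB$ reads a pointed expansion from the most significant digit to the least, verifying that the encoded element is the production of some accepting run of $\calA$. The construction rests on a structural decomposition of accepting runs by their cursor trajectory: for each position $n \in \Z$ visited by the cursor of an accepting run $\rho$, the sub-runs between successive visits to $n$ are \emph{excursions} that stay strictly above or strictly below $n$; after a cursor-frame shift, each upward excursion corresponds to a returning-left run of $\calA$, and each downward excursion to a returning-right run. This gives $\rho$ a nested, self-similar shape that matches the digit-by-digit structure of the PE alphabet.

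The technical heart of the proof would be a lemma of the following form: for each pair of states $p, p'$ of $\calA$, the set $[\leftruns[][p\to p']{\calA}] \subseteq \Z$ is PE-regular, and symmetrically the set of returning-right productions (a subset of $\Z[\tfrac{1}{q}]$) is PE-regular. I would prove this by an inductive, saturation-style argument exploiting the self-similarity. A returning-left run from $p$ to $p'$ decomposes at its visits to the initial cursor position into local $a^{\pm 1}$ contributions (summing to an integer) interleaved with upward excursions, each of which is itself a returning-left run and contributes $q$ times an integer from some previously analyzed set $[\leftruns[][p_i \to p'_i]{\calA}]$. This exactly mirrors reading the $q$-ary expansion of the production digit by digit from the top, and combined with the finiteness of the state set it yields a finite PE-automaton for each pair $(p,p')$. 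The automaton $\calB$ is then assembled from these building blocks by tracking, at each PE position being read, $\calA$'s state on the main trajectory together with a finite-state summary of the excursions departing above and below, with additional care around the radix marker $\bullet$ and cursor marker $\triangleleft$.

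The main obstacle, as the paper flags, is the unbounded number of cursor reversals in a general run of $\calA$: unlike the knapsack setting where reversals are bounded by the number of generators, here a single Kleene iteration can produce arbitrarily many reversals, so one cannot simply restrict to $k$-thin runs for a constant $k$. The recursive decomposition above tames this by absorbing each level of the nested excursion structure into a single PE digit; a pumping argument on thickness, using the $k$-thin machinery from \cref{sec:prelim}, then shows that only finitely many excursion "types" per level need to be tracked, keeping the size of $\calB$ bounded in terms of the parameters of $\calA$.
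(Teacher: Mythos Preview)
Your recursive decomposition by cursor excursions captures the right structural self-similarity, but the argument has a genuine gap at the step where you assert that ``combined with the finiteness of the state set it yields a finite PE-automaton for each pair $(p,p')$.''

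The difficulty is that the number of excursions launched from a single position is unbounded: a run can revisit position~$n$ arbitrarily many times, interleaving an unbounded number of upward excursions with local $a^{\pm 1}$ moves. Consequently the crossing structure your PE-automaton must track at each digit---the sequence of (state, direction) pairs at that position---has unbounded length. Recording only the \emph{set} of excursion state-pairs is finite, but it loses the multiplicities, and those multiplicities are exactly what determine the digit and the carry. Your ``saturation-style'' fixed-point formulation is circular (the sets $[\leftruns[][p\to p']{\calA}]$ appear on both sides), and you give no argument that the least solution is $q$-automatic. Your final paragraph invokes ``a pumping argument on thickness'' to bound things, but restricting to $k$-thin runs discards productions; the point is not that thick runs pump down to thin runs with the \emph{same} production, but that the excised pieces contribute additional values that must still be accounted for.

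This is exactly where the paper's route diverges. The paper shows (\cref{cor:decomp}) that every run is equivalent to a $k$-thin skeleton into which thin returning-left \emph{cycles} are inserted at the lowest-position occurrence of each state. The thin skeleton is handled by a Shepherdson-style crossing-sequence automaton (\cref{lem:thinreg}), much as you envisage. The excised cycles are treated \emph{separately}: being returning-left, each produces an integer, so iterating them generates a submonoid of~$\Z$, and every such submonoid is eventually an arithmetic progression (Frobenius/gcd, \cref{lem:star-left-runs}), hence PE-regular. The paper's intermezzo stresses that this is the crux: PE-regular sets are \emph{not} closed under iteration in general, and $\Z[\tfrac{1}{q}]$ has uncountably many submonoids---so your symmetric claim for returning-right productions, which live in $\Z[\tfrac{1}{q}]$, cannot be dispatched by the same mechanism. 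Your key lemma is in fact true (it is a special case of the theorem), but your proposed proof of it is where the real work hides; supplying that work seems to require, in some form, the thin-skeleton-plus-$\Z$-cycles decomposition that the paper provides.
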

This will be shown in \cref{rational-to-regular}.
Since membership is decidable for regular languages and
given $g\in\BS(1,q)$ as a word over $\{a,a^{-1},t,t^{-1}\}$, one can
compute $\pe(g)$, \cref{main-effective-regularity} implies that
rational subset membership is decidable.  Our next main result is that
the problem is $\PSPACE$-complete.
\begin{theorem}\label{main-complexity}
  The rational subset membership problem for $\BS(1,q)$ is
  $\PSPACE$-complete.
\end{theorem}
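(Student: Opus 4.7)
The plan is to derive \PSPACE-membership from \cref{main-effective-regularity} combined with a size analysis, and \PSPACE-hardness from a reduction simulating polynomial-space Turing machines in $\BS(1,q)$.

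For the upper bound, I would argue that the construction behind \cref{main-effective-regularity} produces, from an input automaton $\calA$, an NFA $\calA'$ over decorated base-$q$ digits whose size is at most singly exponential in $|\calA|$ and whose transitions are locally computable in polynomial space from the description of $\calA$. Given $g \in \BS(1,q)$ as a word $w \in \Sigma^*$, the pointed expansion $\pe(g)$ has polynomial length---the cursor lies in $[-|w|,|w|]$ and the $q$-ary expansion of $r \in \Z[\tfrac{1}{q}]$ has $O(|w|)$ non-trivial digits---and can be computed in polynomial space. Membership of $\pe(g)$ in $\lang{\calA'}$ then reduces to guessing an accepting run of $\calA'$ one state at a time, feasible in non-deterministic polynomial space in $|\calA|+|w|$ and hence in \PSPACE by Savitch, without ever materialising $\calA'$ in full.

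For the lower bound, I would reduce from acceptance of deterministic polynomial-space Turing machines. Given such an $M$ with tape bound $n$ polynomial in $|w|$ and tape alphabet embedded into $\{0,\dots,q-1\}$, I would encode each configuration as an element $(r,m) \in \BS(1,q)$, where the digits of $r$ at positions $0,\dots,n-1$ store the tape, $m$ stores the head, and the TM state is carried in the finite control of $\calA$. A TM transition becomes a gadget that non-deterministically guesses the current symbol $x$, modifies $r$ by $(y-x)\,q^m$ via at most $q$ applications of $a^{\pm 1}$, and then applies $t^{\pm 1}$. The target element $g$ is the (unique) accepting configuration of $M$ on $w$.

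The principal obstacle is ruling out \emph{cheating runs} whose guesses deviate from the actual value of $r$ yet still produce $g$. I would handle this by constraining $\calA$ to a single accepting state (forcing termination in the TM's accept state) and by ensuring, through careful choice of the digits used for TM symbols and of the gadget structure, that any guess inconsistent with the current value of $r$ produces a digit mismatch in the final $r$ that cannot be compensated by subsequent operations of the deterministic $M$. Since the simulation uses only polynomial space, all positions and gadgets stay polynomial in $|M|+|w|$, and the reduction is computable in logarithmic space, yielding \PSPACE-hardness.
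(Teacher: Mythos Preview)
Your upper-bound plan is the same high-level strategy as the paper's (what the paper calls a \emph{succinct automaton}: an exponential-size NFA whose transition relation is computable in polynomial space, so membership is in \PSPACE). But you treat as routine the one step that is genuinely non-trivial. Tracing the construction of \cref{main-effective-regularity}, the only place where ``transitions are locally computable in polynomial space'' is not immediate is the iteration step, \cref{lem:star-left-runs}: there one must know $\gcd(S)$ and a bound $B\ge F(S)$ for $S=[\leftruns[k][p\to p]{\calA}]$, and the resulting automaton stores a remainder modulo $\gcd(S)$ and enumerates sums up to~$B$. Nothing a~priori prevents $\gcd(S)$ or $F(S)$ from being doubly exponential in~$|\calA|$, in which case your on-the-fly simulation would need super-polynomial space. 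The paper devotes \cref{pspace-gcd-frobenius} (via the ``four-runs'' decomposition, \cref{four-runs}, and \cref{thin-runs-small-non-divisible}) to proving that both quantities are at most singly exponential; this is the heart of the \PSPACE upper bound and is missing from your sketch.

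For the lower bound you take a different route from the paper: you simulate a polynomial-space Turing machine step by step, whereas the paper reduces from intersection non-emptiness of DFAs. Your route runs into exactly the obstacle you name, and your proposed fix does not close it. A wrong guess of the current cell contents adds $(y-x)q^m$ when the true digit is $x'\ne x$; this introduces carries that can propagate arbitrarily and interact with the (exponentially many) later updates of a long computation. The claim that such an error ``cannot be compensated by subsequent operations of the deterministic $M$'' is precisely what would need proof, and ``careful choice of the digits'' is not an argument---over exponentially many steps, error cancellation is not ruled out by any local encoding you have described. The paper sidesteps this entirely: it writes one word per DFA on interleaved tracks and then subtracts, in a single left-to-right pass, a number whose base-$q$ expansion has the same digit repeated across all tracks; the result is zero iff all tracks carried the same (aligned) word. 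No step ever needs to guess the current contents of a cell, so the cheating issue never arises.
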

This is shown in \cref{complexity}.
We shall also conclude that membership to each fixed rational subset
is decidable in logspace.
\begin{theorem}\label{main-complexity-fixed}
  For each fixed rational subset of $\BS(1,q)$, membership
  is decidable in logarithmic space.
\end{theorem}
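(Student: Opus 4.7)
The plan is to invoke \cref{main-effective-regularity}: the fixed rational subset \(R \subseteq \BS(1,q)\) is effectively PE-regular, so we may fix, once and for all, a finite DFA \(\calA\) over the alphabet \(\pm\Phi_q\) such that \(\lang{\calA} = \{\pe(v) \mid v \in R\}\).  Because \(R\) is fixed, \(\calA\) has a constant number of states, and simulating one step of \(\calA\) takes \(O(1)\) space.  Membership of \(g \in R\), for an input \(g\) given as a word \(w \in \{a,a^{-1},t,t^{-1}\}^*\) of length \(n\), then reduces to generating the word \(\pe(g)\) on the fly and feeding it to \(\calA\); the algorithm accepts iff \(\calA\) ends in a final state.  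The task is therefore to produce, in logarithmic space, each symbol of \(\pe(g)\) from left (most significant digit) to right (least significant digit), with the \(\bullet\) and \(\triangleleft\) annotations placed at the correct positions.

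Write \(g = (r,m)\) with \(r \in \Z[\tfrac{1}{q}]\) and \(m \in \Z\).  First, \(m = |w|_t - |w|_{t^{-1}}\) can be computed with two counters, which fit in logspace since \(|m| \le n\).  Writing out how \(w\) acts on \((0,0)\) from the right, one obtains \(r = \sum_{j} \epsilon_j q^{m_{j-1}}\), summed over those positions \(j\) of \(w\) with \(w_j \in \{a,a^{-1}\}\); here \(\epsilon_j = \pm 1\) and \(m_{j-1}\) is the cursor just before position \(j\), which, like \(m\), is the running \(t\)-versus-\(t^{-1}\) count over the prefix \(w_1\cdots w_{j-1}\) and hence logspace-computable.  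In particular \(|m_{j-1}| \le n\), so scaling gives \(\tilde r := q^n r = \sum_j \epsilon_j q^{n + m_{j-1}}\), an \emph{integer} written as a sum of at most \(n\) signed powers of \(q\), each of bit length \(O(n)\).  The pointed expansion \(\pe(g)\) is now recoverable from \(\tilde r\) and \(m\): the sign is \(\operatorname{sign}(\tilde r)\), the \(q\)-ary digit of \(r\) at position \(k\) is the digit of \(|\tilde r|\) at position \(k+n\) (for \(-n \le k \le n\)), the \(\bullet\) annotates the digit at \(k=0\), and \(\triangleleft\) annotates the digit at \(k=m\).

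The one nontrivial step — and the main obstacle — is computing the \(q\)-ary digits and the sign of a sum of polynomially many integers of polynomial bit length in logarithmic space.  This is precisely the iterated integer addition problem, which is known to lie in \(\mathsf{TC}^0 \subseteq \mathsf{L}\) by a classical result of Hesse, Allender, and Barrington; concretely, one can compute the bits of the sum by a uniform family of logspace-constructible constant-depth threshold circuits, or, working more directly, use the standard carry-look-ahead / three-for-two reduction that underlies the \(\mathsf{TC}^0\) algorithm.  Putting these pieces together, we iterate \(k\) from the highest relevant position \(k_1 = \max(0,m,\lfloor\log_q|r|\rfloor)\) down to the lowest relevant position \(k_2 = \min(0,m,\text{lowest nonzero position of }r)\), query the appropriate digit of \(\tilde r\), add any \(\bullet\) and \(\triangleleft\) annotations as required, and feed the annotated symbol to \(\calA\); the bounds \(k_1, k_2\) themselves fit in \(O(\log n)\) bits and are logspace-computable from \(\tilde r\) and \(m\).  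The total workspace used is \(O(\log n)\) for the position counters plus \(O(1)\) for the state of \(\calA\) plus the space of the iterated-addition subroutine, all of which is logarithmic in \(n\).
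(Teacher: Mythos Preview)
Your proposal is correct and follows the same overall strategy as the paper: fix a DFA for $\pe(R)$ (constant size, since $R$ is fixed) and compute $\pe(g)$ in logspace to feed into it. The difference lies in how $\pe(g)$ is obtained. The paper simply invokes a result of Elder, Elston, and Ostheimer, who show that a normal form for elements of $\BS(1,q)$ can be computed in logspace; from that normal form, $\pe(g)$ is read off directly. You instead give a self-contained reduction: express $r$ as a signed sum of at most $n$ powers of $q$ with logspace-computable exponents, scale to an integer $\tilde r$, and appeal to the fact that iterated integer addition lies in $\mathsf{TC}^0\subseteq\mathsf{L}$ to extract digits on the fly.

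Both routes work. Yours is more elementary in that it avoids the group-specific normal-form machinery and relies only on a standard circuit-complexity fact; the paper's is shorter because it outsources the work to a citation. One small remark: iterated addition in $\mathsf{TC}^0$ predates Hesse--Allender--Barrington (whose contribution is division); the result you need goes back to Chandra--Stockmeyer--Vishkin and is folklore at this point. Also be mindful that $\pe(g)$ is by convention the \emph{shortest} valid expansion, so your loop bounds $k_1,k_2$ must account for trimming leading and trailing zeros while still covering positions $0$ and $m$; you gesture at this, and it is indeed routine in logspace once the digits of $\tilde r$ are available.
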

The proof can also be found in \cref{complexity}.
Note that, in particular, membership to each fixed subgroup of
$\BS(1,q)$ is decidable in logarithmic space.
Another application of \cref{main-effective-regularity} is that one
can decide whether a given rational subset of $\BS(1,q)$ is
recognizable.
\begin{theorem}\label{main-recognizability}
  Given a PE-regular subset $R$ of $\BS(1,q)$, it is
  decidable whether $R$ is recognizable.
\end{theorem}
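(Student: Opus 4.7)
The plan is to use the classical characterization of recognizable subsets of a group: $R \subseteq \BS(1,q)$ is recognizable iff $R = \pi^{-1}(\pi(R))$ for some surjective morphism $\pi$ onto a finite group. From the presentation $\langle a, t \mid tat^{-1} = a^q \rangle$, a short computation shows that every finite quotient of $\BS(1,q)$ factors through a group of the form $F_{n,k} := \Z/n \rtimes_q \Z/k$, where $n, k \geq 1$ are integers satisfying $\gcd(n, q) = 1$ and $q^k \equiv 1 \pmod{n}$ (the former coming from the fact that conjugation by $\bar t$ preserves orders, the latter from $\bar a = \bar t^k \bar a \bar t^{-k} = \bar a^{q^k}$). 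The canonical surjection $\pi_{n,k} \colon \BS(1,q) \to F_{n,k}$ sends $(r, m) \mapsto (r \bmod n, m \bmod k)$, which is well-defined thanks to $q^k \equiv 1 \pmod{n}$. Hence $R$ is recognizable iff, for some such pair $(n, k)$, $R$ is a union of cosets of $N_{n,k} := \ker \pi_{n,k}$.

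For each fixed $(n, k)$, I would first verify that the fibers of $\pi_{n,k}$ are PE-regular: the condition $r \equiv \bar{r} \pmod{n}$ is captured by a finite automaton that reads the digits of $\pe(r, m)$ and maintains a running sum modulo $n$ (the weights $q^i \bmod n$ are $k$-periodic, hence computable by a finite-state device), while the condition $m \equiv \bar{m} \pmod{k}$ is trivially regular on the position of $\triangleleft$. Since PE-regular subsets form an effective Boolean algebra, for each of the $nk$ fibers $F$ one can test whether $F \cap R = \emptyset$ or $F \setminus R = \emptyset$; then $R$ is a union of cosets of $N_{n,k}$ iff one of these holds for every $F$. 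Thus, recognizability of $R$ by $F_{n,k}$ is decidable for every fixed $(n, k)$.

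To turn this into an algorithm, I would establish a computable bound $N_0$, in terms of the DFA $\mathcal M$ for $\pe(R)$, such that whenever $R$ is recognizable at all, $R$ is recognized by some $F_{n,k}$ with $n, k \leq N_0$. The idea is to analyze the right stabilizer $H_R := \{g \in \BS(1,q) : Rg = R\}$: if $R$ is recognizable, then $H_R$ has finite index, so $H_R \cap (\Z[\tfrac{1}{q}] \times \{0\})$ equals $n_0 \Z[\tfrac{1}{q}] \times \{0\}$ for some $n_0$ coprime to $q$, and the image of $H_R$ in $\BS(1,q)/(\Z[\tfrac{1}{q}] \times \{0\}) \cong \Z$ is $k_0 \Z$ for some $k_0 \geq 1$; this minimal pair suffices to recognize $R$, and one can bound it via a pigeonhole argument on the PE-regular sets obtained by iteratively applying the rational transductions for right-multiplication by $a^n$ and $t^k$. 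Given $N_0$, the algorithm enumerates all valid pairs $(n, k)$ up to $N_0$ and applies the decision procedure of the previous paragraph, answering Yes iff some pair succeeds. The principal obstacle is precisely this last step: quantitatively relating the word-level syntactic structure of $\pe(R)$ to the algebraic structure of a minimal recognizing quotient, so that the a priori infinite family of candidate quotients $F_{n,k}$ can be reduced to an effectively enumerable finite subfamily.
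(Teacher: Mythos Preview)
Your framework is sound up to the point you yourself flag: the computable bound $N_0$ in step~3 is not established, and the sketch you give does not close the gap. The pigeonhole you invoke would need a finite ambient space of candidates for the sets $R g$, but applying the rational transduction for right-multiplication by $a$ (which adds $q^m$ at the cursor and propagates carries) can increase the state count of the recognizing automaton, so there is no a~priori bound on the number of distinct automata arising among $R, Ra, Ra^2,\ldots$ purely from the size of the DFA for $\pe(R)$. Without such a bound you cannot stop the search, and the algorithm as described does not terminate on non-recognizable inputs.

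The paper sidesteps this difficulty entirely. It first proves a one-parameter characterization: $R$ is recognizable iff it is \emph{$k$-periodic} for some $k\ge 1$, meaning $R$ is invariant under right-multiplication by $(0,k)$ and by every $(q^\ell-q^{\ell+k},0)$. Crucially, instead of searching for a witnessing~$k$ up to some bound, the paper computes the set
\[
N=\{\ltr{a}^k \mid R\text{ is \emph{not} $k$-periodic}\}\subseteq\{\ltr{a}\}^*
\]
in one shot, as an effectively regular unary language. This is done by forming the PE-regular set $D=R(G\setminus R)^{-1}\cup(G\setminus R)R^{-1}$ and observing that $R$ fails $k$-periodicity iff $(0,k)\in D$ or some $(q^\ell-q^{\ell+k},0)\in D$; two simple rational transductions then extract~$N$ from $\pe(D)$. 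Recognizability of $R$ reduces to checking whether $N\ne\{\ltr{a}^k\mid k\ge 1\}$, a decidable question about a regular unary language. Thus no bound on~$k$ is ever needed: regularity of $N$ handles all $k$ simultaneously. This is the idea your argument is missing.
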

This is shown in \cref{recognizability}.
Since a subgroup of any group $H$ is recognizable
if and only if it has finite index in $H$ (see, e.g.~\cite[Prop.~3.2]{BaSi2010}), we obtain:
\begin{corollary}\label{main-finite-index}
  Given a f.g.\ subgroup of $\BS(1,q)$, it is
  decidable whether it has finite index.
\end{corollary}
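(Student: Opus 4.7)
The plan is to string together the main results already stated in the excerpt. Let $H \leq \BS(1,q)$ be a finitely generated subgroup, given by a finite list of generators $g_1, \ldots, g_n \in \BS(1,q)$, each described as a word over $\Sigma = \{a, a^{-1}, t, t^{-1}\}$. The first step is to observe that $H$ is a rational subset of $\BS(1,q)$: indeed, as a subgroup, $H$ equals the submonoid generated by $\{g_1, g_1^{-1}, \ldots, g_n, g_n^{-1}\}$, and so it is accepted by a one-state automaton over $\BS(1,q)$ whose self-loops are labelled by the $g_i$ and $g_i^{-1}$ (viewed as words over $\Sigma$). This construction is entirely effective.

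Next, I would apply \Cref{main-effective-regularity} to this automaton to compute a PE-regular representation of $H$, i.e., a finite automaton accepting the language $\{\pe(v) \mid v \in H\} \subseteq (\pm \Phi_q^*)$. Feeding this automaton into the procedure of \Cref{main-recognizability} yields a yes/no answer to the question of whether $H$ is recognizable as a subset of $\BS(1,q)$.

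Finally, I invoke the standard fact that a subgroup $H$ of an arbitrary group $G$ is recognizable (in the sense of being saturated by the kernel of a morphism into a finite monoid, equivalently a finite group) if and only if $H$ has finite index in $G$ (see \cite[Prop.~3.2]{BaSi2010}, as cited above). Hence the algorithm outputs ``finite index'' exactly when $H$ is recognizable, which finishes the decision procedure.

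There is essentially no obstacle that is not already packaged into the cited theorems: the only thing to check is the effectivity chain (automaton for $H$ $\mapsto$ PE-automaton $\mapsto$ recognizability test), and that a \emph{subgroup} satisfies the ``recognizable iff finite index'' equivalence, which is general group theory. The work has all been done in \Cref{main-effective-regularity,main-recognizability}; the corollary is an immediate specialisation to the case where the rational subset is known a priori to be a subgroup.
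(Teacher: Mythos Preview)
Your proposal is correct and follows exactly the same approach as the paper: the paper derives the corollary in one sentence from \Cref{main-recognizability} (via \Cref{main-effective-regularity}) together with the cited fact that a subgroup is recognizable if and only if it has finite index. You have simply spelled out the effective chain (one-state automaton for $H$, then PE-regular representation, then recognizability test) in slightly more detail than the paper does.
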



\iflongversion
  \section{Closure properties}
  In this section, we show some closure properties of rational and PE-regular
  subsets of \(\BS(1, q)\).  Our goal is twofold: First, give a hands-on
  introduction to these concepts, and second, contrast them by exhibiting
  structural differences between these sets.

  \subsection{The PE-regular subsets of BS(1,\,\emph{q}) form a Boolean algebra}
\else
  We close this section by showing that regular subsets of $\BS(1,q)$
  are robust in terms of closure properties.
\fi
\begin{restatable}{proposition}{mainClosureProperties}\label{main-closure-properties}
  The PE-regular subsets of $\BS(1,q)$ form an effective Boolean
  algebra. Moreover, for PE-regular subsets $R,S\subseteq\BS(1,q)$, the
  sets $RS=\{rs\mid r\in R,~s\in S\}$ and
  $R^{-1}=\{r^{-1}\mid r\in R\}$ are PE-regular as well.
\end{restatable}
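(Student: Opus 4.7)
The strategy is to reduce each closure to the fact, recalled in Section~\ref{sec:prelim}, that a rational transduction applied to a regular language produces a regular language, together with the standard Boolean closures on regular languages.

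First, the set $L_{\pe}=\{\pe(v)\mid v\in \BS(1,q)\}$ is itself regular: it consists of words with an optional leading sign, a block of digits from $\{0,\ldots,q-1\}$ carrying exactly one radix-point decoration $\rpoint{\cdot}$ and exactly one cursor decoration $\cursor{\cdot}$, subject to the local condition that the leftmost and rightmost digits are either nonzero or bear a decoration (this is the shortest-expansion requirement). Since $\pe$ is a bijection between $\BS(1,q)$ and $L_{\pe}$, for any PE-regular $R,S$ we have $\pe(R\cup S)=\pe(R)\cup\pe(S)$, $\pe(R\cap S)=\pe(R)\cap\pe(S)$ and $\pe(\BS(1,q)\setminus R)=L_{\pe}\setminus\pe(R)$. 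Effective Boolean closure of regular languages then yields an effective Boolean algebra of PE-regular subsets.

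For $R^{-1}$, a direct computation gives $(r,m)^{-1}=(-q^{-m}r,-m)$. In terms of digits, the digit word of $-q^{-m}r$ is exactly the digit word of $r$, and in the inverse expansion the roles of the two decorations are swapped: the new radix point sits at the position previously marked by the cursor, and the new cursor sits at the position previously marked by the radix point. Hence $\pe((r,m)^{-1})$ is obtained from $\pe(r,m)$ by flipping the sign and swapping $\rpoint{\cdot}\leftrightarrow\cursor{\cdot}$, followed by trimming any leading or trailing zeros needed to restore canonical form. Both steps are rational transductions, the latter being the normalization mentioned at the end of Section~\ref{sec:prelim}, so $\pe(R^{-1})$ is regular whenever $\pe(R)$ is.

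For the product, I would build a rational transduction $T$ that, given $\pe(r,m)\,\#\,\pe(r',m')$ with a fresh separator $\#$, outputs $\pe((r,m)(r',m'))$. From $(r,m)(r',m')=(r+q^m r',\,m+m')$, the new cursor at position $m+m'$ coincides with the position of $r'$'s cursor in the shifted summand $q^m r'$, so $T$ aligns the cursor $\cursor{\cdot}$ of the first input with the radix point $\rpoint{\cdot}$ of the second input, treats out-of-range positions as implicit zeros, performs base-$q$ addition digit by digit with the appropriate sign handling, and produces a word whose radix point is inherited from the first input and whose cursor is inherited from the second input, after normalization. Applying $T$ to the regular language $\pe(R)\cdot\{\#\}\cdot\pe(S)$ yields $\pe(RS)$. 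The hardest ingredient is the base-$q$ addition-with-carry inside this transducer: carries propagate from the least to the most significant digit, while automata read left to right. This is handled by the standard device of either reversing the inputs (using closure of regular languages under reversal) or letting the transducer nondeterministically guess the carry entering each position and verify it locally; all remaining work---alignment, sign interaction between $\pm$ inputs, zero-padding, and canonical-form trimming---is routine finite-state bookkeeping.
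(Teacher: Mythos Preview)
Your Boolean-algebra argument is correct and matches the paper. Your treatment of $R^{-1}$ is also correct, and in fact more explicit than the paper's: the observation that $(r,m)^{-1}=(-q^{-m}r,-m)$ amounts, on the level of pointed expansions, to flipping the sign and swapping the two decorations is a clean direct argument, whereas the paper obtains $R^{-1}$ as a by-product of its product construction (solve $gh=1$ for $h$ and project).

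The product case, however, has a genuine gap. The transduction $T$ you describe---input $\pe(g)\,\#\,\pe(h)$, output $\pe(gh)$---is \emph{not} rational. Indeed, $T^{-1}(\{\pe(1)\})=\{\pe(g)\,\#\,\pe(g^{-1})\mid g\in\BS(1,q)\}$, and intersecting this with the regular language $+\,1\,0^*\,\cursor{\rpoint{0}}\;\#\;-\,1\,0^*\,\cursor{\rpoint{0}}$ gives $\{+\,1\,0^{a}\,\cursor{\rpoint{0}}\;\#\;-\,1\,0^{a}\,\cursor{\rpoint{0}}\mid a\ge 0\}$, which is not regular. The obstruction is not carry propagation but precisely the alignment step you call ``routine'': a transducer reading $u\#v$ left to right has already consumed $u$ when it starts on $v$, and no finite state can retain enough of $u$ to add it digit-by-digit to $v$. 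The paper sidesteps this by working not with a two-tape transduction on a concatenated input but with a rational subset of $\Gamma^*\times\Gamma^*\times\Gamma^*$: a three-tape automaton can advance the tapes asynchronously, keeping the cursor on tape~$1$ aligned with the radix point on tape~$2$ while writing the sum on tape~$3$. Restricting the first two components to $\pe(R)$ and $\pe(S)$ and projecting to the third then gives $\pe(RS)$.
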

\iflongversion
  \begin{proof}
    The first statement is due to the fact that the regular languages
    form an effective Boolean algebra and that the set of all $\pe(g)$
    for $g\in\BS(1,q)$ is regular.
    
    It is easy to construct an automaton $\calM$ over
    $\Gamma^*\times\Gamma^*\times\Gamma^*$, for suitable $\Gamma$, that
    accepts the relation
    $T=\{(\pe(g),\pe(h), \pe(gh)) \mid g,h\in\BS(1,q)\}$: It makes sure
    that the radix point of the word in the second component is aligned
    with the cursor position of the word in the first component. Then,
    multiplying the two elements amounts to adding up the $q$-ary
    expansions (see also \cref{lem:sum} for a more general
    statement). Given automata for $\pe(R)$ and $\pe(S)$, we can easily
    modify $\calM$ so as to accept
    $\{(\pe(g), \pe(h), \pe(gh)) \mid g\in R,~h\in S\}$.  Projecting to
    the third component then yields an automaton for the language
    $\pe(RS)$. A similar modification of $\calM$ leads to
    $\{(\pe(g),\pe(h),\pe(gh)) \mid g\in
    R,~h\in\BS(1,q),~\pe(gh)=\pe(1)\}$. Projecting to the second
    component yields an automaton for $\pe(R^{-1})$.
  \end{proof}
\else
  The proof is straightforward.
\fi
Together with \cref{main-effective-regularity}, this implies that emptiness of
Boolean combinations (hence inclusion, equality) is decidable for rational
subsets. To further highlight the advantages of PE-regular subsets, we also show
that the rational subsets of $\BS(1,q)$ are not closed under intersection.

\iflongversion
  \subsection{The rational subsets of BS(1,\,\emph{q}) are not closed under
    intersection}\label{non-closure-intersection}

  We present an example of rational subsets $R_1,R_2\subseteq\BS(1,q)$
  such that $R_1\cap R_2$ is not rational. Let $R$ be the rational
  subset accepted by the automaton in \cref{automaton-intersection}. In
  $p_1$, it moves the cursor an even number of positions to the
  right. In $p_2$, it moves an even number of positions to the left and
  on the way, it adds $q$ in a subset of the even positions. In $p_3$,
  it moves to the right again. Then $R$ contains all elements
  $(r,m)\in\Z[\tfrac{1}{q}]\rtimes\Z$ where $r=\sum_{i\in A} q^{2i+1}$
  for some finite $A\subseteq\Z$ and $m\in 2\Z$. Now consider the sets
  $R_1=aR$, $R_2=Ra$, and their intersection $I=R_1\cap R_2$. Then we
  have $(r,m)\in R_1$ if and only if $r=1+\sum_{i\in A} q^{2i+1}$ and
  $m\in 2\Z$ for some finite $A\subseteq\Z$. Moreover, $(r,m)\in R_2$ if
  and only if $r=q^m+\sum_{i\in A} q^{2i+1}$ and $m\in 2\Z$ for some
  finite $A\subseteq\Z$.  Therefore, we have $(r,m)\in R_1\cap R_2$ if
  and only if $r=1+\sum_{i\in A} q^{2i+1}$ and $m=0$ for some finite
  $A\subseteq\Z$. Using the following \lcnamecref{bounded-precision}, we
  shall conclude that $I=R_1\cap R_2$ is not rational.
  \begin{restatable}{lemma}{boundedPrecision}\label{bounded-precision}
    Let $R\subseteq\Z[\tfrac{1}{q}]\rtimes\Z$ be a rational subset. If
    $R\subseteq\Z[\tfrac{1}{q}]\times\{0\}$, then there is a $k\in\N$
    with $R\subseteq\tfrac{1}{q^k}\Z\times\{0\}$.
  \end{restatable}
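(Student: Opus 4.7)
My plan is to fix an automaton $\calA = (Q, \Sigma, E, q_0, q_f)$ accepting $R$ as a rational subset of $\BS(1,q)$, let $n = |Q|$, and show that $k = n - 1$ suffices. The central insight is that the hypothesis $R \subseteq \Z[\tfrac{1}{q}] \times \{0\}$ says every accepting run of $\calA$ returns to cursor position $0$, and this global constraint forces every accepting run to keep its cursor within a bounded window depending only on $n$.

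First I would establish a cycle lemma: every cycle $C$ in $\calA$ that appears on some accepting run has zero net cursor change, i.e., $\#t(C) = \#t^{-1}(C)$. Indeed, if $\rho_1$ reaches the start of $C$ from $q_0$ and $\rho_2$ goes from the start of $C$ to $q_f$, then $\rho_1 C^j \rho_2$ is accepting for all $j \ge 0$; the requirement that its final cursor be $0$ for every $j$ forces the cursor contribution of $C$ to vanish.

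I would then define a partial function $f \colon Q \to \Z$ by letting $f(q)$ be the cursor value when $q$ is visited in any accepting run. Well-definedness is the heart of the argument: within a single run, two visits of the same state are joined by a cycle, which has zero cursor change by the lemma; across runs, splicing the prefix of one accepting run up to $q$ with the suffix of another from $q$ yields an accepting run whose final-cursor constraint forces the two candidate values at $q$ to coincide. Since consecutive cursor values along any run differ by at most $1$ and $f(q_0) = f(q_f) = 0$, the image of $f$ is a contiguous interval of integers containing $0$ of size at most $n$, and is therefore contained in $[-(n-1),\, n-1]$.

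Finally, every $a$- or $a^{-1}$-edge traversed in an accepting run contributes $\pm q^m$ with $m \in [-(n-1),\, n-1]$, so the $\Z[\tfrac{1}{q}]$-component of any element of $R$ lies in $\tfrac{1}{q^{n-1}}\Z$, giving the lemma with $k = n-1$. The subtle step I expect to take the most care is the well-definedness of $f$ across different accepting runs; it is precisely where the hypothesis that accepting runs end at cursor $0$ is used a second time, after the cycle lemma, and once that is nailed down, the bound on the image follows mechanically from the contiguity observation.
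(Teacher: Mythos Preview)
Your proposal is correct and follows essentially the same approach as the paper: the paper projects the automaton to $\Z$, invokes a general ``state evaluation'' lemma (any trim automaton over a group accepting $\{1\}$ assigns a well-defined group element to each state), and concludes that the cursor value at each state is bounded---your function $f$ is exactly this state evaluation, and your splicing argument is the same as the paper's uniqueness proof. The only addition is your contiguity observation, which yields the explicit bound $k=n-1$ where the paper merely asserts some $k$ exists.
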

  Intuitively, this says that if all elements in a rational subset have
  the cursor in the origin, then its elements must have bounded
  precision. This can be shown using a pumping argument: If $R$ did contain
  elements with high powers of $q$ in the denominator, then the cursor
  must move arbitrarily far to the right, but then it can also end up to
  the right of the origin, which is impossible. Since
  $I\subseteq\Z[\tfrac{1}{q}]\times\{0\}$ contains $(1+q^{-2i+1},0)$ for
  any $i\in\N$, it cannot be rational.

  For the detailed proof of \cref{bounded-precision}, it is more
  convenient to argue with the well-known observation that an automaton
  that accepts a fixed element has to encode the element read so far in
  its state.  Let us make this formal. If $\calA=(Q,\Sigma,E,q_0,F)$ is
  an automaton over a group $G$, then a \emph{state evaluation} is a map
  $\eta\colon Q\to G$ such that $\eta(q_0)=1$ and for every edge
  $(p,g,p')\in E$, we have $\eta(p')=\eta(p)g$. Hence, a state
  evaluation assigns to each state $p$ a fixed group element $\eta(p)$
  such that on any path from $q_0$ to $p$, $\calA$ reads $\eta(p)$. An
  automaton is called \emph{trim} if (i)~every state is reachable from
  an initial state and (ii)~from every state, one can reach a final
  state.

  \begin{lemma}\label{state-evaluation}
    Let $\calA$ be a trim automaton over a group $G$ that accepts the
    set $\{1\}$. Then $\calA$ admits a state evaluation.
  \end{lemma}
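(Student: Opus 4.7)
The plan is to define $\eta(p)$, for each state $p$, as the production of any run from $q_0$ to $p$, and then show that this is well-defined thanks to trimness and the hypothesis $\lang{\calA}=\{1\}$. Once well-definedness is established, the two axioms of a state evaluation follow essentially for free.

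In detail, I would first observe that since $\calA$ is trim, for every $p\in Q$ there exists at least one run $\pi$ from $q_0$ to $p$, and at least one run $\sigma$ from $p$ to $q_f$. Fixing one run $\pi_p$ from $q_0$ to $p$, define $\eta(p)=[\pi_p]$. Then $\eta(q_0)=1$ is obtained by taking $\pi_{q_0}$ to be the empty run.

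The key lemma is well-definedness, i.e.\ that every run $\pi$ from $q_0$ to $p$ satisfies $[\pi]=\eta(p)$. Suppose, for contradiction, that $\pi_1$ and $\pi_2$ are two runs from $q_0$ to $p$ with $[\pi_1]=g_1\neq g_2=[\pi_2]$. Pick, by trimness, a run $\sigma$ from $p$ to $q_f$ with $[\sigma]=h$. Then $\pi_1\sigma$ and $\pi_2\sigma$ are accepting runs with productions $g_1h$ and $g_2h$. Since $\lang{\calA}=\{1\}$, both equal $1$, and so $g_1=h^{-1}=g_2$ in $G$, contradicting $g_1\neq g_2$. Therefore $\eta$ is well-defined independently of the chosen path.

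Finally, the edge condition is immediate: given $(p,g,p')\in E$, pick any run $\pi$ from $q_0$ to $p$ (exists by trimness). Then $\pi\cdot(p,g,p')$ is a run from $q_0$ to $p'$ whose production is $[\pi]\,g=\eta(p)\,g$, and by well-definedness this production equals $\eta(p')$, so $\eta(p')=\eta(p)g$. I do not anticipate a real obstacle; the only delicate point is the well-definedness step, which crucially uses both halves of trimness (the ``from $p$ there is a path to $q_f$'' half ensures every two $q_0$-to-$p$ runs can be completed to accepting runs).
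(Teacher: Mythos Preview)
Your proposal is correct and follows essentially the same approach as the paper: define $\eta(p)$ as the production of some run from $q_0$ to $p$, use trimness and $\lang{\calA}=\{1\}$ to show this is independent of the chosen run (via the same cancellation argument $[\pi_1][\sigma]=1=[\pi_2][\sigma]$), and then derive the edge condition from well-definedness. The only cosmetic difference is that you set $\eta(q_0)=1$ by taking the empty run explicitly, whereas the paper infers it from uniqueness.
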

  \begin{proof}
    Since $\calA$ is trim, we can choose $\eta\colon Q\to G$ such that
    for every $p\in Q$, there is a run from $q_0$ to $p$ in $\calA$ that
    reads $\eta(p)$.

    The fact that $\calA$ accepts $\{1\}$ implies that there is only one
    such $\eta$: Suppose $\rho_1$, $\rho_2$ are runs from $q_0$ to $p$
    and $\rho$ is a run from $p$ to a final state. Then since $\calA$
    accepts $\{1\}$, we have $[\rho_1][\rho]=1=[\rho_2][\rho]$ and thus
    $[\rho_1]=[\rho_2]$. Hence, $\eta$ is uniquely determined.
    
    This implies that $\eta$ is a state evaluation: We must have
    $\eta(q_0)=1$, because of uniqueness of $\eta$. Moreover, if there
    is an edge $(p,g,p')$, then we can pick a run $\rho$ from $q_0$ to
    $p$ and by uniqueness of $\eta$, we have
    $\eta(p')=[\rho]g=\eta(p)g$.
  \end{proof}

  Using \cref{state-evaluation}, we are ready to prove \cref{bounded-precision}.
  \begin{proof}
    Suppose $\calA$ is an automaton over $\Z[\tfrac{1}{q}]\rtimes\Z$
    that accepts a subset of $\Z[\tfrac{1}{q}]\times\{0\}$. Without loss
    of generality, we may assume that $\calA$ is trim and every edge has
    a label in $\{t,t^{-1},a,a^{-1}\}$. Consider the automaton $\calA'$
    obtained from $\calA$ by projecting to the right component. Then
    $\calA'$ is a trim automaton over $\Z$ that accepts
    $\{0\}$. According to \cref{state-evaluation}, $\calA'$ admits a
    state evaluation $\eta\colon Q\to\Z$. Since $Q$ is finite, the image
    of $\eta$ is included in some interval $[-k,k]$.

    This implies that for any state $p$ of $\calA$, any element $(r,m)$
    read on a path from $q_0$ to $p$ satisfies $m\in[-k,k]$. Therefore,
    every edge labeled $a^{\pm 1}$ adds a number $s=\pm q^m$ with $m\in[-k,k]$
    to the left component. Since in this case
    $s\in \tfrac{1}{q^k}\Z$, the \lcnamecref{bounded-precision}
    follows.
  \end{proof}

  \subsection{The PE-regular subsets of BS(1,\,\emph{q}) are not closed under
    iteration}\label{non-closure-iteration}

  The subset $A=\{(1+2^{-i},0) \mid i\ge 1\}$ of $\BS(1,2)$ is PE-regular, because
  $\pe(A)=\cursor{\rpoint{1}}0^*1$ is a regular language.  Let us now prove that
  the set $A^*$ is indeed not PE-regular. We begin with an auxiliary lemma.
  \begin{lemma}\label{summing-up}
    Suppose $k,m\ge 0$ and $1\le d_1\le\cdots\le d_k$ and $1\le e_1<e_2<\cdots<e_\ell$ with
    \begin{equation} \sum_{i=1}^k (1+2^{-d_i})=m+\sum_{i=1}^\ell 2^{-e_i} \label{two-sums}\end{equation}
    Then $m\ge \ell$.
  \end{lemma}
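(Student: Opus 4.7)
The plan is to reduce the statement to a clean invariant on multisets via a carry-propagation view of the left-hand side. Setting $S = \sum_{i=1}^k 2^{-d_i}$ and $T = \sum_{j=1}^\ell 2^{-e_j}$, note that $T \in [0, 1)$ since the $e_j$ are distinct positive integers, and $T$ is its own unique finite binary expansion, hence has exactly $\ell$ one-bits. Writing $S = c + F$ with $c \in \N$ and $F \in [0, 1)$, the hypothesized equation becomes $(k + c) + F = m + T$, so by uniqueness of the integer/fractional decomposition we get $m = k + c$ and $F = T$. In particular, $F$ has $\ell$ one-bits in binary, and the claim reduces to the following combinatorial statement: for any multiset $D = \{d_1, \ldots, d_k\}$ of positive integers, if $\sum_{d \in D} 2^{-d} = c + F$ with $c \in \N$ and $F \in [0, 1)$ of binary Hamming weight $\ell$, then $k + c \ge \ell$.

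For this reduced claim I would run the standard carry-propagation procedure on $D$: while some value $d$ has multiplicity at least $2$ in $D$, replace two copies of $d$ by one copy of $d - 1$ if $d \ge 2$, or remove two copies of $1$ and increment $c$ by one if $d = 1$. Every step preserves the total value $\sum_{d \in D} 2^{-d} + c$, and the key observation is that the potential $|D| + c$ strictly decreases by exactly one per step: two copies are removed and one inserted in the $d \ge 2$ case, while two copies are removed and $c$ is incremented in the $d = 1$ case. Hence the procedure terminates (since $|D| + c$ is a nonnegative integer), and $D$ ends up with pairwise distinct positive values, so $|D| = \ell$ and $c$ equals the integer part of $S$. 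Comparing the initial and final values of the invariant then yields $k = (|D| + c)_{\text{init}} \ge (|D| + c)_{\text{final}} = \ell + c$, whence $m = k + c \ge \ell + 2c \ge \ell$.

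The only delicate point is the $d = 1$ step, where a carry spills from the fractional part into the integer part; this boundary case is exactly why the correct potential to track is $|D| + c$ rather than $|D|$ alone. Everything else is a routine verification that each rewriting step preserves the value, that the final configuration matches the integer/fractional decomposition of $S$, and that the invariant yields the claimed inequality.
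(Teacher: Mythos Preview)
Your proof is correct. Both your argument and the paper's hinge on the same underlying fact---that carries can only decrease the number of one-bits---but the packaging differs. The paper splits the claim into two pieces: first $m \ge k$ by comparing magnitudes (the left side is at least $k$, the right side is less than $m+1$), then $k \ge \ell$ by induction on $k$, observing that each new summand $1+2^{-d_{k+1}}$ can raise the fractional bit-count by at most one. You instead extract $m = k + c$ directly from the integer/fractional decomposition and then run an explicit carry-propagation procedure with the potential $|D| + c'$, obtaining the slightly sharper $k \ge \ell + c$ (hence $m \ge \ell + 2c$). Your route is a touch more mechanical and yields a tighter bound; the paper's is shorter and avoids introducing the auxiliary procedure. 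Either way the content is the same elementary carry argument.
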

  \begin{proof}
    We prove $m\ge k$ and $k\ge\ell$.  We begin with $m\ge k$. Let $s$
    be the value of the two sums. Then clearly $k\le s$ and $s<m+1$,
    hence $k\le m+1$. Since both $k$ and $m$ are integers, it is
    impossible that $k>m$. Thus $k\le m$.

    The inequality $k\ge\ell$ follows by induction on $k$. Suppose that
    \cref{two-sums} holds and we add $1+2^{-d_{k+1}}$. We distinguish two cases:
    \begin{itemize}
    \item If in the binary expansion on the right, there is no
      digit $2^{-d_{k+1}}$, then the new binary expansion gains one
      $1$ digit and hence $\ell$ increases by one.
    \item If there already is a digit at $2^{-d_{k+1}}$, then the new
      binary expansion is obtained by flipping some $r\ge 1$ digits
      from $1$ to $0$ and flipping one $0$ into a $1$. Hence, $\ell$
      drops by $r$ and rises by $\le 1$.
    \end{itemize}
    In any case, the value for $\ell$ rises by at most one. This proves
    $k\ge \ell$.
  \end{proof}


  We regard $\Z[\tfrac{1}{q}]$ as a subset of
  $\Z[\tfrac{1}{q}]\rtimes\Z$ by identifying $r\in\Z[\tfrac{1}{q}]$ with
  $(r,0)\in\Z[\tfrac{1}{q}]\rtimes \Z$. Then in particular for $m\in\Z$,
  $\pe(m)\in\pm\{0,\ldots,q-1\}^*\{\cursor{\rpoint{0}},\ldots,\cursor{\rpoint{(q-1)}}\}$
  is the $q$-ary expansion of $m$, with the additional $\cursor{{}}$
  and $\rpoint{{}}$ at the right-most digit.
  \begin{lemma}\label{smallest-integer}
    Let $n\in\N$. Then $n$ is the smallest number $m\in\N$ with
    $\pe(m)\cdot 1^n\in \pe(A^*)$.
  \end{lemma}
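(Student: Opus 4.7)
The plan is to prove the two inequalities separately: exhibit a witness for $m=n$, and rule out any smaller $m$ via \cref{summing-up}.

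\emph{Achievability for $m=n$.} I would take the explicit product
\[v = \prod_{j=1}^{n} (1+2^{-j},0) \in A^*,\]
whose first component equals $n + \sum_{j=1}^{n} 2^{-j}$. Its cursor is $0$. Since $\sum_{j=1}^{n} 2^{-j} < 1$, the integer part of this number is exactly $n$ and its fractional part is $0.\underbrace{11\cdots 1}_{n}$, which has no trailing zero when $n \geq 1$. Thus the canonical pointed expansion of $v$ is precisely $\pe(n) \cdot 1^n$, so $\pe(n)\cdot 1^n \in \pe(A^*)$. The case $n=0$ is covered by the empty product, which yields $(0,0)$ and $\pe(0)\cdot 1^0 = \pe(0)$.

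\emph{Minimality.} Suppose $\pe(m)\cdot 1^n \in \pe(A^*)$ for some $m\in\N$. Every element of $A$ has cursor $0$, so $A^* \subseteq \Z[\tfrac{1}{2}]\times\{0\}$, and hence there exists $(r,0)\in A^*$ with $\pe((r,0)) = \pe(m)\cdot 1^n$. Reading off the value of $r$ from this word (integer part from $\pe(m)$, fractional part $1^n$ after the radix point), we obtain $r = m + \sum_{i=1}^{n} 2^{-i}$. On the other hand, $(r,0)\in A^*$ means that $r = \sum_{j=1}^{k}(1+2^{-d_j})$ for some $k\geq 0$ and $1\leq d_1\leq \cdots\leq d_k$. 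Applying \cref{summing-up} to
\[\sum_{j=1}^{k}(1+2^{-d_j}) \;=\; m + \sum_{i=1}^{n} 2^{-i}\]
with $\ell=n$ and $e_i=i$ (strictly increasing, as required), we conclude $m\geq n$.

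There is no real obstacle here: once \cref{summing-up} is in hand, the argument is a direct application. The only care needed is in the achievability step, to verify that the concatenation $\pe(n)\cdot 1^n$ actually coincides with the canonical pointed expansion of $n+\sum_{i=1}^{n}2^{-i}$, which follows because $\pe(n)$ already has no leading zero and the appended block $1^n$ has no trailing zero for $n\geq 1$.
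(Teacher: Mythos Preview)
Your proof is correct and follows essentially the same approach as the paper: exhibit the element $\sum_{i=1}^n (1+2^{-i})\in A^*$ for achievability, and invoke \cref{summing-up} for the lower bound. You add a bit more care in verifying that $\pe(n)\cdot 1^n$ is indeed the canonical pointed expansion and in handling $n=0$ separately, but the argument is the same.
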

  \begin{proof}
    Since $n+2^{-1}+\cdots+2^{-n}=\sum_{i=1}^n (1+2^{-i})$ clearly
    belongs to $A^*$, we have $\pe(n)\cdot 1^n\in\pe(A^*)$. Now
    suppose $\pe(m)\cdot 1^n\in\pe(A^*)$. Then we have
    \[ m+2^{-1}+\cdots 2^{-n}=\sum_{i=1}^k (1+2^{-d_i}) \]
    for some $k\ge 0$ and some $1\le d_1\le d_2\le \cdots \le d_k$. By
    \cref{summing-up}, this implies $m\ge n$.
  \end{proof}

  Now \Cref{smallest-integer} allows us to show that $\pe(A^*)$ is not
  regular.  Recall that for a language $L\subseteq\Gamma^*$, a
  \emph{right quotient} is a set of the form
  $Lu^{-1}:=\{v\in\Gamma^* \mid vu\in L\}$. Since a regular language has
  finite syntactic monoids (see, e.g.~\cite{Berstel1979}), it has only
  finitely many right quotients.
  Suppose $\pe(A^*)$ is regular. For each $n\in\N$, consider
  the right quotient $Q_n=\pe(A^*)(1^n)^{-1}$.
  Then according to \cref{smallest-integer}, for each $n\in\N$,
  $n$ is the smallest number $m$ with $\pe(m)\in Q_n\cap
  \pe(\Z)$. Thus, the sets
  $Q_0,Q_1,Q_2,\ldots$ are pairwise distinct, contradicting the fact
  that $\pe(A^*)$ has only finitely many right quotients.
\else
  \begin{example}[Intersection of rational subsets]\label{non-closure-intersection}
    Let $R$ be the set accepted by the automaton in
    \cref{automaton-intersection}. The automaton first moves an even
    number of positions to the right ($p_1$) and then an even number of
    positions to the left while adding $1$ in a subset of the odd
    positions ($p_2$). Finally, it goes an even number of positions to
    the left again.  Note that $(r,m)\in R$ if and only if
    $r=\sum_{i\in A} q^{2i+1}$ for some finite $A\subseteq\Z$ and
    $m\in2\Z$.
    Now consider the rational sets $aR$ and $Ra$ and their
    intersection $I=aR\cap Ra$. Note that $(r,m)\in aR$ if and
    only if $r=1+\sum_{i\in A} q^{2i+1}$ and $m\in 2\Z$ for some finite
    $A\subseteq\Z$. Moreover, $(r,m)\in Ra$ if and only if
    $r=q^m+\sum_{i\in A} q^{2i+1}$ and $m\in 2\Z$ for some finite
    $A\subseteq\Z$.  Therefore, we have $(r,m)\in I$ if and
    only if $r=1+\sum_{i\in A} q^{2i+1}$ and $m=0$ for some finite
    $A\subseteq\Z$.  Since $I$ only contains elements with cursor $0$,
    but carries non-zero digits in positions that are arbitrarily far to
    the right, it follows that $I$ is not
    rational.\hfill\(\vartriangleleft\)
  \end{example}
  However, the PE-regular subsets of $\BS(1,q)$ are not closed under
  iteration.
  \begin{example}[Iteration of PE-regular subsets]\label{non-closure-iteration}
    The subset $A=\{(1+2^{-i},0) \mid i\ge 1\}$ of $\BS(1,2)$ is
    PE-regular, because $\pe(A)=\cursor{\rpoint{1}}0^*1$ is a regular
    language. However, the set $A^*$ is not PE-regular: one can show that
    for each $n \ge 1$, we have
    $n=\min\{m\in\N\mid (m+2^{-1}+\cdots+2^{-n},0)\in A^*\}$.%
    \footnote{We denote $\N = \{0, 1, 2, \ldots\,\}$.}
    Therefore,
    for each $n \ge 1$,
    a word in $\pe(A^*)$ with $1^n$ to the right of the radix point
    can have an integer part of~$n$ and cannot have a smaller integer part.
    This implies that
    $\pe(A^*)$ is not regular.\hfill\(\vartriangleleft\)
  \end{example}
\fi


\section{Every rational subset of BS(1,\,\emph{q}) is effectively PE-regular}\label{rational-to-regular}


In this section, we prove \cref{main-effective-regularity}.
We first illustrate our approach on an example.

\begin{example}
\label{ex:three}
Consider the automaton over $\BS(1,2)$ in \Cref{automaton-three}.
In its only initial and final state~$p_0$, it has a choice of two operations:
(i) move the cursor one position to the right (i.e. multiplication by $t^{-1}$) or
(ii) perform the increment on two neighbouring cells
     and stop one position left of them (i.e. multiplication by $atat$).
The automaton can perform these operations arbitrarily many times in any order.

We shall prove that the automaton accepts
\[ R=\{( 3 n \cdot 2^{m-2k}, m) \mid n \in \N,\ k\in \N,\ m \in \Z,\ 0\ge m-2k,\ 3n\cdot 2^{m-2k} \ge f(m,k) \}\enspace, \]
where
\[ f(m,k)=\sum_{i=1}^k 3\cdot 2^{m-2i}=\negthickspace\sum_{j=m-2k}^{m-1}\negthickspace 2^j = 2^m - 2^{m-2k} \enspace. \]

The language $\pe(R)$ is regular. Indeed, note that the number
$f(m,k)$ has a particularly simple binary representation. A pointed
expansion of $(r,m)$ belongs to $\pe(R)$ if there is a position
$m-2k\le 0$ such that reading the digits left of position $m-2k$
yields a number (namely $3n$) that (a)~is divisible by $3$ and
(b)~lies above a bound with a simple binary expansion.

Let us now prove that the automaton accepts $R$. Let $\rho$ be an
accepting run producing $(r,m)$. Choose $k\in\N$ so that
$\pmin(\rho)=m-2k$ or $\pmin(\rho)=m-2k+1$ (depending on whether
$m-\pmin(\rho)$ is even or odd).  Then $0\ge\pmin(\rho)\ge m-2k$.
Each time operation~(ii) is performed from position $\ell \in \Z$, the
update is $(r, m) \to (r + 3 \cdot 2^\ell, m+2)$.

Now, once $\rho$ visits position~$\pmin(\rho)$, in order to eventually
reach a position $\ell>\pmin(\rho)$, the operation~(ii) must be
performed on some position $\ge \ell-2$. In particular, to reach
position $m$, it must be performed at some position $m_1\ge m-2$.  If
$m_1>\pmin(\rho)$, to reach $m_1$, it must also be performed at some position
$m_2\ge m-4$, etc. Therefore, $\rho$ has to perform~(ii) at positions
$m_i\ge m-2i$ for each $i$ with $m>m-2i\ge \pmin(\rho)-1$.  In other
words, it has to do this for each $i=1,\ldots,k$.  Each time $\rho$
performs~(ii) at $m_i$, it adds $3\cdot 2^{m_i}$. Moreover, each
extra time $\rho$ performs~(ii), it adds a multiple of
$3\cdot 2^{m-2k}$, because $\pmin(\rho)\ge m-2k$. Thus, the number
produced in total is some $3n\cdot 2^{m-2k}$ where
\[ 3n\cdot 2^{m-2k}\ge \sum_{i=1}^k 3\cdot 2^{m_i}\ge \sum_{i=1}^k 3\cdot 2^{m-2i}=f(m,k) \enspace.\]


Conversely, suppose
$n \in \N$ and $k\in\N$, $m \in \Z$, $0\ge m-2k$, and $3n\cdot 2^{m-2k}\ge f(m,k)$.
The automaton first moves to position~$m-2k$ using operation (i). Then, it performs operations~(ii), (i), and (i) again, $\ell$ times in a loop (we specify $\ell$ later). That way, it adds $3\ell\cdot 2^{m-2k}$. Then, it moves to position $m$ by applying operation~(ii) exactly $k$ times. Hence, it applies~(ii) at positions $m-2i$ for $i=1,\ldots,k$ and each time, it adds $3\cdot 2^{m-2i}$. In total, the effect is
\[ 3\ell\cdot 2^{m-2k} + \sum_{i=1}^k 3\cdot 2^{m-2i} = 3\ell\cdot 2^{m-2k} + f(m,k) \enspace.\]
Since $3n\cdot 2^{m-2k}\ge f(m,k)$ and $f(m,k)$ is an integer multiple of $3\cdot 2^{m-2k}$, we can choose $\ell \in \N$ so as to produce $3n\cdot 2^{m-2k}$.%
\hfill\(\vartriangleleft\)
\end{example}


Following this example, we first show that any run has the same production as a
thin (i.e. bounded thickness) run in which thin returning-left cycles are
inserted (p.~\pageref{par:decomp}); in the example, such a cycle applies
operations~(ii), (i), and (i).  We then prove that the productions of thin runs
form a PE-regular set (p.~\pageref{par:thinreg}); in the example, the thin run
moves to the right to position $\pmin(\rho)$ using operation~(i) and then left
to $m \ge \pmin(\rho)$ using operations~(i) and~(ii).  Finally, we show that iterating returning-left
thin cycles also leads to a PE-regular set (p.~\pageref{par:itcyc}); in the
example, this is how we get all numbers divisible by~$3$ above a particular bound.  We combine these
three statements to prove \Cref{main-effective-regularity}.

In combining the thin run with cycles, we will need to ensure that the cycles
are anchored on the correct state.  To this end, we introduce an
annotated version of \(\pe([\rho])\) as follows.
Let $\calA$ be an automaton over $\BS(1,q)$ with state set \(Q\).  Let \(\rho\) be a
run in \(\calA\) starting and ending in arbitrary states and with
\([\rho] = (r, m)\).  Letting \(\bar{Q} = \{\bar{p} \mid p \in Q\}\) be a copy of
\(Q\), we define \(\sv(\rho)\), the \emph{state view} of \(\rho\), to be the word over the
alphabet \(\Phi_q \cup Q \cup \bar{Q} \cup \{\pm\}\) built as follows.  First, write:
\(\pe([\rho]) = \pm a_{k_1}\cdots a_1a_0a_{-1}\cdots a_{-k_2},\)%
where \(a_0\) has subscript \(\bullet\).  Second, let \(P_i \in (Q \cup \bar{Q})^{|Q|}\), for
\(i \in \{-k_2, \ldots, k_1\}\), be a word that contains all the states of \(Q\) once in a
fixed ordering of \(Q\),
either with a bar or not; the states without a bar are exactly those that
visit position \(i\) in \(\rho\).  That is,
\(p\) appears in \(P_i\) iff there is a prefix of \(\rho\) ending in \(p\) whose final
position is \(i\).  The state view of \(\rho\) is then:%
\[\sv(\rho) = \pm a_{k_1} \cdot P_{k_1}\cdots a_0 \cdot P_0 \cdot a_{-1} \cdot P_{-1} \cdots
a_{-k_2} \cdot P_{-k_2}\enspace.\]%
We naturally extend \(\sv\) to sets of runs.

\def\secorpar#1{\iflongversion\subsection{#1}\else\subparagraph{#1}\fi}

\secorpar{Any run is equivalent to a thin run augmented with thin
  returning-left cycles}\label{par:decomp}

We now focus on two properties of runs: the states they visit in the automaton
and the final position of their prefixes.  To that end, we introduce the
following notions.  For \(Q\) a finite set, a \emph{position path} is a word
\(\pi \in (Q \times \Z)^*\).
We extend the analogy with graphs calling elements of \(Q \times \Z\) \emph{vertices},
talking of the vertices \emph{visited} by a position path, and using the notion
of (position) \emph{subpaths} and \emph{cycles}.  The \emph{thickness} of a
position path \(\pi\) is defined as:
\[\thickness(\pi) = \max_{n \in \Z} |\{i \mid \pi_i = (q, n)\text{ for some }q\}|\enspace.\]

\begin{restatable}{lemma}{positionpaths}\label{lem:graph}
  Let \(Q\) be a finite set and \(\pi \in (Q \times \Z)^*\) be a position path.  For any
  subset \(V'\) of the vertices visited by \(\pi\), there exists a subpath
  \(\pi'\) of \(\pi\) such that:
  \begin{enumerate}
  \item \(\pi'\) starts and ends with the same vertices as \(\pi\),
  \item \(\pi'\) visits all the vertices in \(V'\),
  \item \(\thickness(\pi') \leq |Q|\cdot(1+2|V'|)\),
  \item \(\pi - \pi'\) consists only of cycles.
  \end{enumerate}
\end{restatable}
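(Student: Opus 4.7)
The plan is to pick one occurrence of each vertex of $V'$ in $\pi$, split $\pi$ into consecutive segments at those occurrences, and then simplify each segment by iteratively removing loops. Concretely, let $k = |V'|$. For each $v \in V'$, choose an index $i_v$ with $\pi_{i_v} = v$, and enumerate $V' = \{v_1, \ldots, v_k\}$ so that $i_{v_1} < i_{v_2} < \cdots < i_{v_k}$. Setting $j_0 = 0$, $j_\ell = i_{v_\ell}$ for $1 \le \ell \le k$, and $j_{k+1}$ equal to the last index of $\pi$, we obtain $k+1$ (endpoint-overlapping) segments $\sigma_\ell = \pi_{j_\ell}\pi_{j_\ell+1}\cdots\pi_{j_{\ell+1}}$ whose concatenation, with shared endpoints identified, is exactly $\pi$.

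Within each segment $\sigma_\ell$, I would apply the standard loop-removal procedure: while some vertex appears at two distinct indices $a < b$ inside $\sigma_\ell$, replace the infix $\sigma_\ell[a]\,\sigma_\ell[a+1]\cdots\sigma_\ell[b]$ by the single vertex $\sigma_\ell[a]$. This terminates (each step strictly shortens the word) and preserves the first and last vertices of $\sigma_\ell$, since every loop removed lies strictly inside the segment. The simplified $\sigma'_\ell$ is then a \emph{simple} walk from $\pi_{j_\ell}$ to $\pi_{j_{\ell+1}}$. Concatenating $\sigma'_0,\ldots,\sigma'_k$ yields $\pi'$, which (1) starts at $\pi_0$ and ends at $\pi_{j_{k+1}}$, (2) visits every $v_\ell$ at the boundary of $\sigma'_{\ell-1}$ and $\sigma'_\ell$, and (4) differs from $\pi$ exactly by the sub-walks removed, each of which is a walk from some vertex back to itself and hence a cycle. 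For the thickness bound (3), since every vertex of a simple walk corresponds to a distinct (state, position) pair, at most $|Q|$ vertices of $\sigma'_\ell$ can lie at any fixed position $n \in \Z$; summing naively over the $k+1$ segments yields $\thickness(\pi') \le |Q|(k+1) = |Q|(|V'|+1) \le |Q|(1 + 2|V'|)$.

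The only real subtlety is that the loop-removal must be performed within individual segments rather than globally: a global pass could eliminate some $v_\ell \in V'$ that condition (2) forces us to retain. Segmenting at the chosen occurrences $j_1 < \cdots < j_k$ protects these distinguished vertices by making them permanent segment endpoints, which the loop-removal procedure never touches. The remaining ingredients -- termination of loop-removal, the fact that a simple walk has thickness at most $|Q|$, the per-segment accounting, and the verification that each removed sub-walk qualifies as a cycle (immediate from the condition $\sigma_\ell[a] = \sigma_\ell[b]$) -- are routine and slot together directly.
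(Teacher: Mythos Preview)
Your argument is correct and in fact slightly sharper than the paper's, but the two proofs proceed in opposite directions. The paper builds $\pi'$ \emph{bottom-up}: it starts with a shortest path in the multigraph described by $\pi$ (thickness at most $|Q|$), and then, for each vertex of $V'$ not yet covered, splices in a cycle through that vertex, found via an Eulerian-style argument on the remaining edges; each splice costs at most $2|Q|$ in thickness, giving $|Q|(1+2|V'|)$. You instead work \emph{top-down}: anchor one occurrence of each $v\in V'$, cut $\pi$ into $|V'|+1$ consecutive segments at those anchors, and simplify each segment to a simple walk by iterated loop removal. Your route is more elementary (no need for the Eulerian detour-finding step) and yields the tighter bound $|Q|(|V'|+1)\le |Q|(1+2|V'|)$; the paper's route has the minor advantage that it makes explicit how each additional vertex of $V'$ contributes to the thickness. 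Point~4 is immediate in your version since every removed infix is visibly a closed walk, whereas the paper deduces it once from the balanced-degree observation.
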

\iflongversion
  \begin{proof}
    We consider the directed multigraph \(G\) that is described by \(\pi\): the vertices
    in \(G\) are those appearing in \(\pi\), and an edge appears in \(G\) as many times as
    it does in \(\pi\).  Note that in \(G\), the in- and out-degrees of any vertex are
    equal, but for the start and end vertices of \(\pi\).

    We first note that Point 4 is true of any subpath \(\pi'\) that satisfies Point~1.
    Indeed, removing \(\pi'\) from \(G\) turns all the vertices into vertices with same
    in- and out-degrees.

    We build \(\pi'\) iteratively.  We first let \(\pi'\) be a shortest path from the
    starting vertex of \(\pi\) to its final vertex in \(G\); since it does not repeat
    any node in \(V\), its thickness is bounded by \(|Q|\).

    Now if \(\pi'\) visits all the vertices in \(V'\), we are done.  Otherwise, let \(v\)
    be a vertex in \(V'\) that \(\pi'\) does not visit; we augment \(\pi'\) with a cycle
    that includes \(v\) as follows.  Consider any shortest path from the start vertex of $\pi$ to \(v\) in
    \(G\), and let \(u\) be the last vertex of that path that appears in \(\pi'\).  Write
    \(\rho\) for the path from \(u\) to \(v\).  Since
    \(\pi - \pi'\) is a union of cycles, there is a path \(\rho'\) from \(v\) to \(u\) in \(\pi -
    \pi'\) (more details follow).  We can thus augment \(\pi'\) with the path \(\rho\rho'\) rooted at \(u\), potentially
    increasing the thickness of \(\pi'\) by \(2|Q|\).

    (In more detail, to find the path \(\rho'\), we argue as follows.
    The set of edges of \(\pi - \pi'\) forms an Eulerian multigraph,
    and so in \(\pi - \pi' - \rho\) the difference between outdegree
    and indegree is \(1\) for \(v\), \(-1\) for \(v\), and \(0\) for all
    other vertices. Therefore, constructing a walk edge by edge, starting
    from \(v\), while possible, will necessarily lead to a dead end
    at the vertex \(u\). Removing cycles from this walk will give
    a path \(\rho'\) from \(v\) to \(u\), as required.)
  \end{proof}
\else
  \begin{proof}[Proof (sketch)]
    We first consider a shortest subpath \(\pi'\) of \(\pi\) from the initial to the final
    vertices of \(\pi\)---this implies that \(\pi'\) has thickness at most
    \(|Q|\).  We then treat each missing vertex from \(V'\) in turn, and add to
    \(\pi'\) a subpath from \(\pi\) that is a cycle and includes that vertex.  Each of
    these iterations can augment the thickness of \(\pi'\) by at most \(2|Q|\).
  \end{proof}
\fi

\begin{corollary}\label{cor:decomp}
  Let \(\calA\) be an automaton over \(\BS(1, q)\) with state set \(Q\), and let
  \(k = |Q|+2|Q|^2\).  Any run of \(\calA\) is equivalent to a run in
  \(\runs[k]{\calA}\) on which, for each state \(p\) appearing in the run, cycles
  from \(\leftruns[k][p\to p]{\calA}\) are inserted at an occurrence of \(p\) with
  smallest position.

  Conversely, any run built by taking a run in \(\runs[k]{\calA}\) and inserting
  cycles from \(\leftruns[k][p\to p]{\calA}\) at an occurrence of \(p\) is a run of
  \(\calA\).
\end{corollary}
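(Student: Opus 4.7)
The plan hinges on an application of \Cref{lem:graph} to the position path $\pi$ of $\rho$. For each state $p$ appearing in $\rho$, let $m_p$ denote the smallest position at which $p$ is visited. Setting $V' = \{(p, m_p) : p \text{ appears in } \rho\}$, we have $|V'| \le |Q|$, and the lemma provides a subpath $\pi'$ of thickness at most $|Q|(1 + 2|Q|) = k$ whose complement $\pi - \pi'$ decomposes as a disjoint union of cycles attached at vertices of $\pi'$. The subpath $\pi'$ naturally realises a run $\rho' \in \runs[k]{\calA}$ starting and ending as $\rho$ does, and by the choice of $V'$, $\rho'$ visits each state $p$ at its smallest position $m_p$ in $\rho$.

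The next step for the forward direction is to re-express each residual cycle as a returning-left cycle inserted at a smallest-position occurrence. Given a residual cycle $c$ attached at vertex $(p, n)$ with $[c] = (r_c, 0)$, I rotate $c$ so that it begins at one of its lowest-position vertices $(p^*, n^*)$; the rotated cycle $c'$ is returning-left, and a direct computation of the $q$-adic scaling yields $[c'] = (q^{n - n^*} r_c, 0)$. Inserting $c'$ at an occurrence of $p^*$ at position $n^*$ in $\rho'$ would contribute $q^{n^*} \cdot q^{n - n^*} r_c = q^n r_c$ to the left component of the overall production, exactly matching the contribution of the original $c$ in $\rho$.

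The main obstacle I foresee is ensuring that the target insertion site $(p^*, n^*)$ is actually a smallest-position occurrence of $p^*$ in $\rho'$, i.e.\ that $n^* = m_{p^*}$. This is not automatic, since $p^*$ may appear in $\rho$ at positions strictly below $n^*$, outside the cycle $c$. Resolving this cleanly while preserving the thickness bound $k = |Q| + 2|Q|^2$ is the delicate point of the proof, and will likely involve either a careful choice of which rotation vertex to use within each cycle, or absorbing the $q$-adic discrepancy into an additional returning-left cycle inserted at the true smallest-position occurrence of $p^*$.

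The converse direction is immediate: a returning-left cycle from $\leftruns[k][p \to p]{\calA}$ inserted at an occurrence of $p$ in a run of $\calA$ always yields a valid run, since the concatenation of compatible edge sequences is itself a run, and the returning-left property guarantees that no position below the insertion point is visited during the inserted cycle.
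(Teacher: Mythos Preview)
Your Step~1 matches the paper exactly, and the rotation-to-returning-left computation is correct. However, there is a gap you did not flag: the residual cycles produced by \cref{lem:graph} carry no thickness bound whatsoever. Rotating a cycle $c$ to start at its lowest vertex makes it returning-left but does nothing to reduce its thickness, so the rotated cycle $c'$ need not lie in $\leftruns[k][p^*\to p^*]{\calA}$. The paper handles this with an iterative thinning procedure: if a residual cycle $\beta$ has $\thickness(\beta)>k$, then some position is visited more than $k>|Q|$ times, so a state repeats at that position and $\beta$ contains a strict subcycle $\beta'$ with $\thickness(\beta')\le k$. One extracts $\beta'$, deals with it, and repeats on the remainder $\beta\setminus\beta'$ (which is still a cycle). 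This is the missing ingredient.

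As for the obstacle you did identify --- that $n^*$ need not equal $m_{p^*}$ --- the resolution is simpler than you suggest. Since $m_{p^*}\le n^*$, set $d=n^*-m_{p^*}\ge 0$. Inserting $c'$ at the occurrence of $p^*$ at position $m_{p^*}$ (which exists in $\rho'$ by construction of $V'$) contributes $q^{m_{p^*}}\cdot q^{n-n^*}r_c$, which is off by a factor $q^{-d}$. To compensate, insert $q^d$ copies of $c'$ rather than one; each copy is the same $k$-thin returning-left cycle, so this is consistent with the statement (which allows inserting several cycles at the same site). In the paper, these two fixes are carried out together: from a too-thick cycle one extracts a thin subcycle, rotates it, and inserts $q^d$ copies at the true smallest-position occurrence; then one iterates on what remains.
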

\begin{proof}
  The converse is clear, 
  we thus focus on the first direction.

  \emph{(Step 1: Decomposing a run into a thin run and cycles.)}\quad Let
  \(\rho \in \runs{\calA}\), and extract from it a position path \(\pi = \pi_0\cdots\pi_{|\rho|}\) as follows.  We
  let, \(\pi_0 = (q_0, 0)\) and for all \(i \geq 1\):
  \[\pi_i = (p, n) \text{ where } \rho_i = (\cdot, \cdot, p) \text{ and } n = \pos{\rho_1\cdots
    \rho_i}\enspace.\]%
  For each state \(p\) visited by \(\rho\), let
  \(n_p= \min \{n \mid \text{there exists $i$ such that $\pi_i = (p, n)$}\}\); in
  words, \(n_p\) is the smallest final position of a prefix of \(\rho\) ending in
  \(p\).  Using \(V' = \{(p, n_p) \mid \rho \text{ visits } p\}\), 
  \cref{lem:graph} provides a position path \(\pi'\) of thickness
  \(\leq k = |Q|+2|Q|^2\) visiting all of \(V'\).

  From \(\pi'\), we can obtain the corresponding subpath \(\rho'\) of \(\rho\) that has the same
  starting and ending state and positions as \(\rho\), and such that \(\rho\) is made of
  \(\rho'\) onto which cycles are added.  The thickness of \(\rho'\) is bounded by \(k\), but
  the cycles can be of any thickness.

  \emph{(Step 2: Thinning the cycles.)}\quad Consider a cycle \(\beta\) that gets added to
  \(\rho'\) to form \(\rho\), say at position \(i\)
  (after initial \(i\) moves, \(\rho'_1 \cdots \rho'_i\)),
  and assume that
  \(\thickness(\beta) > k\).  Since a position is repeated more than \(k > |Q|\) times,
  there is a cycle \(\beta'\) \emph{within} \(\beta\) with
  \(\thickness(\beta') \leq k\); write then
  \(\beta = \alpha\cdot\beta'\cdot\alpha'\).  Let \(p\) be the state in
  \(\beta'\) that has the smallest position, that is, \(p\) is the ending state of
  the prefix \(\gamma\) of \(\beta'\) with final position \(\pmin(\beta')\); write
  \(\beta' = \gamma\cdot\gamma'\).  By definition, we have
  \(\pos{\rho'_1\cdots\rho'_i\alpha\gamma} \geq n_p\).  Note that
  \(\gamma'\cdot\gamma\) is in \(\leftruns[k][p\to p]{\calA}\).  We now remove
  \(\beta'\) from \(\beta\) and then insert \(\gamma'\cdot\gamma\) at the position
  \(j\) in \(\rho'\) that is such that \(\rho'_1\cdots\rho'_j\) ends in \(p\) with final position
  \(n_p\).  For the contribution of \(\gamma'\cdot\gamma\) to be the same as that of
  \(\beta'\) in the original path, we insert it
  \(q^d\) times, where
  \( d = \pos{\rho'_1\cdots\rho'_i\alpha\gamma} - n_p \).

  This shows that if any cycle added to \(\rho'\) is of thickness \(> k\), then a
  subcycle of it can be moved to another position of \(\rho'\) as a returning-left
  cycle.  Iterating this process, all the cycles added to \(\rho'\) will thus be of
  thickness \(\leq k\).  Moreover, if an added cycle \(\beta\) is not returning-left after
  these operations,
  or if it does not sit at an occurrence of its initial state with \emph{smallest} position,
  this means that we can decompose it just as above as
  \(\gamma\cdot\gamma'\), with \(\gamma\) reaching \(\pmin(\beta)\), and move
  \(\gamma'\cdot\gamma\), a returning-left cycle, to an appropriate position in \(\rho'\) as before.
\end{proof}

\secorpar{Intermezzo: reflecting on \Cref{cor:decomp}}\label{par:intz}

Before we continue with the
proof, we want to illustrate how crucial the previous
\lcnamecref{cor:decomp} is. \Cref{lem:graph} tells us that we can
obtain every run from a thin run by then adding cycles.  This already
simplifies the structure of $\runs{\calA}$: indeed, inserting cycles
at a certain position in a run~$\rho \in \runs{\calA}$ corresponds (in
algebraic terms) to adding to $[\rho]$ a subset of $\Z[\tfrac{1}{q}]$
closed under addition, i.e., a submonoid.  (Closure under addition
follows from the observation that any two returning cycles from each
$\retruns[k][p\to p]{\calA}$ can be concatenated.)

Sometimes one can conclude that every submonoid of a monoid has a
simple structure.  For example, every submonoid $M$ of $\Z$ is
semilinear and hence a PE-regular subset of $\Z[\frac{1}{q}]$.
Unfortunately, the situation in $\Z[\tfrac{1}{q}]$ is not as simple as
in $\Z$:
\iflongversion
  \begin{fact}\label{uncountable-submonoids}
    The group $\Z[\tfrac{1}{q}]$ has uncountably many submonoids.
  \end{fact}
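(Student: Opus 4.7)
The plan is to exhibit an injection from $\R_{\geq 0}$ into the set of submonoids of $(\Z[\tfrac{1}{q}], +)$, which suffices since $|\R_{\geq 0}| = 2^{\aleph_0}$. For each real $\alpha \geq 0$, I would define the ``upper cone''
\[ M_\alpha \;=\; \{0\} \;\cup\; \{x \in \Z[\tfrac{1}{q}] : x \geq \alpha\}. \]
The underlying idea is that a real-valued threshold can separate elements of a countable dense subset of $\R$, so that distinct thresholds cut out distinct subsets of $\Z[\tfrac{1}{q}]$, even though $\Z[\tfrac{1}{q}]$ itself is countable.

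First I would check that each $M_\alpha$ is a submonoid. It contains the neutral element $0$ by construction, and it is closed under addition: for $x, y \geq \alpha \geq 0$ we have $x + y \geq 2\alpha \geq \alpha$, while sums involving $0$ are trivially in $M_\alpha$. No subtraction is needed, which is why these sets are generally not subgroups.

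The only real content is pairwise distinctness, which I would derive from the density of $\Z[\tfrac{1}{q}]$ in $\R$: for any $0 \leq \alpha < \beta$ one picks $k \in \N$ with $q^{-k} < \beta - \alpha$ and then chooses $n \in \Z$ so that $x := n q^{-k} \in [\alpha, \beta)$; such an $x$ witnesses $M_\alpha \neq M_\beta$ since $x \in M_\alpha \setminus M_\beta$. Hence $\alpha \mapsto M_\alpha$ is injective on $\R_{\geq 0}$, giving uncountably many submonoids. I do not anticipate a genuine obstacle; the statement serves as a ``warning'' that submonoids of $\Z[\tfrac{1}{q}]$, unlike submonoids of $\Z$, admit no simple semilinear-type classification, motivating the more refined analysis of returning-left cycles used in the subsequent arguments.
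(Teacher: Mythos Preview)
Your argument is correct and in fact cleaner than the paper's. The paper constructs, for each function $f\colon\N\to\Z$ satisfying $f(0)=0$ and $q\cdot f(i)-1\le f(i+1)\le q\cdot f(i)$, the set $M_f=\{n/q^i \mid n\ge f(i)\}$, verifies that each $M_f$ is a submonoid, and shows that $f\mapsto M_f$ is injective; uncountability then comes from the binary tree of choices for $f$. Your upper-cone construction $M_\alpha=\{0\}\cup\{x\in\Z[1/q]:x\ge\alpha\}$ is essentially the special case where the threshold at each scale is governed by a single real number, and density of $\Z[1/q]$ (for $q\ge 2$) replaces the bookkeeping with $f$. What the paper's version buys is a slightly more explicit ``digit-by-digit'' picture of how wild these submonoids can look; what yours buys is a one-line verification of both the submonoid property and injectivity.

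One tiny edge case: when $\alpha=0<\beta$, your chosen $x\in[\alpha,\beta)$ could be $0$, which lies in $M_\beta$ by fiat. Just take $x\in(0,\beta)\cap\Z[1/q]$ in that case; density still provides such an $x$.
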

  \begin{proof}
    Let $q\ge 2$. Consider the functions $f\colon\N\to\Z$ that satisfy $f(0)=0$ and
    \[ q\cdot f(i)-1 \le f(i+1)\le q\cdot f(i) \]
    for every $i\ge 1$. Note
    that there are uncountably many such functions $f$: One can
    successively choose $f(1), f(2), f(3),\ldots$ and has two options
    for each value. Consider the set
    \[ M_f=\left\{\left.\frac{n}{q^i} ~\right|~ n\ge f(i)\right\}.\]
    We claim that for any
    $n,i\in\N$, we have $\tfrac{n}{q^i}\in M_f$ iff $n\ge f(i)$.  (In
    other words, it cannot happen that $\frac{n}{q^i}$ can be
    represented as $\frac{m}{q^j}$ such that $n\ge f(i)$ but not
    $m\ge f(j)$.) For this, we have to show that $n\ge f(i)$ if and only
    if $qn\ge f(i+1)$. But if $n\ge f(i)$, then
    $qn\ge q\cdot f(i)\ge f(i+1)$ by choice of $f$. Conversely, if
    $qn\ge f(i+1)$, then $n\ge \tfrac{1}{q}f(i+1)\ge f(i)-\tfrac{1}{q}$,
    which implies $n\ge f(i)$ because $n$ and $f(i)$ are integers.  This
    proves the claim.

    The claim implies that $M_f$ is a submonoid of
    $\Z[\tfrac{1}{q}]$: For $\frac{n}{q^i},\frac{m}{q^j}\in M_f$ with
    $i\le j$, we have
    $\tfrac{n}{q^i}+\frac{m}{q^j}=\frac{q^{j-i}n+m}{q^j}$ and since
    $m\ge f(j)$, we clearly also have $q^{j-i}n+m\ge f(j)$ and thus
    $\tfrac{n}{q^i}+\frac{m}{q^j}\in M_f$. Moreover, since $f(0)=0$, we
    have $0=\tfrac{0}{q^0}\in M_f$.

    Finally, the claim implies that the mapping $f\mapsto M_f$ is injective: Determining
    $f(i)$ amounts to finding the smallest $n\in\N$ with $\tfrac{n}{q^i}\in M_f$.
  \end{proof}
\else
  One can show that
  $\Z[\tfrac{1}{q}]$ has uncountably many submonoids.
\fi
Thus, $\Z[\tfrac{1}{q}]$ has submonoids with undecidable membership problem;
moreover, there is no hope for a finite description for every submonoid as in
$\Z$.  Thus, we need to look at our specific submonoids.  A simple observation
similar to \cref{lem:graph} allows us to obtain every run from a thin part by
adding \emph{thin} cycles. Hence, the submonoids that we add are of the form
$[\retruns[k][p\to p]{\calA}]^*$.  It is not hard to show (see \cref{lem:thinreg})
that $[\retruns[k][p\to p]{\calA}]$ is always a PE-regular set. Thus, one may hope
to prove that the regularity of $[\retruns[k][p\to p]{\calA}]$ implies regularity
of $[\retruns[k][p\to p]{\calA}]^*$. (This was an approach to rational subset
membership proposed by the third author of this work in \cite[Section
4.7]{DBLP:journals/dagstuhl-reports/DiekertKLM19}.) However,
\cref{non-closure-iteration} tells us that even for PE-regular
$R\subseteq\BS(1,q)$, the set $R^*$ may not be PE-regular.

Therefore, \Cref{cor:decomp} is the key insight of our proof. It says that a run
can be decomposed into a thin part and thin \emph{returning-left} cycles. Since
returning-left cycles produce integers, this will lead us to submonoids of $\Z$.

\secorpar{Sets of thin runs are PE-regular}\label{par:thinreg}

For the proof of that statement, we rely on the following result.  It is a
classical exercise to show that automata can compute the addition of numbers in
a given base. We rely on a slight extension: Using the base-\(q\)
\emph{signed-digit expansion} of integers, addition is computable by an
automaton:
\begin{lemma}[{\cite[Section 2.2.2.2]{cant10}}]\label{lem:sum}
  Let \(q \geq 2\) and \(B_q = \{-(q-1), \ldots, q-1\}\).  Words in
  \(B_q^{\;*}\) are interpreted as integers in base~\(q\).  The language of words
  over \(B_q \times B_q \times B_q\) such that the third component is the sum of the first
  two components is regular.  There is an automaton of size polynomial in \(q\)
  for that language.
\end{lemma}

\begin{restatable}{lemma}{thinreg}\label{lem:thinreg}
  Let \(\calA\) be an automaton over \(\BS(1,q)\), \(p, p'\) be states of \(\calA\), and
  \(k > 0\).  The sets \(\sv(\runs[k][p\to p']{\calA}), \sv(\retruns[k][p\to p']{\calA}),\) and
  \(\sv(\leftruns[k][p\to p']{\calA})\) are effectively regular.
\end{restatable}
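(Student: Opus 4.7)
The plan is to construct directly a finite automaton \(\calA_k\) that recognizes \(\sv(\runs[k][p\to p']{\calA})\) by a position-by-position simulation in the spirit of a crossing-sequence argument. Two boundedness observations drive the construction. First, in a \(k\)-thin run each position hosts at most \(k\) transitions of \(\calA\) (by the definition of thickness), so the net \(a^{\pm 1}\)-contribution \(c_n\) at position \(n\) satisfies \(|c_n|\le k\). Second, the number of times the cursor crosses each boundary between consecutive positions is at most \(2k\). Hence both the contribution at a position and the crossing sequence at a boundary are finite objects of bounded size.

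Reading the state view from right to left, the state of \(\calA_k\) stores two things: (a)~the crossing sequence at the boundary about to be traversed, i.e., an ordered list of at most \(2k\) pairs \((s,d)\) with \(s\in Q\) a state of \(\calA\) and \(d\in\{\uparrow,\downarrow\}\) a direction, recording each cursor crossing; and (b)~an integer carry \(c_{\mathrm{in}}\) for the digit-level arithmetic, which remains bounded by a constant depending only on \(k\) and \(q\) thanks to the recurrence \(c_n + c_{\mathrm{in}} = a_n + q\cdot c_{\mathrm{out}}\) used to update it (in the spirit of \cref{lem:sum}).

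At position \(n\), upon reading \(a_n P_n\), the automaton nondeterministically guesses the sequence of visits at \(n\): their number \(v_n\le k\), their temporal order, and for each visit its entering state, exiting state, exit direction, and contribution \(c_{n,j}\in[-k,k]\). It then verifies: \emph{(i) local feasibility} --- each visit is realizable by a computation of \(\calA\) using only \(a^{\pm 1}\) transitions from its entering to its exiting state with exactly the prescribed contribution (precomputable as a finite table, since the data are bounded); \emph{(ii) crossing-sequence consistency} --- the entries and exits of the visits match the incoming crossing sequence and produce the outgoing one; \emph{(iii) \(P_n\)-consistency} --- the set of states appearing across the visits matches the subset of \(Q\) encoded by \(P_n\); \emph{(iv) arithmetic consistency} --- \(c_n + c_{\mathrm{in}} = a_n + q\cdot c_{\mathrm{out}}\) with \(c_n = \sum_j c_{n,j}\). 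Boundary conditions at the two ends of the input are treated by forcing the crossing sequence there to be empty; the \(\bullet\) marker (at position \(0\)) and the \(\triangleleft\) marker (at the final cursor position) are used to register that the run starts in state \(p\) at the \(\bullet\) position and ends in state \(p'\) at the \(\triangleleft\) position, and the overall sign \(\pm\) is tracked similarly. The variants \(\sv(\retruns[k][p\to p']{\calA})\) and \(\sv(\leftruns[k][p\to p']{\calA})\) are then obtained by the additional local constraints that \(\triangleleft\) coincide with \(\bullet\) (final position \(0\)) and that no position lie to the right of \(\bullet\) (no digits after the radix point, forcing \(\pmin\ge 0\)), respectively.

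The main technical obstacle is the bookkeeping at each position: matching the incoming crossings to visit-entries, choosing a valid temporal order for the visits, and producing a consistent outgoing crossing sequence while simultaneously enforcing \(P_n\)-consistency and the digit arithmetic. All the data involved are finitely bounded in \(|Q|\), \(k\) and \(q\), so the construction yields a finite automaton, effectively computable from \(\calA\), \(p\), \(p'\), and \(k\); this gives the desired effective regularity.
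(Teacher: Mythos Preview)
Your approach is essentially the paper's: both are crossing-sequence arguments in the style of Shepherdson's two-way-to-one-way conversion, using $k$-thinness to bound the data carried at each position. The paper does it in two stages---first a one-way automaton outputting a $k$-tuple of signed digits per position, then \cref{lem:sum} to collapse them into a single base-$q$ digit---whereas you fold the arithmetic into the simulation via an inline carry. That difference is cosmetic.

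There is, however, a genuine gap in your boundary handling. You force the crossing sequence to be empty at both ends of the input. But the input is $\sv(\rho)$, whose positional extent is exactly the range $[-k_2,k_1]$ of $\pe([\rho])$, and a $k$-thin run can visit positions \emph{outside} that range whenever the contributions there cancel to zero digits. Concretely, for $k\ge q$, a run contributing $+q$ at position $0$ and $-1$ at position $1$ produces $(0,0)$; then $\sv(\rho)$ covers only position $0$, yet the crossing sequence at the $0/1$ boundary is nonempty, and your automaton would wrongly reject. The fix is either to (a)~accept at each end any (crossing sequence, carry) pair that can be completed by a $k$-thin excursion beyond the input producing only zero digits and eventually terminating---a finite, precomputable set since both the crossing sequence and the carry are bounded---or (b)~first build your automaton over \emph{noncanonical} state views covering all visited positions (where the ends really do have empty crossing sequences), and then trim leading and trailing zero-digit blocks by a regular post-processing step. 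Route~(b) is essentially how the paper proceeds. A smaller omission: verifying $k$-thinness needs the total number of transitions ending at position $n$, which counts every individual $a^{\pm 1}$ step, not just the net contribution $c_{n,j}$; this is still bounded data, but should be tracked explicitly.
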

\iflongversion
  \begin{proof}
    For simplicity, we deal with \emph{pointed expansions} of
    productions of runs, and indicate the easy changes that need to be
    made to deal with \emph{state views} of runs at the end of the
    proof.  As we draw intuition from two-way automata, we will assume
    that the positions along a run are always changing.  This is easily
    implemented by changing the alphabet to
    \(\Sigma = \{-1, 0, 1\} \times \{-1, 1\}\), and introducing
    intermediate states when translating \((1, 0)\) to, say,
    \((1, 1)(0, -1)\). This modification can turn runs that are $k$-thin
    into runs that are $2k$-thin: In addition to the $k$ state occurrences
    from the old run, one also sees at most $k$ state occurrences resulting
    from non-moving transitions one position to the right. This, however, is
    not an issue: We perform the construction below for thickness $2k$. Then it
    is obvious from our construction that it can be adapted to only capture
    those $2k$-thin runs in which each original state occurs at most $k$ times
    in each position.

    We will prove the statement in two steps.  First, we will convert \(\calA\) into
    an automaton that reads \(k\)-tuples of letters from \(\{-1, 0, 1\}\).%
    Each
    component corresponds to one of the ``threads''
    of a run of \(\calA\) at a given position in the input.  Second, we
    apply \Cref{lem:sum} to conclude that, based on the regular language over
    \(\{-1, 0, 1\}^k\) accepted by this new automaton, we can compute the componentwise sum in
    \(\Z[\tfrac{1}{q}]\).

    \emph{(Step 1: From \(\calA\) to \(k\)-component regular language.)}\quad This is akin
    to the classical proof~\cite{shepherdson1959reduction}
    that deterministic two-way automata can be turned into
    nondeterministic one-way automata.  Indeed, since the runs we are interested
    in are \(k\)-thin, we can follow \(k\) partial executions of \(\calA\), half from
    left to right, and half from right to left, and check that the reversals of direction
    are consistent.

    In more detail, we will build a nondeterministic automaton \(\calB\), whose
    set of states is \((Q_\calA\times \{L, R\})^{\leq k}\) and alphabet is
    \(\{-1, 0, 1\}^{\leq k}\).  Each component of a given state follows a portion of a
    \(k\)-thin run; it is thus expected that the letters \(L\) and \(R\), standing for
    left and right, and specifying the direction of the partial run, alternate from component
    to component.

    We now specify the transition relation of \(\calB\).  Let \(X\) and \(Y\) be two
    states of \(\calB\) of the same size \(\ell \leq k\):
    \[X = ((p_1, d_1), \ldots, (p_\ell, d_\ell)), \quad Y = ((p_1', d_1'), \ldots, (p_\ell',
    d_\ell'))\enspace.\]
    We add a transition between \(X\) and \(Y\) labeled \((a_1, \ldots, a_\ell)\) if
    for all \(i\):
    \begin{itemize}
    \item \mbox{}\(d_i = d'_i\),
    \item if \(d_i = R\), then \((p_i, (a_i, -1), p_i')\) is an edge in \(\calA\), and
    \item if \(d_i = L\), then \((p'_i, (a_i, 1), p_i)\) is an edge in \(\calA\).
    \end{itemize}
    These transitions check the consistency of a single step.  We also add
    transitions that correspond to the initial and final transitions of runs from
    \(p\) to \(p'\) in \(\calA\) (1 and 2 below), and transitions that check reversals
    (3 and 4 below):
    \begin{enumerate}
    \item At any time, \(\calB\) can take a transition on \(\varepsilon\) that either inserts
      \((p, R)\) as the first component of the current state, or removes \((p, L)\) in
      that component;
    \item At any time, \(\calB\) can take a transition on \(\varepsilon\) that either inserts
      \((p', L)\) in the last component of the current state, or removes \((p', R)\) in
      that component;
    \item At any time, \(\calB\) can take a transition on \(\varepsilon\) that inserts two
      components \((r, L)\) and \((r, R)\) within the current state, consecutively,
      for any state \(r\);
    \item At any time, \(\calB\) can take a transition on \(\varepsilon\) that removes two
      consecutive components of the form \((r, R)\) and \((r, L)\) from the current
      state, for any state \(r\).
    \end{enumerate}
    Naturally, this is subject to the constraint that a state has at most \(k\)
    components.  Finally, we set the empty vector as the initial and final state.

    To obtain the desired automaton for \(\runs[k][p\to p']{\calA}\), we additionally
    modify \(\calB\) so that transitions of type 1 and 2 are taken exactly once.
    Moreover, in transition 1, if \((p, R)\) is inserted, then the next symbol read
    is annotated with \(\bullet\); if \((p, L)\) is removed, then the previous symbol read
    is annotated with \(\bullet\).  Similarly, transition 2 annotates the next or previous
    symbol read with \(\triangleleft\).

    The automata for \(\retruns[k][p\to p']{\calA}\) and
    \(\leftruns[k][p\to p']{\calA}\) are obtained by a regular constraint
    on~\(\calB\): a simulated run is returning if the symbol annotated with
    \(\bullet\) is also annotated with \(\triangleleft\), and it is returning-left if this is the last
    symbol.

    \emph{(Step 2: Computing the addition.)}\quad This is a simple application of
    \Cref{lem:sum}, noting that we can keep the annotations \(\bullet\) and \(\triangleleft\) as is.

    \emph{(From pointed expansions to state views.)}\quad  The automaton \(\calB\)
    above actually knows the states in which the different partial runs of \(\calA\)
    are; this is what is stored in \(\calB\)'s states.  The alphabet of \(\calB\) can
    thus be extended to \((\{-1, 0, 1\} \times Q)^{\leq k}\), in such a way that each digit
    carries the information of the state in which it was emitted.  Then Step 2
    can be changed to not only compute the addition, but also produce the collection
    of all these states.
  \end{proof}
\else
  \begin{proof}[Proof (sketch)]
    We see \(\calA\) as a two-way automaton, and apply a construction
    similar to the classical proof that two-way automata are no more
    expressive than one-way automata~\cite{shepherdson1959reduction}.
    This transforms \(\calA\) into a one-way automaton over the alphabet
    \(\{-1, 0, 1\}^k\), where each component tracks a \(1\)-thin partial
    run.  It is a classical exercise to show that automata can compute
    the addition of numbers in a given base; this can be extended to
    \emph{signed-digit} expansions, in which negative digits can be
    used~\cite[Section 2.2.2.2]{cant10}.  We thus rely on this to
    compute the sum, componentwise, of these partial runs.  Adding state
    information to that construction is straightforward, so that we
    obtain automata for state views.
  \end{proof}
\fi

\secorpar{Iterations of returning-left thin cycles are PE-regular}\label{par:itcyc}

It is well-known that for every set $S\subseteq\N$ the generated
monoid $S^* = \{ s_1 + \dots + s_m \mid s_1, \ldots, s_m \in S, m \ge 0 \}$
is eventually identical with $\gcd(S)\cdot\N$. In other
words, the set $(\gcd(S)\cdot\N)\setminus S^*$ is finite and we may
define $F(S)=\max ((\gcd(S)\cdot\N) \setminus S^*)$.  The number $F(S)$
is called the \emph{Frobenius number} of $S$.
With this, we have
$S^* = \{n\in S^* \mid n\le F(S)\}\cup \{n\in\gcd(S)\cdot\N \mid n>F(S)\}$.
If $S\subseteq-\N$, then we set $F(S):=F(-S)$.
Now consider an arbitrary set $S \subseteq \Z$.
If $S$ contains both a
positive and a negative number, then $S^*=\gcd(S)\cdot\Z$ and we set
$F(S):=0$. We shall use the following well-known fact~\cite{Wilf78}.
\begin{lemma}\label{frobenius-finite-set}
  If $S=\{n_1,\ldots,n_k\}$ with $0 < n_1<\cdots< n_k$, then
  $F(S)\le n_k^2$.
\end{lemma}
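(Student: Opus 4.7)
The plan is to bound $F(S)$ by a residue-class analysis modulo $n_k$. I would begin with a standard reduction to the coprime case: setting $g = \gcd(S)$ and $S' = \{n_1/g, \ldots, n_k/g\}$, one has $S^* = g \cdot (S')^*$ and $\gcd(S') = 1$, hence $F(S) = g \cdot F(S')$. Therefore it suffices to prove $F(S') \le (n_k/g)^2$, since then $F(S) \le g \cdot (n_k/g)^2 = n_k^2/g \le n_k^2$.

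From now on assume $\gcd(S) = 1$. For each residue $r \in \{0, \ldots, n_k - 1\}$, let $m_r$ be the smallest element of $S^*$ congruent to $r$ modulo $n_k$ (the set $\{m_0, \ldots, m_{n_k - 1}\}$ is the Ap\'ery set of $S^*$ with respect to $n_k$). Each $m_r$ exists because $\gcd(S) = 1$ forces the subgroup of $\Z/n_k\Z$ generated by the residues of $n_1, \ldots, n_{k-1}$ to be all of $\Z/n_k\Z$. Every integer at least $m_r$ in residue class $r$ then lies in $S^*$ (add copies of $n_k$), so $F(S) \le \max_r m_r - n_k$, and the entire task reduces to bounding $\max_r m_r$.

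The key step is a Cayley-graph argument. I would consider the directed Cayley graph on $\Z/n_k\Z$ with generator set $\{n_1 \bmod n_k, \ldots, n_{k-1} \bmod n_k\}$. Since in a finite abelian group the semigroup generated by a set coincides with the subgroup it generates, the subgroup-generation fact above implies that this digraph is strongly connected on $n_k$ vertices. Hence a BFS from $0$ reaches every vertex in at most $n_k - 1$ steps: for each $r$ there exist non-negative integers $c_1, \ldots, c_{k-1}$ with $\sum_i c_i \le n_k - 1$ and $\sum_i c_i n_i \equiv r \pmod{n_k}$. The lifted element $\sum_i c_i n_i \in S^*$ has value at most $(n_k - 1)\, n_{k-1} \le (n_k - 1)^2$, so $m_r \le (n_k - 1)^2$ and $F(S) \le (n_k - 1)^2 - n_k < n_k^2$.

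I do not expect a real obstacle; all three steps are elementary. The most delicate point is the BFS step, where it is important to count non-negative combinations rather than integer combinations (so Bezout is not enough on its own) and to use strong connectedness of the directed Cayley graph on $\Z/n_k\Z$. If one only needs the special case $\gcd(n_1, n_k) = 1$, the Cayley-graph step can be replaced by a pigeonhole on the sequence $0, n_1, 2n_1, \ldots, (n_k-1)n_1$ modulo $n_k$, but the argument above handles the general case uniformly.
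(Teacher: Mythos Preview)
Your argument is correct. The reduction to $\gcd(S)=1$, the Ap\'ery-set identity $F(S)=\max_r m_r - n_k$, and the Cayley-graph bound on $\max_r m_r$ are all standard and sound; the only point worth watching is that ``semigroup generated equals subgroup generated'' really does use finiteness of $\Z/n_k\Z$, which you state explicitly.

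There is nothing to compare against: the paper does not prove this lemma but quotes it as a well-known fact, with a reference to Wilf~\cite{Wilf78}. Your write-up therefore supplies a self-contained proof where the paper gives none. If anything, your bound is slightly sharper than what is needed, since you obtain $F(S)\le (n_k-1)^2 - n_k < n_k^2$.
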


\begin{lemma}\label{lem:star-left-runs}
  For every automaton $\calA$ over $\BS(1,q)$, the language
  $\pe([\leftruns[k][p\to p]{\calA}]^*)$ is effectively regular.
\end{lemma}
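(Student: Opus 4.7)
The plan is to exploit the fact that every returning-left run ends at position \(0\) and visits only nonnegative positions, so its production lies in \(\Z\). Applying \Cref{lem:thinreg} to \(\leftruns[k][p\to p]{\calA}\), the language \(L_0 := \pe([\leftruns[k][p\to p]{\calA}])\) is effectively regular; writing \(S := [\leftruns[k][p\to p]{\calA}] \subseteq \Z\), the task reduces to showing that \(\pe(S^*)\) is effectively regular.

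I would exploit the well-known structure of submonoids of \(\Z\). First, I would effectively compute \(d := \gcd(S)\) from \(L_0\): a nonzero witness \(s_0 \in S\) is obtained as a shortest word of \(L_0\) representing a nonzero integer, and then \(d\) is the largest divisor of \(|s_0|\) for which \(L_0 \subseteq \pe(d\Z)\) holds (divisibility by \(d\) in base \(q\) is recognised by a DFA, so each inclusion test is effective). Next I would check by regular intersections whether \(S\) contains positive elements, negative elements, or both. In the two-sign case, I would enumerate \(L_0\) until a finite \(S' \subseteq S\) containing elements of both signs and with \(\gcd(S') = d\) has been exhibited; a standard Bezout argument then yields \(S'^* = d\Z\), and hence \(S^* = d\Z\), which is PE-regular. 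When \(S \subseteq \N\) (the case \(S \subseteq -\N\) is symmetric via negation), I would similarly extract a finite \(S' \subseteq S\) with \(\gcd(S') = d\), set \(B := (\max S')^2\), and invoke \Cref{frobenius-finite-set} to conclude that \(\{n \in d\N : n > B\} \subseteq S'^* \subseteq S^*\). For the bounded remainder, the key observation is that \(S^* \cap [0, B] = (S \cap [0, B])^* \cap [0, B]\), which is effectively computable since \(S \cap [0, B]\) is a finite set extracted from \(L_0\) by intersection with an appropriate regular constraint.

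Putting these pieces together, \(S^*\) is always a finite modification of an arithmetic progression inside \(d\Z\), whose pointed expansions form an effectively constructible regular language. The chief obstacle I anticipate is not conceptual but rather the bookkeeping: one must ensure that each enumeration (of \(s_0\), of \(S'\)) terminates, which follows from the fact that any strictly decreasing sequence of positive integers must eventually stabilise. Beyond \Cref{lem:thinreg} and \Cref{frobenius-finite-set}, no new machinery is needed.
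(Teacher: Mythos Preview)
Your proposal is correct and follows essentially the same approach as the paper: both arguments use \Cref{lem:thinreg} to get decidable membership in $S$, compute $\gcd(S)$ by testing divisibility against a single nonzero element of $S$, extract a finite subset with the same gcd to bound the Frobenius number via \Cref{frobenius-finite-set}, and split $S^*$ into a computable finite part below $B$ and the arithmetic progression $\gcd(S)\cdot\N$ above $B$. The only cosmetic differences are that the paper computes $\gcd(S)$ prime-by-prime rather than by checking all divisors, and builds the finite witness set explicitly rather than by enumeration; neither affects the argument.
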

\begin{proof}
  Recall that we identify each $r\in\Z[\tfrac{1}{q}]$ with $(r,0)\in\Z[\tfrac{1}{q}]$.
  In particular, for $n\in\Z$, $\pe(n)$ is the same as $\pe((n,0))$.
  
  Denote $S=[\leftruns[k][p\to p]{\calA}]$. We first consider the case
  $S\subseteq\N$ and $S\ne\emptyset$.  Suppose we can compute $\gcd(S)$ and a bound
  $B\in\N$ with $B\ge F(S)$. Then we have
  \begin{equation}
    S^* = \underbrace{\{ n\in S^* \mid n\le B\}}_{=:X} ~\cup~
    \underbrace{\{n\in\gcd(S)\cdot \N \mid n>B\}}_{=:Y} \label{star-decomposition}
  \end{equation}
  and it
  suffices to show that $\pe(X)$ and $\pe(Y)$ are effectively
  regular. Note that $X$ is finite and can be computed by finding all $n\le B$ 
  with $n\in S$ (recall that membership in $S$ is decidable
  because $\sv(\leftruns[k][p\to p]{\calA})$ is effectively regular by \cref{lem:thinreg}) and
  building sums. Moreover, $\pe(Y)$ is regular because the set
  $L_0=\pe(\gcd(S)\cdot\N)$ is effectively regular and so is
  $L_1=\{\pe(n) \mid n\in\N,~n>B\}$, and hence $\pe(Y)=L_0\cap L_1$.
  
  Thus, it remains to compute $\gcd(S)$ and some $B\ge F(S)$. For
  the former, find any $r\in S$ and consider its decomposition
  $r=p_1^{e_1}\cdots p_m^{e_m}$ into prime powers. For each
  $i\in[1,m]$, we compute $d_i\in [0,e_i]$ and $n_i\in S$ such that
  (i)~$S\subseteq p_i^{d_i}\cdot \N$, and (ii)~
  $n_i\in S\setminus p_i^{d_i+1}\cdot\N$.  Since for $d\in\N$, we can
  construct an automaton for $\pe(S\cap d\cdot\N)$, these $d_i$ and
  $n_i$ can be computed. Observe that
  $\gcd(S)=p_1^{d_1}\cdots p_m^{d_m}$. Let
  $T=\{r,n_1,\ldots,n_k\}$. Observe that $\gcd(T)=\gcd(S)$, and hence
  $T^*$ and $S^*$ are ultimately identical. Since $T\subseteq S$, this
  means $F(S)\le F(T)$. By \cref{frobenius-finite-set}, we have
  $F(T)\le (\max\{r,n_1,\ldots,n_k\})^2$, which yields our bound $B$.

  The case $S\subseteq-\N$ is analogous to $S\subseteq\N$. If
  $S$ contains a positive and a negative number, then
  $S^*=\gcd(S)\cdot\Z$, so it suffices to just compute $\gcd(S)$. This
  is done as above.
  Finally, deciding between these three cases is easy. This completes the proof.
\end{proof}



\secorpar{Wrapping up: Proof of \cref{main-effective-regularity}}

Let \(\calA\) be an automaton over \(\BS(1, q)\) with state
set~\(Q\). \Cref{cor:decomp} indicates that the set of productions of
accepting runs is the same as the set of productions of \(k\)-thin runs in
which thin cycles are introduced.

By \Cref{lem:thinreg}, \(\sv(\runs[k]{\calA})\) is a regular language \(L\).
For any state \(p\) of \(\calA\), let \(L_p = \pe\big([\leftruns[k][p\to
p]{\calA}]^*\big)\), a regular language by \cref{lem:star-left-runs}.  For
padding purposes, let \(s \in Q\) be some state, and let \(h\) be the morphism from \((\Phi_q
\cup \{\pm\})^*\) to \((\Phi_q \cup Q \cup \{\pm\})^*\) defined, for any \(a \in \Phi_q\), by \(h(a)
= as^{|Q|}\), and \(h(+) = +\), \(h(-)=-\).  Define now \(L_p'\) to be the image by \(h\) of
the version of \(L_p\) where arbitrary \(0\)'s are added after the sign, and at
the end of the number (these 0's do not change the value represented).

Consider now the language \(R\) over the alphabet
\((\Phi_q \cup Q \cup \bar{Q} \cup \{\pm\})^{|Q| + 1}\) whose projection on the first
component is the language \(L\), and the other components correspond to the
languages \(L'_p\), for each \(p \in Q\).
The first component indicates in particular the states of
\(\calA\) that visited that location; to synchronize the different components of
\(R\), we ensure that the letter annotated with \(\bullet\) in \(L'_p\) is aligned with a
letter from \(L\) that is followed by \(p\)---that is, the starting position of
\(L'_p\) is at a position in \(L\) that is seen while being in the state \(p\).

Finally, an automaton can do the componentwise addition in base \(q\),
collapsing the \(|Q|+1\) components into a single one.
The radix point is given by the digit with \(\bullet\) of \(L\), i.e., in the first
component; and similarly for \(\triangleleft\).  The resulting language, thanks to
\Cref{cor:decomp}, is the language of the pointed expansions of all runs in
\(\runs{\calA}\).\hfill\qedsymbol

\section{Complexity}\label{complexity}

\iflongversion\else
  \subparagraph{Computing pointed expansions}
\fi
In this section, we prove
\cref{main-complexity,main-complexity-fixed}. 
For the upper bounds in
\cref{main-complexity,main-complexity-fixed}, we shall rely on the
fact that, given an element $g\in\Z[\tfrac{1}{q}]\rtimes\Z$ as a word
over $\Sigma=\{a,a^{-1},t,t^{-1}\}$, one can compute the
pointed expansion $\pe(g)$ in logarithmic space. This is a direct
consequence of a result of Elder, Elston, and
Ostheimer~\cite[Proposition~32]{ELDER2013260}. They show that given a
word $w$ over $\Sigma$, one can compute in logarithmic space an
equivalent word of one of the forms (i)~$t^i$,
(ii)~$(a^{\eta_0})^{t^{\alpha_0}}(a^{\eta_1})^{t^{\alpha_1}}\cdots
(a^{\eta_k})^{t^{\alpha_k}}t^i$ or
(iii)~$(a^{-\eta_0})^{t^{\alpha_0}}(a^{-\eta_1})^{t^{\alpha_1}}\cdots
(a^{-\eta_k})^{t^{\alpha_k}}t^i$, where $i\in\Z$, $k\in\N$,
$0<\eta_j<q$ for $j\in[0,k]$, and $\alpha_0>\cdots>\alpha_k$. Here,
$x^y$ stands for $y^{-1}xy$ in the group. Since these normal forms
denote the elements (i)~$(0,i)$,
(ii)~$(\sum_{j=0}^k \eta_jq^{-\alpha_j},i)$ and
(iii)~$(-\sum_{j=0}^k \eta_jq^{-\alpha_j},i)$, respectively, it is
easy to turn these normal forms into $\pe(w)$ using logarithmic
space.

This allows us to prove \cref{main-complexity-fixed}: For every
rational subset $R\subseteq\BS(1,q)$, the language $\pe(R)$ is a
regular language. In particular, there exists a deterministic
automaton $\calB$ for $\pe(R)$. Therefore, given $g\in\BS(1,q)$ as a
word over $\{a,a^{-1},t,t^{-1}\}$, we compute $\pe(g)$ in logspace and
then check membership of $\pe(g)$ in $\lang{\calB}$, which is
decidable in logarithmic space.

\newcommand{\varv}{\mathsf{v}}
\newcommand{\calS}{\mathcal{S}}

\iflongversion
  \subsection{PSPACE-hardness}
\else
  \subparagraph{PSPACE-completeness}
\fi
The $\PSPACE$ lower bound in
\cref{main-complexity} is a reduction from the intersection nonemptiness of finite-state
automata, a well-known
$\PSPACE$-complete problem~\cite{Kozen77}.
\iflongversion
  \begin{theorem}
    Rational subset membership is \(\PSPACE\)-hard.
  \end{theorem}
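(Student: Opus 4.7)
The plan is to reduce intersection nonemptiness of DFAs, a well-known $\PSPACE$-complete problem~\cite{Kozen77}, to rational subset membership in $\BS(1,q)$. Given DFAs $A_1, \ldots, A_n$ over a common alphabet $\Sigma$, with state sets $Q_i$ relabelled so that the initial state of each $A_i$ is $0$, I will construct in polynomial time an automaton $\calM$ over $\BS(1,q)$ such that the identity of $\BS(1,q)$ lies in $\lang{\calM}$ if and only if $\bigcap_i L(A_i) \neq \emptyset$.

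The idea is to have $\calM$ nondeterministically guess a word $w = c_1 c_2 \cdots c_m$ and simultaneously simulate every $A_i$ on $w$, using the counter of $\BS(1,q)$ to record each $A_i$'s state across all time steps. Set $\ell_i = \lceil \log_q |Q_i| \rceil$, $N = \sum_i \ell_i$, and $I_i = \sum_{i' < i} \ell_{i'}$; the block of counter positions $-(jN + I_i + 1), \ldots, -(jN + I_i + \ell_i)$ will hold the $q$-ary encoding of the state of $A_i$ after reading $c_1 \cdots c_j$. In round $j$, $\calM$ nondeterministically chooses $c_j \in \Sigma$ and, iterating over $i = 1, \ldots, n$, guesses the state $q_i \in Q_i$ of $A_i$ at time $j-1$; using $t^{\pm 1}$ it navigates to the block for $(A_i, j-1)$, subtracts the digits of $q_i$ via $a^{-d}$ operations, moves to the block for $(A_i, j)$, and adds the digits of $\delta_i(q_i, c_j)$ via $a^d$ operations. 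After an arbitrary number of rounds, an acceptance phase guesses $f_i \in F_i$ for each $i$ and subtracts its digits from the block for $(A_i, m)$. Finally, self-loops labelled $t$ and $t^{-1}$ on the accepting state let $\calM$ slide the cursor back to $0$ without needing to know $m$. The finite state of $\calM$ only tracks the phase, the DFA index $i$, the guessed state, the letter $c_j$, and the digit position within a block, giving a size polynomial in $\sum_i |Q_i| \cdot |\Sigma|$.

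For correctness, observe that the layout ensures every counter position is touched by at most one addition and one subtraction, each with a digit operand in $[0, q-1]$; hence the net contribution $c_p$ at each position $p$ satisfies $|c_p| < q$. If a run of $\calM$ produces the identity, then the counter part gives $\sum_p c_p q^{-p} = 0$ and the cursor part is $0$ (achieved by the self-loops). Signed base-$q$ representations of $0$ with digits in $(-q, q)$ are unique (proved by inspecting the least nonzero place modulo $q$ and inducting), so every $c_p$ must equal $0$. This forces (i) the initial-state guess for each $A_i$ to equal $0$, (ii) every pair of consecutive state guesses to respect the transition function $\delta_i$, and (iii) the final guess to be an accepting state of $A_i$; thus the guessed word $w$ witnesses $\bigcap_i L(A_i) \neq \emptyset$. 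The converse is immediate: a shared witness $w$ allows $\calM$ to make correct guesses throughout and reach the identity.

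The main obstacle is engineering the block layout so that the invariant $|c_p| < q$ is preserved at every position; once this invariant holds, the rigidity of signed base-$q$ representations does the rest of the work in forcing all of $\calM$'s guesses to be correct.
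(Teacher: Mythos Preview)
Your reduction is correct. Like the paper, you reduce from DFA intersection nonemptiness and exploit that a finite signed base-$q$ expansion with all digits in $(-q,q)$ represents $0$ only if every digit is $0$; your ``at most one addition and one subtraction per position'' invariant is exactly what is needed for this, and it does hold with your block layout. The navigation between blocks involves only displacements bounded by $O(N)$, independent of the round index $j$, so $\calM$ genuinely has polynomial size.

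The route, however, is different from the paper's. The paper writes on $n$ interleaved \emph{tracks} (residue classes mod~$n$) the actual word accepted by each $\D'_i$, and then in a final phase subtracts a single number whose base-$q$ digits repeat each guessed letter $n$ times across the tracks; the zero test then forces the $n$ written words to coincide. You instead encode the \emph{state history} of every DFA in disjoint blocks and use the add/subtract pattern to locally check each transition---closer in spirit to a Cook--Levin tableau. What the paper's encoding buys is economy: the finite control of the paper's automaton need not range over $\bigcup_i Q_i$ at all (it only remembers a track index and a digit), whereas yours stores a guessed state $q_i$. What your encoding buys is that the ``one add, one subtract per cell'' invariant is immediate from the block structure and needs no alignment argument, and the correctness proof is a direct chain of forced equalities $q_i^{(j)}=\delta_i(q_i^{(j-1)},c_j)$. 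Both constructions are polynomial and both hinge on the same uniqueness lemma for signed-digit representations.
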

  \begin{proof}
    Let $q \ge 2$ be fixed.
    We give a reduction from the intersection nonemptiness problem
    for deterministic finite automata (DFA), a $\PSPACE$-hard problem~\cite{Kozen77}.
    Let $\D_1, \ldots, \D_n$, DFA over a finite alphabet $\Gamma$, $|\Gamma| \ge 2$,
    form an instance of that problem.
    We will describe an automaton \A over \mainbs that accepts the identity element
    of \mainbs if and only if there is a word $w \in \Gamma^*$ accepted by all $\D_i$.

    We first fix any injective mapping $f \colon \Gamma \to \{0, 1, \ldots, q-1\}^\ell$
    for $\ell = \lceil \log_2 |\Gamma| \rceil$.
    Transform $\D_1, \ldots, \D_n$ into nondeterministic
    finite automata (NFA) $\D'_1, \ldots, \D'_n$ over $\{0, 1, \ldots, q-1\}$ such that
    $\lang{\D'_i} = 1 \cdot f(\lang{\D_i}) \cdot 1$ for all~$i$.
    It is immediate that
    $\lang{\D_1} \cap \ldots \cap \lang{\D_n}$ is nonempty if and only if so is
    $\lang{\D'_1} \cap \ldots \cap \lang{\D'_n}$.

    We now describe the construction of the automaton~\A; it will be convenient
    for us to think of the input word as being written (\emph{produced}) rather
    than read by~\A.
    This word over $\{-1, 0, 1\} \times \{-1,1\} \subseteq \mainbs$
    corresponds to instructions
    to a machine working over an infinite tape with alphabet $\{0, 1, \ldots, q-1\}$,
    as per the intuition explained in \Cref{sec:prelim},
    and we will think of \A as moving left and right over that tape, updating
    the values in its cells.
    We emphasize that this tape is \emph{not} the input tape of \A,
    but instead corresponds to the actions of generators of \mainbs.

    The automaton \A will subdivide the tape into $n$~tracks.
    Suppose the cells of the tape are numbered, with indices $m \in \Z$;
    then the $i$th track consists of all cells with indices~$x$
    such that $x \equiv i \mod n$.
    The automaton \A will move left and right over the tape by
    producing $t = (0, 1)$ and $t^{-1} = (0, -1)$, two of the generators of \mainbs
    as monoid.
    Similarly,
    the current cell can be updated by producing $a = (1, 0)$ and $a^{-1} = (-1, 0)$,
    i.e., performing increments and decrements.
    The automaton will always remember in its finite-state memory which of the tracks
    the current cell belongs to.

    The workings of \A are as follows.
    It will enumerate $i = 1, \ldots, n$ one by one,
    and for each~$i$ it will guess and print some word accepted by the NFA~$\D'_i$
    on the $i$th track of the tape.
    (When we refer to guessing, this corresponds to the nondeterminism in the definition
    of automata over groups.)
    When incrementing~$i$, it will not only move to the $(i+1)$st track but also guess
    which specific cell in this track to move to. That is, in principle, \A may
    move arbitrarily far left or right over the tape.
    After all values of~$i$ have been enumerated,
    the automaton \A will guess some position of track~$1$ on the tape, moving
    to that position. Suppose the corresponding cell is numbered~$x \in \Z$,
    $x \equiv 1 \mod n$;
    then \A will transition to its \emph{final phase},
    performing the following sequence of operations:
    \begin{enumerate}
    \item\label{lb:removal}
      For $i = 1, \ldots, n$:
      perform decrement of the cell value once ($k = 1$ times), and then move to the adjacent cell
      with larger index (thus proceeding to track~$i+1$, or to track~$1$ again if $i=n$).
      \par
      We think of this \emph{sequence} of instructions as the \emph{removal} of $k$, $k = 1$.
    \item Perform the following operations in a loop, taken arbitrarily many times
      (terminating after some nondeterministically chosen iteration):
      \begin{itemize}
      \item Guess an element $g \in \{0, 1, \ldots, q-1\}$.
      \item \emph{Remove} $g$ (similarly to step~\ref{lb:removal}).
      \end{itemize}
    \item \emph{Remove} $1$ (as in step~\ref{lb:removal}).
    \item Move to an arbitrarily chosen cell of the tape and terminate
      (i.e., transition to a final state).
    \end{enumerate}
    We now claim that the final configuration of the tape can be all-$0$ (i.e.,
    the produced generators of \mainbs can yield the identity element of \mainbs)
    if and only if there is a word accepted by all machines $\D'_i$, $i = 1, \ldots, n$.

    Indeed, observe that, by the construction of \A, at the end of the simulation
    of NFA $\D'_1, \ldots, \D'_n$ each track~$i$ will contain a word
    of the form $1 \cdot f(w_i) \cdot 1$ where $w_i \in \lang{\D_i}$,
    with zeros all around it. The words written on different tracks may or may not be aligned
    with each other. Clearly, if all $w_i$ are chosen to be the same word, $w$, and
    the leftmost $1$s are all aligned with each other, then in the final phase
    of computation the automaton~\A can guess the word~$w$ and \emph{remove} it
    (or rather, remove $1 \cdot f(w) \cdot 1$) from
    the tape completely (with delimiters). After that, it can guess the location of
    cell~$0$ and move to that cell---this corresponds to the product of the produced
    generators being the identity of \mainbs.

    Therefore, it remains to see that the final phase cannot transform the tape
    configuration to all-$0$ unless all words $w_i$ are the same and the delimiting
    $1$s are aligned. But for this, it suffices to observe that the final phase
    (excepting the last operation)
    amounts, in terms of the group \mainbs, to subtracting a number of the following
    form (written in base~$q$):
    \begin{equation*}
      \underbrace{1 \ldots 1\vphantom{g_1}}_{n}\,
      \underbrace{g_1 \ldots g_1}_{n}
      \ldots
      \underbrace{g_s \ldots g_s}_{n}\,
      \underbrace{1 \ldots 1\vphantom{g_1}}_{n}
      \enspace,
    \end{equation*}
    where $s \in \N$ and $g_1, \ldots, g_s \in \{0, 1, \ldots, q-1\}$
    are chosen nondeterministically by \A.
    If the result of subtraction is $0 \in \Z$, then the content of the tape
    did indeed correspond to a number of this form. So the simulation of phase
    left each track with the same content, $\ldots 0 0 1 g_1 \ldots g_s 1 0 0 \ldots$\,,
    which means that $f^{-1}(g_1 \ldots g_s) \in \lang{\D_1} \cap \ldots \cap \lang{\D_n}$.

    Since the construction of the automaton \A can be performed in
    polynomial time (and even in logarithmic space), this completes the proof.
  \end{proof}
\fi

\iflongversion
  \subsection{PSPACE membership}
\fi
For the $\PSPACE$ upper bound,
we strengthen \cref{main-effective-regularity} by constructing a
polynomial-size representation of an exponential size automaton for
the resulting regular language.
%
A \emph{succinct finite
  automaton} is a tuple
$\calS=(n,\Gamma,(\varphi_x)_{x\in\Gamma\cup\{\varepsilon\}}, p_0,
p_f\})$, where $n\in\N$ is its \emph{bit length}, $\Gamma$ is its
\emph{input alphabet},
$\varphi_x(\varv_1,\ldots,\varv_n,\varv'_1,\ldots,\varv'_n)$ is a
formula from propositional logic with free variables
$\varv_1,\ldots,\varv_n,\varv'_1,\ldots,\varv'_n$ for each
$x\in\Gamma\cup\{\varepsilon\}$, $p_0\in\{0,1\}^n$ is its
\emph{initial state}, and $p_f\in\{0,1\}^n$ is its \emph{final state}.
The \emph{size} of $\calS$ is defined as
$|\calS|=n+\sum_{x\in\Gamma\cup\{\varepsilon\}} |\varphi_x|$, where
$|\varphi|$ denotes the length of the formula $\varphi$.

Moreover, $\calS$ \emph{represents} the automaton $\calA(\calS)$,
which is defined as follows. It has the state set $\{0,1\}^n$, initial
state $p_0$, and final state $p_f$. For states
$p=(b_1,\ldots,b_n), p'=(b'_1,\ldots,b'_n)\in\{0,1\}^n$ and
$x\in\Gamma\cup\{\varepsilon\}$, there is an edge $(p,x,q)$ in
$\calA(\calS)$ if and only if
$\varphi_x(b_1,\ldots,b_n,b'_1,\ldots,b'_n)$ holds. We define the
\emph{language accepted by $\calS$} as
$\lang{\calS}=\lang{\calA(\calS)}$.

We allow $\varepsilon$-edges in succinct automata, and with
Boolean formulas, one can encode steps in a Turing machine. Thus, a
succinct automaton of polynomial size can simulate a polynomial space
Turing machine with a one-way read-only input tape. Our descriptions
of succinct automata will therefore be in the style of polynomial
space algorithms. We show:
%
\begin{theorem}\label{effective-regularity-succinct}
  Given a rational subset $R\subseteq\BS(1,q)$, one can construct in
  polynomial space a polynomial-size
  succinct automaton accepting
  $\pe(R)$.
\end{theorem}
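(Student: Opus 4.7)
The plan is to trace through the proof of \cref{main-effective-regularity} and show that each component admits a polynomial-size succinct description, with all auxiliary computations performed in polynomial space. Recall that the combined automaton for \(\pe(R)\) has three ingredients: (i)~the thin-runs automaton \(\calB\) underlying \(\sv(\runs[k]{\calA})\) from \cref{lem:thinreg}, (ii)~for each state \(p\in Q\) the star-language automaton for \(\pe([\leftruns[k][p\to p]{\calA}]^*)\) from \cref{lem:star-left-runs}, and (iii)~the base-\(q\) componentwise addition gadget synchronised via state-view annotations. We treat each in turn.

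For (i), since \(k = |Q|+2|Q|^2\) is polynomial in \(|\calA|\), a state of \(\calB\) is a tuple \(((p_1,d_1),\ldots,(p_\ell,d_\ell))\in (Q\times\{L,R\})^{\leq k}\) which fits in \(O(k\log|Q|)\) bits. The four transition types in the proof of \cref{lem:thinreg} (single-step move on a component, insertion/removal at the boundaries, reversal insertion/removal in the middle) each modify only \(O(1)\) adjacent components, so the relations \(\varphi_x\) are polynomial-size propositional formulas over the bit encoding of the tuple. The signed-digit addition from \cref{lem:sum} and the projection onto the total production are implemented by constant-bit carry-tracking automata, which are trivially succinct.

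For (ii), we need to carry out the \(\gcd\) and Frobenius-bound computations from \cref{lem:star-left-runs} in polynomial space. Writing \(S_p = [\leftruns[k][p\to p]{\calA}]\), note that membership \(n\in S_p\) reduces to non-emptiness of a polynomial-size succinct automaton (namely the intersection of \(\calB\) with the singleton language \(\{\pe(n)\}\)), hence is in \(\PSPACE\). This lets us find, in polynomial space, some \(r\in S_p\) of magnitude at most doubly-exponential (hence polynomial bit length, since a \(k\)-thin run of polynomial description yields a value representable in polynomial bits), factor it, and for each prime power \(\ell^e \mid r\) determine the largest \(d\le e\) with \(S_p\subseteq \ell^d\N\). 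This produces \(\gcd(S_p)\) and the witness set \(T = \{r,n_1,\ldots,n_m\}\); set \(B_p:=(\max T)^2\ge F(S_p)\) by \cref{frobenius-finite-set}. We hard-code the polynomially many bits of \(\gcd(S_p)\) and \(B_p\) into the formulas of the succinct automaton for \(\pe(S_p^*)\), which then decides membership of an input number \(n\) as follows: if \(n>B_p\), check that \(\gcd(S_p)\mid n\) (a small finite-state divisibility check on the base-\(q\) input); if \(n\le B_p\), enter an \(\varepsilon\)-subroutine that nondeterministically picks elements \(s_1,s_2,\ldots\in S_p\) (each guess certified by running the polynomial-width succinct automaton for \(S_p\) on the fly) and maintains a running total on polynomially many bits, accepting when it equals \(n\).

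For (iii), the final composition is a succinct synchronised product: the first component simulates \(\calB\) and exposes at each position the set of states it visits; for each \(p\in Q\), the \(p\)-th component is the succinct automaton of (ii) for \(\pe(S_p^*)\), forced to have its radix point at a position of \(\calB\) visited by \(p\); an additional constant-width carry register converts the \(|Q|+1\) column sums (in signed digits) into a single base-\(q\) pointed expansion. The overall state of this product consists of one state from each component together with the carry, still of polynomial bit length, and the transition formulas are a polynomial-size conjunction of the component formulas and the addition relation. The main obstacle, and the place where the polynomial-space budget is essential rather than cosmetic, is the \(\gcd\) and Frobenius-bound computation of (ii): without it we have no way to chop \(S_p^*\) into a finite part and an arithmetic-progression part, and the naive ``guess a decomposition'' strategy alone would require the automaton to remember an unbounded number of summands instead of a single polynomial-bit running total bounded by \(B_p\).
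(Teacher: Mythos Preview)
Your high-level plan matches the paper's: parts~(i) and~(iii) are routine succinct-automata bookkeeping, and the real work is in~(ii), where one must replay the argument of \cref{lem:star-left-runs} in polynomial space. The paper delegates exactly this to \cref{pspace-gcd-frobenius}, and that is where your write-up has a genuine gap.

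You assert that one can ``find, in polynomial space, some $r\in S_p$ of magnitude at most doubly-exponential (hence polynomial bit length)'' and that the divisibility computation ``produces \dots\ the witness set $T=\{r,n_1,\ldots,n_m\}$'', after which $B_p:=(\max T)^2$ works. Two problems. First, ``doubly-exponential magnitude $\Rightarrow$ polynomial bit length'' is simply false; what you need is \emph{singly} exponential magnitude, and even that is not free: it requires a polynomial bound on the length of the \emph{shortest} run in $\leftruns[k][p\to p]{\calA}$ (hill-cutting, \cref{simple-hill-cutting} / \cref{thin-runs-shortest-run}) together with \cref{length-vs-magnitude}. Second, and more seriously, nothing in your argument bounds the witnesses $n_i\in S_p\setminus p_i^{d_i+1}\N$. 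Deciding \emph{whether} such a witness exists is PSPACE via the succinct-automaton emptiness test you describe, but \emph{producing} one in polynomial space requires knowing a priori that some $n_i$ with polynomially many digits exists. This is exactly the content of \cref{thin-runs-small-non-divisible}, whose proof hinges on the non-obvious combinatorial \cref{four-runs}: a long returning-left run can be shortened in two independent ways, and comparing the two shortenings forces a short run with the same non-divisibility property. Without this small-witness bound, your $B_p$ could be unbounded and the ``polynomial-bit running total bounded by $B_p$'' strategy for the finite part $X$ collapses. You have correctly identified this step as ``the main obstacle'', but you have not supplied the missing ingredient.
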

This allows us to decide rational subset membership in $\PSPACE$:
Given an automaton $\calA$ over $\BS(1,q)$ and an element $g$ as a
word over $\{a,a^{-1},t,t^{-1}\}$, we construct a succinct automaton
$\calB$ for $\pe(\lang{\calA})$ and the pointed expansion $\pe(g)$ in
logarithmic space. Since membership in succinct automata is well-known
to be in $\PSPACE$, we can check whether $\pe(g)\in\lang{\calB}$.


\subparagraph{Constructing succinct automata} It remains to prove
\cref{effective-regularity-succinct}.  The construction of a succinct
automaton for $\pe(R)$ proceeds with the same steps as in
\cref{rational-to-regular}. For most of these steps,
our constructions already yield small succinct automata
(e.g., one for $\pe([\leftruns[k][p \to p']{\calA}])$ in \cref{lem:thinreg}).
The exception is
\cref{lem:star-left-runs} --- in which case the key ingredient
is as follows.

\begin{proposition}\label{pspace-gcd-frobenius}
  Given an automaton $\calA$ over $\BS(1,q)$, a state $p$ of $\calA$, and $k\in\N$ in unary,
  one can compute in polynomial space the number
  $\gcd([\leftruns[k][p\to p]{\calA}])$ and a bound
  $B\ge F([\leftruns[k][p\to p]{\calA}])$. Both are at most
  exponential in $k$ and the size of $\calA$.
\end{proposition}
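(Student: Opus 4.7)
The approach is to adapt the proof of \Cref{lem:star-left-runs} to polynomial space using succinct automata. Let $S = [\leftruns[k][p\to p]{\calA}]$.

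The first ingredient is a polynomial-size succinct automaton $\calS$ accepting $\pe(S)$. Inspecting the proof of \Cref{lem:thinreg}, the simulating multi-thread automaton has state space $(Q \times \{L, R\})^{\leq k}$, encodable in $O(k \log |Q|)$ bits, and the signed-digit addition from \Cref{lem:sum} contributes only polynomially many bits of carry-like state. Intersecting $\calS$ with other polynomial-size succinct DFAs---in particular, recognizers of divisibility predicates $\{x \in \Z : d \mid x\}$ for integers $d$ of polynomial bit length, which track the residue modulo $d$ using $O(\log d)$ bits of state---yields again polynomial-size succinct automata, for which emptiness is decidable in $\PSPACE$.

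The algorithm then mirrors \Cref{lem:star-left-runs}. First, determine the sign structure of $S$ (subset of $\N$, of $-\N$, or containing both signs) by $\PSPACE$ emptiness tests on $\calS$ restricted to positive or negative productions. In the case $S \subseteq \N$ with $S \neq \{0\}$, find some $r \in S$ of polynomial bit length by nondeterministic simulation of $\calS$, relying on the structural bound below. Factor $r$ into prime powers $p_1^{e_1} \cdots p_m^{e_m}$, which is feasible in polynomial space since $r$ has polynomial bit length. For each $i$, compute $d_i \in [0, e_i]$ as the largest integer with $S \subseteq p_i^{d_i} \Z$ by testing emptiness of $\lang{\calS}$ intersected with the succinct recognizer of non-multiples of $p_i^{d_i}$; then find $n_i \in S \setminus p_i^{d_i + 1} \Z$ by nondeterministic search in the corresponding nonempty intersection, subject again to the structural bound. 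Output $\gcd(S) = \prod p_i^{d_i}$ and $B = (\max(r, n_1, \ldots, n_m))^2 \geq F(S)$, using \Cref{frobenius-finite-set} applied to the finite subset $T = \{r, n_1, \ldots, n_m\} \subseteq S$, which satisfies $\gcd(T) = \gcd(S)$. The other sign cases are handled symmetrically.

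The main obstacle is the structural bound: if $S \neq \{0\}$, then $S$ contains a nonzero element of absolute value at most $2^{\mathrm{poly}(k,\,|\calA|,\,\log q)}$, and the same bound holds for the restricted sets $S \setminus p_i^{d_i+1} \Z$ whenever they are nonempty. The plan is to prove it by a crossing-sequence argument on $\leftruns[k][p \to p]{\calA}$: any run $\rho$ with $\pmax(\rho) > (2|Q|)^k$ must have two positions sharing the same crossing sequence, and the slab between them can be removed or duplicated while preserving the returning-left $k$-thin structure. The productions obtained by iterating these modifications all lie in $S$, and their pairwise differences---bounded by $q^{(2|Q|)^k} = 2^{\mathrm{poly}}$ from the slab's local contribution---are multiples of $\gcd(S)$; combined with a recursive argument to handle the case where slab removal yields a zero production, this bounds both $\gcd(S)$ and the smallest nonzero element of $S$. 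The argument is delicate because the production changes nontrivially under slab modifications (positions to the right of a removed slab shift, introducing extra $q$-factors), so one must carefully combine several pumped runs to extract the desired small element. For the restricted sets, the same argument is applied to the product of $\calS$ with the divisibility automaton, which remains within the class of polynomial-size succinct automata handled above.
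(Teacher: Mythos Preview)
Your high-level plan mirrors the paper's exactly: build a succinct automaton for $\pe(S)$, find one element $r\in S$, factor it, determine each $d_i$ via $\PSPACE$ emptiness tests, find witnesses $n_i\in S\setminus p_i^{d_i+1}\Z$, and output $\gcd(S)=\prod p_i^{d_i}$ and $B=(\max\{r,n_1,\ldots,n_m\})^2$ using \cref{frobenius-finite-set}. The only nontrivial step is the ``structural bound'' guaranteeing that $r$ and each $n_i$ can be chosen with polynomially many bits. This is also the only place where your argument diverges from the paper---and it does not go through as written.

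The quantitative problem is that your crossing-sequence bound is too weak. For a $k$-thin returning-left run, the crossing sequence at a cut is a list of at most $2k$ states, so there are roughly $|Q|^{2k}$ possible sequences. You therefore only force a repeated crossing sequence once $\pmax(\rho)\gtrsim |Q|^{2k}$, giving witnesses of magnitude about $q^{|Q|^{2k}}$. Your equation ``$q^{(2|Q|)^k}=2^{\mathrm{poly}}$'' is simply false: $(2|Q|)^k$ is exponential in the input size (recall $k$ is in unary and is later instantiated as $|Q|+2|Q|^2$), so these witnesses have exponentially many bits and cannot be guessed or manipulated in polynomial space. The paper avoids this by \emph{not} using crossing sequences: \cref{four-runs} relies on one-counter hill-cutting in the position coordinate (\cref{simple-hill-cutting}), which yields the polynomial threshold $f(n,k)=3k(n^2+1)$ and hence witnesses of magnitude $\le q^{O(k n^2)}$.

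There is also a structural gap. Even granting a length bound, pumping a \emph{single} slab does not suffice to prove the analogue of \cref{thin-runs-small-non-divisible}. Removing a slab between cuts $c_1<c_2$ turns the production $A+B+C$ into $A+Cq^{-(c_2-c_1)}$; this is an affine relation $[\rho]=q^h[\rho']+D$, but you get only one such equation from one slab, and $D$ is not itself a production. To run the modular contradiction (all shorter runs $\equiv 0\bmod m$ forces $D\equiv 0$, hence $[\rho]\equiv 0$) you need \emph{two independent} shortenings producing the \emph{same} affine shift $d$---precisely the content of \cref{four-runs}. One can in fact recover that structure from crossing sequences by removing two disjoint slabs, but your proposal does not say this; the phrases ``carefully combine several pumped runs'' and ``recursive argument'' leave the actual mechanism unspecified. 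And the suggestion to take the product with a divisibility automaton does not help here: that product lives at the level of the succinct automaton on $\pe(S)$, not at the level of runs of~$\calA$, so crossing-sequence pumping on~$\calA$ gains nothing from it.
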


\noindent
Our bound on $F$ extends
the bound for automatic sets in $\N$~\cite[Lemma~4.5]{BellHS18}
to thin two-way computations.
Before proving \cref{pspace-gcd-frobenius}, let us show how it
implies \cref{effective-regularity-succinct}.
\begin{proof}[Proof of \cref{effective-regularity-succinct}]
  The constructions in \cref{lem:thinreg} and
  \cref{main-effective-regularity}, immediately yield a
  polynomial-size succinct automaton for $\pe(R)$ once a succinct
  automaton for each $\pe([\leftruns[k][p\to p]{\calA}]^*)$ is
  found. For the latter, we proceed as in \cref{lem:star-left-runs}.
  Let $S=[\leftruns[k][p\to p]{\calA}]$ and compute $\gcd(S)$ and a
  bound $B\ge F(S)$ using \cref{pspace-gcd-frobenius}. Then, by
  \cref{star-decomposition} on page~\pageref{star-decomposition}, it
  suffices to construct a succinct automaton for $\pe(X)$ and one for
  $\pe(Y)$. For $\pe(X)$, we use the fact that we can construct a
  succinct automaton $\calB$ for $\pe(S)$. Our automaton for $\pe(X)$
  proceeds as follows. With $\varepsilon$-transitions, it runs $\calB$
  to successively guess numbers $\le B$ from $S$ and stores each of
  them temporarily in its state. Such a number requires $O(\log(B))$
  bits. In another $O(\log(B))$ bits, it stores the sum of the numbers
  guessed so far. This continues as long as the sum is at most
  $B$. Then, our automaton reads the resulting sum from the
  input. This automaton clearly accepts $\pe(X)$.

  For $\pe(Y)$, we have to construct a succinct automaton that
  accepts any number $> B$ that is divisible by $\gcd(S)$. Since
  $\gcd(S)$ is available as a number with polynomially many digits, we
  can construct a succinct automaton accepting $\pe(\gcd(S)\cdot\N)$:
  It keeps the remainder modulo $\gcd(S)$ of the currently read
  prefix.  This requires $O(\log(\gcd(S))$ many bits.  Since $B$ also
  has polynomially many digits, we can construct a succinct
  automaton for $\{n\in\N \mid n>B\}$. An automaton for the
  intersection then accepts $\pe(Y)$.
\end{proof}

It is easy to see that the number produced by a returning-left run
is at most exponential in the length of the run. The exact bound will
not be important.
\begin{restatable}{lemma}{lengthVsMagnitude}\label{length-vs-magnitude}
  If $\rho$ is a run in $\leftruns[k]{\calA}$ of length $\ell$, then
  $|[\rho]|\le q^{2\ell}$.
\end{restatable}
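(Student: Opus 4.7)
The plan is to bound the absolute value of the produced integer by tracking two quantities along the run: the positions visited, and the effect of each individual generator.

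First, I would use the returning-left assumption to bound the positions from below and the length to bound them from above. Since $\pmin(\rho) = 0$, every prefix of $\rho$ has a nonnegative final position, so every intermediate position lies in $[0, \pmax(\rho)]$. Each edge changes the position by at most one, hence $\pmax(\rho) \le \ell$. Therefore, whenever an edge labelled $a^{\pm 1}$ is traversed at a position $m$, we have $0 \le m \le \ell$, so this edge contributes an additive term $\pm q^m$ with $|q^m| \le q^\ell$ to $[\rho] \in \Z$.

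Second, I would bound the number of contributions. The edges labelled $t^{\pm 1}$ do not change the integer component, and there are at most $\ell$ edges in $\rho$ altogether, so at most $\ell$ of them are of the form $a^{\pm 1}$. Summing up and using the triangle inequality,
\[
|[\rho]| \;\le\; \sum_{i\colon \rho_i \in \{a, a^{-1}\}} q^{\pos{\rho_1 \cdots \rho_{i-1}}} \;\le\; \ell \cdot q^{\ell}.
\]
Finally, for $q \ge 2$ an easy induction gives $\ell \le q^{\ell}$ for all $\ell \ge 0$, so $\ell \cdot q^{\ell} \le q^{\ell} \cdot q^{\ell} = q^{2\ell}$, giving the desired bound.

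There is no real obstacle here; the thinness parameter $k$ and the state structure of $\calA$ play no role, and the lemma is essentially a length-vs-magnitude estimate using only the returning-left property. The only mild subtlety is the conversion from $\ell \cdot q^\ell$ to $q^{2\ell}$, which is handled by the elementary inequality above.
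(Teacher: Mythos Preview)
Your proof is correct. Both your argument and the paper's follow the same outline---bound the positions visited, then sum the contributions of the $a^{\pm 1}$ edges---but the bookkeeping differs slightly. The paper uses the returning property to sharpen $\pmax(\rho)\le \ell$ to $\pmax(\rho)\le \ell/2$, and observes that at most $\ell-2\pmax(\rho)$ edges (rather than $\ell$) can be $a^{\pm 1}$, arriving at the intermediate bound $(\ell-2m)q^m$ with $m=\pmax(\rho)$; it then crudely bounds this by $q^{\ell}\cdot q^{\ell/2}\le q^{2\ell}$. Your route is more direct: $\pmax(\rho)\le\ell$, at most $\ell$ additive terms of size $\le q^{\ell}$, and $\ell\le q^{\ell}$. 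Both land at $q^{2\ell}$; the paper's intermediate estimate is tighter in principle, while your version is shorter and makes clearer that neither $k$ nor the state structure matters.
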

\iflongversion
  \begin{proof}
    Let $m=\pmax(\rho)$.  Since $\rho$ is returning-left, $m$ can be at
    most $\ell/2$. Suppose in each position $i\in[0,m]$, $\rho$ adds
    $x_i\cdot q^{i}$. Then we have
    $|x_0|+\cdots+|x_m|\le \ell-2m$ and also
    \[ |[\rho]|=|x_0q^0+\cdots x_mq^m|\le |x_0|q^0+\cdots+|x_m|q^m. \]
    Under the condition $|x_0|+\cdots+|x_m|\le \ell-2m$, the expression
    on the right is clearly maximized for $x_m=\ell-2m$ and $x_i=0$ for
    $i\in[0,m-1]$. Therefore, we have $|[\rho]| \le (\ell-2m)
    q^m$. Since $\ell-2m\le q^\ell$, this implies
    $|[\rho]|\le (\ell-2m)q^m\le q^\ell\cdot q^{\ell/2}\le q^{2\ell}$.
  \end{proof}
\fi



The main ingredient for \cref{pspace-gcd-frobenius} will be
\cref{four-runs}.  We write $\rho\shorter \rho'$ if
$|\rho|<|\rho'|$. Moreover, for $d\in\Z$, we write
$\rho\shorter[d] \rho'$ if $\rho\shorter\rho'$ and for some
$\ell\in\Z$, we have $[\rho']=\ell\cdot[\rho]+d$.%
\begin{restatable}{lemma}{fourRuns}\label{four-runs}
  There is a polynomial $f$ such that the following holds.  Let $\calA$
  be an $n$-state automaton over $\BS(1,q)$ and let $p, p'$ be two states of $\calA$. Let
  $\rho_{11}\in\leftruns[k][p \to p']{\calA}$ with $|\rho_{11}|>f(n,k)$. There exist runs
  $\rho_{00}, \rho_{10}, \rho_{01}\in\leftruns[k][p \to p']{\calA}$ and
  $d\in\Z$ so that:
  \begin{equation} \begin{matrix}
      \rho_{01} & \shorter[d] & \rho_{11} \\
      \vshorter &        & \vshorter \\
      \rho_{00} & \shorter[d] & \rho_{10}
    \end{matrix}
    \label{four-runs-relations}
  \end{equation}
\end{restatable}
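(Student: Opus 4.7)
The plan is to exhibit two disjoint cycles inside $\rho_{11}$ at the level of (state, cursor-position) pairs, and to use them to carve out the remaining three runs via independent excisions.

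View $\rho_{11}$ as a walk in the graph whose vertices are pairs $(q, m) \in Q \times \N$ and whose edges correspond to single-step transitions of $\calA$. Since $\rho_{11}$ is $k$-thin, each integer position $m \in [0, \pmax(\rho_{11})]$ is visited at most $k$ times, so $|\rho_{11}| + 1 \le k \cdot (\pmax(\rho_{11}) + 1)$. On the other hand, the walk visits at most $n \cdot (\pmax(\rho_{11}) + 1)$ distinct (state, cursor) vertices. When $|\rho_{11}| > f(n, k)$ for a suitable polynomial $f$, a pigeonhole argument produces two disjoint time intervals $[t_1, t_2]$ and $[t_3, t_4]$ (with $t_2 \le t_3$) such that the walk visits the same vertex at both endpoints of each interval. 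The corresponding sub-runs $\gamma_1$ at vertex $(q_1, m_1)$ and $\gamma_2$ at vertex $(q_2, m_2)$ are then non-empty cycles.

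Define $\rho_{01}$ by excising $\gamma_1$ from $\rho_{11}$, $\rho_{10}$ by excising $\gamma_2$, and $\rho_{00}$ by excising both. Because each $\gamma_i$ begins and ends at the same (state, cursor), the excisions yield valid runs, preserve $k$-thinness (thickness can only decrease), and keep the runs returning-left from $p$ to $p'$. Disjointness of the two time intervals makes the excisions commute, giving the strict length inequalities $|\rho_{00}| < |\rho_{01}|, |\rho_{10}| < |\rho_{11}|$ prescribed by the parallelogram.

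For the productions, I use the $\BS(1, q)$ multiplication $(r, m)(r', m') = (r + q^m r', m + m')$. The standalone production of $\gamma_1$, computed as an isolated run starting at cursor $0$, has the form $(c_1, 0)$ with $c_1 \in \Z[\tfrac{1}{q}]$; in the context of $\rho_{11}$ (and similarly of $\rho_{10}$), $\gamma_1$ is preceded by a prefix whose cursor component equals $m_1$, so its insertion adds exactly $q^{m_1} c_1$ to the integer-part component of the running product. A direct computation with the multiplication law yields
\[ [\rho_{11}] - [\rho_{01}] = q^{m_1} c_1 = [\rho_{10}] - [\rho_{00}], \]
and this common value lies in $\Z$ since all four productions are integers (the runs are returning-left). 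Taking $d := q^{m_1} c_1$ and $\ell := 1$ in the definition of $\shorter[d]$ yields $\rho_{01} \shorter[d] \rho_{11}$ and $\rho_{00} \shorter[d] \rho_{10}$, as required.

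The main obstacle is the pigeonhole argument itself: one needs two repetitions of (state, cursor) vertices whose time intervals are \emph{disjoint}, not merely any two repetitions (which could be nested or share an endpoint). A straightforward visits-vs-vertices count delivers a bound $f(n, k) = O(n k)$ when $k$ is large enough compared to $n$ (e.g., $k \ge 2n + 1$, as holds in our intended application where \cref{cor:decomp} supplies $k = n + 2n^2$); the general case requires a more refined pigeonhole, for instance by counting distinct crossing sequences and selecting disjoint repetitions among them, but the structural content of the lemma---independent excisions giving the clean parallelogram relation---remains the same.
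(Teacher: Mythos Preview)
Your excision idea is clean, but the pigeonhole step has a genuine gap that cannot be patched by ``a more refined pigeonhole.'' Take a two-state automaton with states $u,d$ and edges $(u,t,u)$, $(u,a,d)$, $(d,t^{-1},d)$. The run $\rho=t^{M}\,a\,(t^{-1})^{M}$ lies in $\leftruns[2][u\to d]{\calA}$, has length $2M+1$, and visits every $(\text{state},\text{position})$ vertex \emph{at most once}: on the way up it is always in $u$, on the way down always in $d$. There is no cycle to excise, for any $M$. Since the run is $2$-thin it is also $k$-thin for every $k\ge 2$, so your claim fails even in the regime $k\ge 2n+1$ you single out; the issue is that $k$-thin means \emph{at most} $k$ visits per position, not at least, so you cannot force a state repetition at any fixed position.

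This obstruction is exactly what drives the shape of the paper's proof. Its shortening operation is not cycle excision but \emph{hill-cutting} (\cref{simple-hill-cutting}): one finds a state repetition on an ascending segment and a matching one on a descending segment, and removes both. On the example above this replaces $t^{M'}a(t^{-1})^{M'}$ by $t^{M'-h}a(t^{-1})^{M'-h}$, which alters the production by a \emph{multiplicative} factor $q^{h}$ rather than an additive constant. That is precisely why $\shorter[d]$ is defined with a multiplier $\ell$ (so that $[\rho_{11}]=\ell\cdot[\rho_{01}]+d$), and why your identity $[\rho_{11}]-[\rho_{01}]=[\rho_{10}]-[\rho_{00}]$ (implicitly $\ell=1$) is too strong to hold in general. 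Your closing remark about crossing sequences is the right instinct---it is essentially hill-cutting---but carrying it through forces the $q^{h}$ factor into the horizontal relation and requires the two independent cuts to live in different ``altitude bands'' of the run, which is the substance of the paper's two-case analysis.
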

\noindent Here, one shows that a long run can be shortened
independently in two ways: Going left in the diagram
\labelcref{four-runs-relations}, and going down.  Shortening
the run by ``going left'' changes the production of the run by the same
difference, up to a factor $\ell$ that may differ in the
two rows.

\iflongversion
  In order to prove \cref{four-runs}, we first show a version of
  \cref{four-runs} that applies to returning runs that go far to the left
  and far to the right.  We first show some auxiliary lemmas:

  \begin{lemma}\label{simple-hill-cutting}
    Let $\calA$ be an $n$-state automaton over $\BS(1,q)$. For every
    $\tau\in\retruns[k]{\calA}$ with $\pmax(\tau)>n^2$ or
    $\pmin(\tau)<-n^2$, there is a run $\tau'\in\retruns[k]{\calA}$ with
    $|\tau'|<|\tau|$ and $\pmin(\tau')\ge \pmin(\tau)$.
    Moreover, $\tau'$ begins and ends in the same states as $\tau$.
  \end{lemma}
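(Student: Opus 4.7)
My approach is a pigeonhole-and-cut argument: I identify an ``outer loop'' in \(\tau\) whose excision preserves returning-ness, thickness, \(\pmin\), and the endpoint states. By symmetry it suffices to treat \(\pmax(\tau) > n^2\). For each \(j \in [1, \pmax(\tau)]\), let \(c_j\) (resp.\ \(d_j\)) be the first (resp.\ last) step at which \(\tau\) visits position \(j\); these are well defined because \(\tau\) is returning and visits every intermediate height, and they satisfy \(c_1 < c_2 < \cdots \leq d_{\pmax(\tau)} < \cdots < d_1\). Let \(\alpha_j, \beta_j \in Q\) denote the states of \(\calA\) at steps \(c_j\) and \(d_j\). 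Since \(|Q \times Q| = n^2 < \pmax(\tau)\), the pigeonhole principle gives heights \(j_1 < j_2\) with \(\alpha_{j_1} = \alpha_{j_2}\) and \(\beta_{j_1} = \beta_{j_2}\).

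I then form \(\tau'\) by concatenating three blocks of edges from \(\tau\): steps \(1, \ldots, c_{j_1}\); then steps \(c_{j_2}+1, \ldots, d_{j_2}\); then steps \(d_{j_1}+1, \ldots, |\tau|\). The pigeonhole makes both splices state-consistent. The middle block has net position change \(0\) in \(\tau\) (from \(j_2\) back to \(j_2\)), so when played from position \(j_1\) in \(\tau'\) it also ends at \(j_1\); the final block then applies from state \(\beta_{j_1}\) at position \(j_1\), exactly as it did in \(\tau\), and brings \(\tau'\) back to position \(0\). Hence \(\tau'\) is returning, agrees with \(\tau\) on the first and last states, and is strictly shorter because the dropped sub-segments \([c_{j_1}+1, c_{j_2}]\) and \([d_{j_2}+1, d_{j_1}]\) are nonempty.

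The delicate part is maintaining \(\pmin(\tau') \geq \pmin(\tau)\) and the \(k\)-thin property. The middle block in \(\tau'\) is effectively replayed with all its positions shifted down by \(j_2 - j_1\): if the middle in \(\tau\) touched a step achieving \(\pmin(\tau)\) or revisited positions already seen in the prefix and suffix, then after the shift \(\tau'\) could descend below \(\pmin(\tau)\) or exceed thickness \(k\). I plan to handle this by refining the pigeonhole to those heights \(j\) whose interval \([c_j, d_j]\) avoids every step attaining \(\pmin(\tau)\); these ``high'' heights form an upper suffix of \([1, \pmax(\tau)]\), and the bound \(n^2\) (rather than just \(n\)) provides enough slack to still extract a matching pair for which the resulting shift is absorbed by the safe vertical margin. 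For such a pair, the shifted middle lives above \(\pmin(\tau)\) and at positions disjoint from the low-height visits of the prefix and suffix, so both \(\pmin\) and \(k\)-thinness are preserved. This localization is the main obstacle and the step where I expect the proof to require the most bookkeeping.
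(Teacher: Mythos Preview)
Your overall strategy---pigeonhole on state pairs at a ladder of heights, then splice out the segments between a matching pair---is exactly the hill-cutting that the paper invokes. The gap is in your handling of the $\pmin$ constraint. Your refinement restricts the pigeonhole to ``safe'' heights $j$ for which $[c_j,d_j]$ avoids every step at $\pmin(\tau)$, and you claim the $n^2$ bound leaves enough such heights. It need not leave \emph{any}. Take $\tau$ with position profile $0\to M\to \pmin(\tau)\to M\to 0$ (with $M>n^2$ and $\pmin(\tau)<0$; this is realisable already over a one- or two-state automaton with $t,t^{-1}$ self-loops). For every $j\in[1,M]$ the first visit $c_j$ lies on the initial ascent and the last visit $d_j$ on the final descent, so $[c_j,d_j]$ contains the whole dip to $\pmin(\tau)$. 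There are zero safe heights and your pigeonhole has nothing to act on. The claim that the safe heights ``form an upper suffix with enough slack'' is therefore false.

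The repair is not to filter heights but to change the definition of $c_j,d_j$: fix any step $t^*$ at position $\pmax(\tau)$ and let $c_j$ be the \emph{last} visit to $j$ before $t^*$ and $d_j$ the \emph{first} visit to $j$ after $t^*$. Then $[c_j,d_j]$ has position $\ge j$ for every $j$ automatically, so after the downward shift the middle block stays at height $\ge j_1\ge 1>0\ge \pmin(\tau)$ with no further bookkeeping. This is the version the paper is pointing to when it cites the standard one-counter hill-cutting. (Also, your symmetry remark is slightly off: the two cases are not symmetric, because the conclusion singles out $\pmin$. In case $\pmin(\tau)<-n^2$ one cuts a valley from below, which can only raise $\pmin$, so that case is the easier one.)

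Your worry about $k$-thinness is legitimate and is \emph{not} resolved by your disjointness claim either: even when $[c_{j_2},d_{j_2}]$ avoids $\pmin(\tau)$, the shifted middle can overlap positions that the prefix or suffix already visit heavily. The paper's sketch does not spell this out explicitly---it simply defers to the one-counter literature---so if you want a fully self-contained argument you will still need to control that overlap.
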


  \begin{proof}
    If we consider the effect of the actions of $\calA$ on the cursor,
    then the statement amounts to the following statement on $n$-state
    one-counter automata with a $\Z$-counter and a single zero test
    at the end of (accepted) runs:
    if along a run $\tau$ the counter goes (i) above $n^2$ or (ii) below $-n^2$, then
    there is a strictly shorter run $\tau'$ which begins and ends in the same states
    and in which the minimum value of the counter is at least the minimum value
    of the counter in $\tau$.
    This can be proved using the standard hill-cutting argument
    (see, e.g.,~\cite[Lemma~5]{EtessamiWY10};
    cf.~\cite[Proposition~7]{Latteux83} as well as~\cite{ChistikovCHPW19} and references therein):
    in scenario~(i) one can apply it
    to reduce the \emph{maximum} value of the counter whilst retaining
    the minimum value; and in scenario~(ii) one can increase
    the \emph{minimum} value whilst retaining the maximum one.
  \end{proof}

  The following is a consequence of \cref{simple-hill-cutting}.
  \begin{lemma}\label{thin-runs-shortest-run}
    There is a polynomial $f$ so that for every $n$-state automaton
    $\calA$ over $\BS(1,q)$ and every two states $p,p'$ of $\calA$,
    the shortest run in $\leftruns[k][p \to p']{\calA}$
    has length $\le f(n,k)$.
  \end{lemma}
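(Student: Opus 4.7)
The plan is to take a shortest run $\tau \in \leftruns[k][p\to p']{\calA}$ (if this set is empty the statement is vacuous) and bound $\pmax(\tau)$ directly via \cref{simple-hill-cutting}, then use $k$-thinness to turn that bound into a length bound.

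First, I note that because $\tau$ is returning-left, $\pmin(\tau) = 0$, so the alternative $\pmin(\tau) < -n^2$ in \cref{simple-hill-cutting} is impossible. Thus the only way to apply that lemma to $\tau$ is if $\pmax(\tau) > n^2$. Suppose for contradiction this holds. Then \cref{simple-hill-cutting} produces a run $\tau' \in \retruns[k]{\calA}$ with $|\tau'| < |\tau|$, with the same starting and ending states as $\tau$, and with $\pmin(\tau') \ge \pmin(\tau) = 0$. Since $\tau'$ is returning we have $\pos{\tau'} = 0$, hence $\pmin(\tau') \le 0$, which combined with the previous inequality forces $\pmin(\tau') = 0$. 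Therefore $\tau' \in \leftruns[k][p\to p']{\calA}$, contradicting the minimality of $\tau$.

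It follows that $\pmax(\tau) \le n^2$, so every position seen along $\tau$ belongs to $\{0, 1, \ldots, n^2\}$. By $k$-thinness, each of these $n^2 + 1$ values occurs at most $k$ times among the final positions of prefixes of $\tau$; since there are $|\tau| + 1$ such prefixes, we get $|\tau| + 1 \le k(n^2 + 1)$. Taking $f(n, k) = k(n^2 + 1)$ then yields the claim.

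The only real subtlety — and it is already absorbed into the statement of \cref{simple-hill-cutting} — is that the hill-cutting shortening must preserve simultaneously the $k$-thinness, the start and end states, and the guarantee $\pmin(\tau') \ge \pmin(\tau)$. The last of these is exactly what is needed for the shortened run to remain returning-left, which is what drives the contradiction and therefore the whole argument.
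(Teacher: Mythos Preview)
Your argument is correct and is precisely the intended one: the paper states this lemma as ``a consequence of \cref{simple-hill-cutting}'' without further detail, and you have filled in exactly that consequence, including the observation that $\pmin(\tau')\ge\pmin(\tau)=0$ forces the shortened run to stay returning-left. Your final counting step is the content of \cref{length-vs-max}, so the bound $f(n,k)=k(n^2+1)$ matches what the paper uses elsewhere.
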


  \begin{lemma}\label{length-vs-max}
    If $\rho$ is a run in $\leftruns[k]{\calA}$, then
    $|\rho|/k<\pmax(\rho)+1$.
  \end{lemma}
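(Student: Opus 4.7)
The plan is to prove this by a simple counting argument based on the definition of thickness and the constraint that $\rho$ is returning-left. First I would unpack the assumptions: since $\rho \in \leftruns[k]{\calA}$, we have $\pmin(\rho) = 0$, hence every prefix of $\rho$ has position in $\{0, 1, \ldots, \pmax(\rho)\}$, and the thickness condition says that each fixed value $n \in \Z$ is attained as $\pos{\rho_1 \cdots \rho_i}$ for at most $k$ indices $i$.

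Next I would observe that consecutive prefixes of $\rho$ differ in position by at most $1$: the only generators are $a^{\pm 1}$ (which leave the position unchanged) and $t^{\pm 1}$ (which change it by $\pm 1$). Starting from position $0$ and reaching $\pmax(\rho)$ at some intermediate prefix, a discrete intermediate-value argument forces the set of positions attained by prefixes to be exactly $\{0, 1, \ldots, \pmax(\rho)\}$, which has cardinality $\pmax(\rho) + 1$.

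Finally I would conclude by double counting. The $|\rho| + 1$ prefixes of $\rho$ (including the empty one, which sits at position $0$) distribute among $\pmax(\rho) + 1$ possible positions, each position accommodating at most $k$ prefixes. Therefore
\[
|\rho| + 1 \;\le\; k \cdot (\pmax(\rho) + 1),
\]
and dividing by $k$ gives $|\rho|/k < \pmax(\rho) + 1$, as required. There is essentially no obstacle here; the only care needed is to include the empty prefix in the count in order to obtain the strict inequality claimed in the statement.
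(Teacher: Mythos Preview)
Your proof is correct and essentially the same as the paper's: both count the $|\rho|+1$ prefixes against the at most $\pmax(\rho)+1$ possible positions, each occurring at most $k$ times, to get $|\rho|+1 \le k(\pmax(\rho)+1)$. Your intermediate-value remark that the positions actually fill the whole interval $\{0,\ldots,\pmax(\rho)\}$ is true but unnecessary---containment in this set is all that is needed.
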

  \begin{proof}
    The run $\rho$ can visit at most $\pmax(\rho)+1$ distinct
    positions. But since $\rho$ is $k$-thin, it can visit each position
    at most $k$ times.  Since $\rho$ has $|\rho|$ moves, we have
    $|\rho|+1\le k(\pmax(\rho)+1)$ and thus $|\rho|<k(\pmax(\rho)+1)$,
    hence $|\rho|/k<\pmax(\rho)+1$.
  \end{proof}

  We now turn to our simpler version of \cref{four-runs}.   For runs $\rho,\rho'$ and $d\in\Z$, we write
  $\rho\shorterdiff[d]\rho'$ if $|\rho|<|\rho'|$ and $[\rho']=[\rho]+d$.
  \begin{lemma}\label{four-runs-simple}
    Let $\calA$ be an $n$-state automaton over $\BS(1,q)$ and let
    $p, p'$ be two states of $\calA$.
    Suppose $\rho_{11}\in\retruns[k][p \to p']{\calA}$ is such that $\pmax(\rho_{11})> n^2$
    and $\pmin(\rho_{11})<-n^2$. Then there are runs
    $\rho_{00},\rho_{10},\rho_{01},\rho_{11}\in\retruns[k][p \to p']{\calA}$ and a
    number $d\in\Z$ so that $\pmin(\rho)\ge \pmin(\rho_{11})$ for
    $\rho\in\{\rho_{00},\rho_{01},\rho_{10}\}$ and the following holds:
    \begin{equation}
      \begin{matrix}
        \rho_{01} & \shorterdiff[d] & \rho_{11} \\
        \vshorterdiff &        & \vshorterdiff \\
        \rho_{00} & \shorterdiff[d] & \rho_{10}
      \end{matrix}
      \label{four-runs-simple-relations}
    \end{equation}
  \end{lemma}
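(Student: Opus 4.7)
The approach is to isolate two sub-cycles $c_1$, $c_2$ of $\rho_{11}$ on opposite sides of the origin and then obtain the three missing runs by deleting one or both of them.

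Since $\pmax(\rho_{11}) > n^2$, the same kind of pigeonhole argument that underlies \cref{simple-hill-cutting}, sharpened so that the extracted cycle sits above its anchor, yields a sub-cycle $c_1$ of $\rho_{11}$ that starts and ends in some state $q_1$ at a common position $m_1 > 0$ and visits only positions $\ge m_1$. Viewed as a standalone run, $c_1 \in \leftruns[k][q_1 \to q_1]{\calA}$, so $[c_1] \in \Z$. Symmetrically, from $\pmin(\rho_{11}) < -n^2$ we extract a sub-cycle $c_2$ at a position $m_2 < 0$ visiting only positions $\le m_2$. Because $c_1$ lives strictly above the origin and $c_2$ strictly below, the two sub-runs occupy disjoint (non-nested) time intervals within $\rho_{11}$.

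Now set $\rho_{01}$, $\rho_{10}$, $\rho_{00}$ to be the runs obtained from $\rho_{11}$ by deleting, respectively, $c_1$, $c_2$, and both. Each deletion removes a cycle, so the endpoints $p$ and $p'$ and the returning property are preserved; the thickness does not increase, so the new runs are $k$-thin; and the set of visited positions shrinks, so $\pmin(\rho) \ge \pmin(\rho_{11})$ for each new run. The four strict length inequalities demanded by the diagram are immediate. By the semidirect-product rule $(r, m)(r', 0) = (r + q^m r', m)$, inserting $c_1$ at a prefix ending at cursor position $m_1$ contributes exactly $q^{m_1}[c_1]$ to the first coordinate of the production, and this contribution is unaffected by whether $c_2$ is present, because $c_2$, being returning, does not alter the cursor position at $c_1$'s anchor. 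Therefore
\[ [\rho_{11}] - [\rho_{01}] \;=\; q^{m_1}[c_1] \;=\; [\rho_{10}] - [\rho_{00}], \]
and $d := q^{m_1}[c_1]$, which lies in $\Z$ as $m_1 > 0$ and $[c_1] \in \Z$, closes the diagram.

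The hard part is the refined hill-cutting that produces $c_1$ and $c_2$: \cref{simple-hill-cutting} by itself only guarantees \emph{some} shortening, not a sub-cycle contained in a prescribed region. To keep $c_1$ above the origin, one applies the pigeonhole argument to the upward crossings of the walk at a small positive threshold; since $\pmax(\rho_{11}) > n^2$ and the walk is $k$-thin, there are enough such crossings to force two of them at different heights to share a state, and a careful ``lowest crossing first'' choice then yields a cycle whose walk stays above its anchor. The extraction of $c_2$ is symmetric.
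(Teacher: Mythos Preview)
Your argument has a genuine gap: the contiguous sub-cycle $c_1$ you want---one that starts and ends at the same $(q_1,m_1)$ with $m_1>0$ and stays at positions $\ge m_1$---need not exist, no matter how large $\pmax(\rho_{11})$ is. Take $n=2$ with states $p,q$ and a run whose positive excursion alternates states on the way up and swaps them on the way down: $p@0,\,q@1,\,p@2,\,q@3,\,p@4,\,q@5,\,q@4,\,p@3,\,q@2,\,p@1,\,p@0$. Here $\pmax=5>n^2$, the run is $2$-thin, yet no $(\text{state},\text{position})$ pair with position $\ge 1$ repeats, so there is no sub-cycle above the origin at all. Your pigeonhole over ``crossings at different heights sharing a state'' does find two heights $h<h'$ with matching states, but the resulting sub-run goes from position $h$ to position $h'$---it is not a cycle. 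What hill-cutting actually gives you is a matching pair of (up-state, down-state) at two heights, allowing you to excise \emph{two} non-adjacent pieces; the contribution of such a double excision depends on the material left in between, so it does not yield the clean additive shift you need.

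The paper sidesteps this by not insisting on deleting sub-cycles. It splits $\rho_{11}=\sigma_1\tau_1\nu_1$ with $\sigma_1,\nu_1$ returning, $\tau_1$ returning-left with $\pmax(\tau_1)>n^2$, and (say) $\pmin(\nu_1)<-n^2$. Then \cref{simple-hill-cutting} is used merely to produce \emph{some} shorter runs $\tau_0,\nu_0$ with the same endpoint states and $\pmin$ no smaller---crucially, $\tau_0,\nu_0$ are allowed to be entirely new runs, not sub-runs of $\tau_1,\nu_1$. Setting $\rho_{ij}=\sigma_1\tau_i\nu_j$ works because $\tau_i$ is inserted at cursor position $0$ and is returning-left, so $[\rho_{1j}]-[\rho_{0j}]=[\tau_1]-[\tau_0]\in\Z$ independently of which $\nu_j$ is used. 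The independence you tried to obtain geometrically (disjoint time intervals) is obtained here algebraically, from the fact that the $\tau$-slot sits between two returning factors anchored at position $0$.
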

  \begin{proof}
    Since $\pmax(\rho_{11})>n^2$ and $\pmin(\rho_{11})< -n^2$, we can
    decompose $\rho_{11}=\sigma_1\tau_1\nu_1$ such that
    $\sigma_1,\nu_1\in\retruns[k]{\calA}$ and $\tau_1\in\leftruns[k]{\calA}$
    and $\pmax(\tau_1)>n^2$ and either $\pmin(\sigma_1)<-n^2$ or
    $\pmin(\nu_1)<-n^2$. (Note that none of $\sigma_1$, $\tau_1$, $\nu_1$
    needs to be a cycle.) Without loss of generality, we assume
    $\pmin(\nu_1)<-n^2$.

    According to \cref{simple-hill-cutting}, there are
    $\nu_0,\tau_0\in\retruns[k]{\calA}$ with $|\nu_0|<|\nu_1|$ and
    $|\tau_0|<|\tau_1|$ and $\pmin(\nu_0)\ge\pmin(\nu_1)$ and
    $\pmin(\tau_0)\ge\pmin(\tau_1)$. Since $\tau_1\in\leftruns[k]{\calA}$, this
    implies $\tau_0\in\leftruns[k]{\calA}$.
    Define
    \begin{align*}
      \rho_{01}&=\sigma_1\tau_0\nu_1 & \rho_{11}=\sigma_1\tau_1\nu_1 \\
      \rho_{00}&=\sigma_1\tau_0\nu_0 & \rho_{10}=\sigma_1\tau_1\nu_0
    \end{align*}
    Then with $d=[\tau_1]-[\tau_0]$, we have $[\rho_{1i}]=[\rho_{0i}]+d$ for $i=0$ and $i=1$.
  \end{proof}

  We are now prepared to prove \cref{four-runs}.

  \begin{proof}[Proof of \cref{four-runs}]
    Write $\rho=\rho_{11}$, let $f(n,k)=3k(n^2+1)$, and suppose
    $|\rho|>3k(n^2+1)$. Then $\pmax(\rho)+1 > 3(n^2+1)$ by \cref{length-vs-max},
    so $\pmax(\rho)\ge 3(n^2+1)$
    and, in particular, we can decompose
    $\rho=\sigma\tau\nu$ so that $\sigma$ is the shortest prefix of
    $\rho$ with $\pmax(\sigma)=2(n^2+1)$ and $\sigma\tau$ is the longest
    prefix of $\rho$ with $\pmax(\sigma\tau)=2(n^2+1)$.  Since
    $\pmax(\rho)\ge 3(n^2+1)$, we have $\pmax(\tau)>n^2$.  We distinguish
    two cases.
    \begin{enumerate}
    \item Suppose $\pmin(\tau)< -n^2$. Then \cref{four-runs-simple} yields
      runs $\tau_{00}$, $\tau_{01}$, and $\tau_{10}$ so that for some $d\in\Z$, we have
      \[
      \begin{matrix}
        \tau_{01} & \shorterdiff[d] & \tau_{11} \rlap{${}= \tau$} \\
        \vshorterdiff &        & \vshorterdiff \\
        \tau_{00} & \shorterdiff[d] & \tau_{10}
      \end{matrix}
      \]
      and $\pmin(\tau')\ge\pmin(\tau)$ for every
      $\tau'\in\{\tau_{00},\tau_{01},\tau_{10}\}$. We set $\rho_{ij}=\sigma\tau_{ij}\nu$. Then each $\rho_{ij}$ belongs to $\leftruns[k][p \to p']{\calA}$ and we even have
      \[
      \begin{matrix}
        \rho_{01} & \shorterdiff[d] & \rho_{11} \\
        \vshorterdiff &        & \vshorterdiff \\
        \rho_{00} & \shorterdiff[d] & \rho_{10}
      \end{matrix}
      \]
      which implies \cref{four-runs-relations}.
      
    \item Suppose $\pmin(\tau)\ge -n^2$.  In this case,
      \cref{simple-hill-cutting} yields a run $\tau'\in\retruns[k]{\calA}$
      with $|\tau'|<|\tau|$ and $\pmin(\tau')\ge \pmin(\tau)$.
      
      Since now $\pmin(\tau)\ge -n^2$ and $\pmin(\tau')\ge -n^2$,
      we can decompose $\sigma=\sigma_1\sigma_2\sigma_3$ and
      $\nu=\nu_3\nu_2\nu_1$ so that
      \begin{itemize}
      \item $|\sigma_2|>0$ and $|\nu_2|>0$ and
      \item $\pos{\sigma_1} + \pos{\nu_1} = 0$ and
        $\pos{\sigma_2} + \pos{\nu_2} = 0$.
      \item $\sigma_1\sigma_3\tau\nu_3\nu_1$ and
        $\sigma_1\sigma_3\tau'\nu_3\nu_1$ again belong to
        $\leftruns[k]{\calA}$.
      \end{itemize}
      For ease of notation, we write $\tau_1=\tau$ and $\tau_0=\tau'$.
      We define
      \begin{align*}
        \rho_{01}&=\sigma_1\sigma_3\tau_1\nu_3\nu_1 & \rho_{11}&=\sigma_1\sigma_2\sigma_3\tau_1\nu_3\nu_2\nu_1 \\
        \rho_{00}&=\sigma_1\sigma_3\tau_0\nu_3\nu_1 & \rho_{10}&=\sigma_1\sigma_2\sigma_3\tau_0\nu_3\nu_2\nu_1 
      \end{align*}
      (where $\rho_{11}$ is repeated just for illustration). Then
      clearly the length relationships claimed in
      \cref{four-runs-relations} are satisfied.  Let
      $h_1=\pos{\sigma_1}$ and $h_2=\pos{\sigma_2}$. Then both for $i=0$ and for $i=1$, we have
      \begin{align*}
        [\rho_{1i}]&=[\sigma_1]+q^{h_1}[\sigma_2]+q^{h_1+h_2}[\sigma_3\tau_i\nu_3]+q^{h_1+h_2}[\nu_2]+q^{h_1}[\nu_1] \\
        [\rho_{0i}]&=[\sigma_1]+q^{h_1}[\sigma_3\tau_i\nu_3]+q^{h_1}[\nu_1].
      \end{align*}
      Therefore, with $d=(1-q^{h_2})[\sigma_1]+q^{h_1}[\sigma_2]+q^{h_1+h_2}[\nu_2]+q^{h_1}(1-q^{h_2})[\nu_1]$, we have
      \begin{align*}
        [\rho_{1i}]&=q^{h_2}[\rho_{0i}]+[\sigma_1]-q^{h_2}[\sigma_1]+q^{h_1}[\sigma_2]+q^{h_1+h_2}[\nu_2]+q^{h_1}[\nu_1]-q^{h_1+h_2}[\nu_1] \\
                &= q^{h_2}[\rho_{0i}] + d
      \end{align*}
      This means that indeed $\rho_{01}\shorter[d]\rho_{11}$ and $\rho_{00}\shorter[d]\rho_{10}$. \qedhere
    \end{enumerate}
  \end{proof}
\fi
\cref{thin-runs-small-non-divisible} applies
\cref{four-runs} to construct small numbers in $[\leftruns[k]{\calA}]$
that are not divisible by a given $m$. Later, these numbers allow us to
compute $\gcd([\leftruns[k][p\to p]{\calA}])$ and bound
$F([\leftruns[k][p\to p]{\calA}])$.
\begin{lemma}\label{thin-runs-small-non-divisible}
  There is a polynomial $f$ such that the following holds. Let $m\in\Z$.
  Let $\calA$ be an $n$-state automaton over $\BS(1,q)$ and let $p,p'$ be two states of $\calA$.
  Suppose
  there is a number in $[\leftruns[k][p \to p']{\calA}]$ not divisible by~$m$;
  then there is also an $s\in[\leftruns[k][p \to p']{\calA}]$ not divisible by~$m$ such that
  $|s|\le q^{f(n,k)}$.
\end{lemma}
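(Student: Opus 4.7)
The plan is to apply Lemma~\ref{four-runs} to derive the bound by a minimality argument. Take $s = [\rho]$ where $\rho \in \leftruns[k][p \to p']{\calA}$ is a \emph{shortest} run whose production is not divisible by $m$. If we can show $|\rho| \le f_4(n,k)$, where $f_4$ is the polynomial from Lemma~\ref{four-runs}, then Lemma~\ref{length-vs-magnitude} gives $|s| \le q^{2|\rho|} \le q^{2 f_4(n,k)}$, so we set $f(n,k) = 2 f_4(n,k)$.

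Suppose for contradiction that $|\rho| > f_4(n,k)$. Apply Lemma~\ref{four-runs} to $\rho_{11} := \rho$ to obtain $\rho_{00}, \rho_{01}, \rho_{10} \in \leftruns[k][p \to p']{\calA}$ all strictly shorter than $\rho_{11}$, together with integers $d, \ell_0, \ell_1$ such that
\[
[\rho_{11}] = \ell_1 [\rho_{01}] + d, \qquad [\rho_{10}] = \ell_0 [\rho_{00}] + d.
\]
We now produce a strictly shorter run whose production is not divisible by $m$, contradicting the choice of $\rho$. Work modulo $m$ and split on whether $d \equiv 0 \pmod m$:
\begin{itemize}
\item If $d \not\equiv 0 \pmod m$, look at $\rho_{00}$. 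If $[\rho_{00}] \not\equiv 0 \pmod m$, then $\rho_{00}$ is a shorter witness. Otherwise $[\rho_{10}] \equiv d \not\equiv 0 \pmod m$, so $\rho_{10}$ is a shorter witness.
\item If $d \equiv 0 \pmod m$, then $[\rho_{11}] \equiv \ell_1 [\rho_{01}] \pmod m$. Since the left-hand side is nonzero mod $m$, so is $[\rho_{01}]$, and $\rho_{01}$ is a shorter witness.
\end{itemize}
In either case we contradict minimality of $\rho$, so $|\rho| \le f_4(n,k)$ as required.

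The main obstacle is nothing beyond assembling the pieces: the case analysis above is short, and both Lemma~\ref{four-runs} and Lemma~\ref{length-vs-magnitude} are already in hand. The one subtlety to be careful about is that Lemma~\ref{four-runs} a priori allows the two horizontal relations to use \emph{different} scaling factors $\ell_0$ and $\ell_1$; the proof above is written so that this does not matter, since we only ever use one relation at a time modulo $m$.
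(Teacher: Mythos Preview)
Your proof is correct and follows essentially the same approach as the paper: take a shortest run with production not divisible by $m$, apply Lemma~\ref{four-runs}, and derive a contradiction modulo $m$ from the two horizontal relations, then bound $|s|$ via Lemma~\ref{length-vs-magnitude}. The only cosmetic difference is that the paper argues by first noting that minimality forces $[\rho_{00}]\equiv[\rho_{10}]\equiv[\rho_{01}]\equiv 0\pmod m$ and then obtains both $d\equiv 0$ (from the bottom row) and $d\not\equiv 0$ (from the top row), whereas you do an explicit case split on $d\bmod m$; the content is the same, and your remark about the possibly distinct factors $\ell_0,\ell_1$ is well taken.
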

\begin{proof}
  Let $f$ be the polynomial from \cref{four-runs}. Let
  $\rho\in\leftruns[k][p \to p']{\calA}$ be of minimal length such that
  $m$ does not divide $[\rho]$. Suppose
  $|\rho|>f(n,k)$.  Write $\rho_{11}=\rho$ and apply
  \cref{four-runs}. By minimality of $\rho_{11}$, we get
  $[\rho_{00}]\equiv [\rho_{10}]\equiv [\rho_{01}] \equiv 0\bmod{m}$.
  In particular, $\rho_{00}\shorter[d]\rho_{10}$ implies
  $d\equiv 0\bmod{m}$.  However, since $\rho_{01}\shorter[d]\rho_{11}$
  and $[\rho_{11}]\not\equiv 0\bmod{m}$, we get
  $d\not\equiv 0\bmod{m}$, a contradiction. Hence,
  $|\rho|\le f(n,k)$ and thus $|[\rho]|\le q^{2f(n,k)}$ by
  \cref{length-vs-magnitude}.
\end{proof}
With \cref{thin-runs-small-non-divisible} in hand, one can show
\cref{pspace-gcd-frobenius} similarly to \cref{lem:star-left-runs}.

\iflongversion
  \begin{proof}[Proof of \cref{pspace-gcd-frobenius}]
    Denote $S=[\leftruns[k][p\to p]{\calA}]$ and suppose
    $S\ne\emptyset$. Let $f_1$ be the polynomial from
    \cref{thin-runs-shortest-run}.  Then the shortest run in
    $\leftruns[k][p\to p]{\calA}$ has length $\le f_1(n,k)$. We can
    therefore guess a run $\rho$ of length $\le f_1(n,k)$.

    If we write $r=[\rho]$, then $|r|\le q^{2f_1(n,k)}$ by
    \cref{length-vs-magnitude}.  We can thus compute $r$ in polynomial
    space. Note that $g=\gcd(S)$ divides $r$ and thus
    $g\le q^{2f_1(n,k)}$. Let us now describe how to compute $g$ and a
    bound $B\ge F(S)$.

    We first consider the case $S\subseteq\N$. We compute the
    decomposition $r=p_1^{e_1}\cdots p_m^{e_m}$ into prime powers.  Note
    that each $e_i$ is at most polynomial. For each $i\in[1,m]$, there
    exists a $d_i\in[0,e_i]$ such that $S\subseteq p_i^{d_i}\cdot\N$ but
    $S\not\subseteq p_i^{d_i+1}\cdot\N$. We can compute $d_i$ in
    polynomial space, because we can construct a succinct finite
    automaton for $\pe(S)$ and, for every polynomially bounded $\ell$, we
    can construct a succinct automaton for
    $\pe(\N\setminus p_i^{\ell}\cdot\N)$: The latter keeps a remainder
    modulo $p_i^{\ell}$ in its state, accepting if this remainder is
    non-zero.  Thus, given a candidate $d_i$, we can construct a
    succinct automaton for $\pe(S\cap (\N\setminus p_i^{d_i}\cdot\N))$
    and one for $\pe(S\cap (\N\setminus p_i^{d_i+1}\cdot\N))$ and
    verify in $\PSPACE$ that the former is empty and the latter is not.
    Observe that now $\gcd(S)=p_1^{d_1}\cdots p_m^{d_i}$, meaning we can
    compute $\gcd(S)$ with polynomially many bits.

    We now compute a bound $B\ge F(S)$.  Let $f_2$ be the polynomial
    from \cref{thin-runs-small-non-divisible}. Since
    $S\cap (\N\setminus p_i^{d_i+1}\cdot\N)$ is non-empty,
    \cref{thin-runs-small-non-divisible} tells us that there is a number
    $n_i\in S\cap (\N\setminus p^{d_i+1}\cdot\N)$ with
    $n_i\le q^{f_2(n,k)}$. We can therefore guess a number $n_i\in\N$
    with polynomially many digits and verify that
    $n_i\in S\cap (\N\setminus p_i^{d_i+1}\cdot\N)$.
    
    Since $p_i^{d_i+1}$ does not divide $n_i\in S$, we know that the set
    $T=\{r,n_1,\ldots,n_m\}$ satisfies
    $\gcd(T)=p_1^{d_1}\cdots p_m^{d_m}=\gcd(S)$. Therefore, the sets
    $T^*$ and $S^*$ are ultimately identical. Since trivially
    $T^*\subseteq S^*$, we may conclude $F(S)\le F(T)$. Moreover,
    according to \cref{frobenius-finite-set}, we have
    $F(T)\le (\pmax\{r,n_1,\ldots,n_m\})^2$ and we set
    $B:=(\pmax\{r,n_1,\ldots,n_m\})^2$.  Since $r\le q^{2f_1(n,k)}$ and
    $n_i\le q^{f_2(n,k)}$, we know that $B$ is at most
    $q^{4f_1(n,k)+2f_2(n,k)}$ and can clearly be computed from $r$,
    $n_1,\ldots,n_m$. This completes the case $S\subseteq\N$.

    In the case $S\subseteq-\N$, we can proceed analogously. If $S$
    contains a positive number and a negative number, we compute
    $\gcd(S)$ as above (replacing $\N$ with $\Z$) and can set $B=0$
    because $F(S)=0$.
    %
    %
  \end{proof}
\fi


\section{Recognizability}\label{recognizability}
In this section, we prove \cref{main-recognizability}.  We first
present a characterization of recognizability that is easily checkable
for PE-regular subsets. It is well-known that a subset $S$ of $\Z$ is
recognizable if and only if there is a $k\in\Z\setminus\{0\}$ such
that for every $s\in\Z$, we have $s\in S$ if and only if $s+k\in S$.
Our characterization is an analog for Baumslag-Solitar groups.

A subset $S\subseteq\Z[\tfrac{1}{q}]\rtimes\Z$ 
is called \emph{$k$-periodic} if for every $s\in\Z[\tfrac{1}{q}]\rtimes\Z$, we have (i)~$s\in S$ if and only if $s(0,k)\in S$ and (ii)~for every $\ell\in\Z$, we have $s\in S$ if and only if $s(q^{\ell}-q^{\ell+k},0)\in S$.
In other words, membership in $S$ is insensitive to (i)~moving the
cursor $k$ positions and (ii)~replacing a power of $q$ by another
power of $q$ whose exponent differs by $k$. The set $S$ is
\emph{periodic} if it is $k$-periodic for some $k\ge 1$. We show the
following:
\begin{restatable}{proposition}{recognizableIffPeriodic}\label{recognizable-iff-periodic}
  A subset $S\subseteq\Z[\tfrac{1}{q}]\rtimes\Z$ is recognizable if
  and only if $S$ is periodic.
\end{restatable}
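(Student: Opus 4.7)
The plan is to prove both directions by working with the subgroup $N \leq \BS(1, q)$ generated by $(0, k)$ and all $(q^\ell - q^{\ell+k}, 0)$ for $\ell \in \Z$. For the direction $(\Rightarrow)$ I would extract $k$ from a recognizing morphism as the order of the image of $t$; for $(\Leftarrow)$ I would construct the finite quotient $\BS(1,q)/N$ by hand.

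For $(\Rightarrow)$, assume $S = \varphi^{-1}(\varphi(S))$ for some morphism $\varphi \colon \BS(1,q) \to F$ into a finite group $F$, and let $k$ be the order of $\varphi(t)$ in $F$. Condition~(i) is then immediate from $\varphi((0, k)) = \varphi(t)^k = 1_F$. For~(ii), observe that $(q^\ell, 0) = t^\ell a t^{-\ell}$, so $\varphi((q^\ell, 0)) = \varphi(t)^\ell \varphi(a) \varphi(t)^{-\ell}$; since $\varphi(t)^k = 1_F$, this coincides with $\varphi((q^{\ell+k}, 0))$. Hence $\varphi((q^\ell - q^{\ell+k}, 0)) = 1_F$ for every $\ell \in \Z$, which gives~(ii).

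For $(\Leftarrow)$, assume $S$ is $k$-periodic. Iterating the two periodicity equivalences and their inverses shows that, for every $h \in N$, we have $s \in S \iff sh \in S$, so $S$ is a union of right cosets of $N$. A direct computation identifies $N$ with $\{(r, mk) \mid r \in (1 - q^k)\Z[\tfrac{1}{q}],\ m \in \Z\}$, and checking conjugation of the generators of $N$ by $a$ and $t$ (for instance, $a^{-1}(0,k)a = (q^k - 1, k) = (q^k-1,0)(0,k) \in N$, and $t^{-1}(q^\ell - q^{\ell+k}, 0)t = (q^{\ell-1} - q^{\ell+k-1}, 0) \in N$) shows that $N$ is a normal subgroup of $\BS(1, q)$. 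Hence the projection $\pi \colon \BS(1,q) \to \BS(1,q)/N$ is a group morphism and $S = \pi^{-1}(\pi(S))$.

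The main point, and the only possible obstacle, is that $\BS(1,q)/N$ is actually finite. Its cardinality factors as $k \cdot [\Z[\tfrac{1}{q}] : (1 - q^k)\Z[\tfrac{1}{q}]]$, so everything reduces to showing that the subgroup $(1 - q^k)\Z[\tfrac{1}{q}]$ has finite index in $\Z[\tfrac{1}{q}]$. The key observation is that $q$ is invertible modulo $q^k - 1$, with inverse $q^{k-1}$, since $q \cdot q^{k-1} = q^k \equiv 1 \pmod{q^k - 1}$. This yields a well-defined surjective ring morphism $\Z[\tfrac{1}{q}] \to \Z/(q^k - 1)\Z$ sending $1/q \mapsto q^{k-1}$, whose kernel is exactly $(1 - q^k)\Z[\tfrac{1}{q}]$. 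Consequently $|\BS(1, q)/N| = k(q^k - 1)$ is finite, making $S$ recognizable and completing the proof.
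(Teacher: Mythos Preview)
Your proof is correct and follows the same overall strategy as the paper: both directions hinge on the subgroup generated by $(0,k)$ and the elements $(q^\ell-q^{\ell+k},0)$, one checks normality by conjugating generators, and one shows the quotient is finite. The forward direction is essentially identical to the paper's, just phrased via the order of $\varphi(t)$ rather than picking any $k$ with $\varphi((0,k))=1$.

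Where you diverge is in the finiteness argument for $G/N$. The paper proceeds by explicit coset reduction: it shows, in three hands-on steps, that every element can be brought to the form $(r,n)$ with $|r|<q^k$ and $0\le n<k$, giving only an upper bound on $|G/N|$. You instead identify $N$ exactly as $(1-q^k)\Z[\tfrac1q]\rtimes k\Z$ and then compute the index ring-theoretically, using that $q$ is a unit modulo $q^k-1$ to get $\Z[\tfrac1q]/(1-q^k)\Z[\tfrac1q]\cong\Z/(q^k-1)\Z$. This is cleaner and yields the exact order $|G/N|=k(q^k-1)$; the paper's argument is more elementary but less sharp. Both are valid, and your ring-morphism trick is a nice touch that avoids the somewhat ad~hoc three-step reduction.
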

\iflongversion
  \begin{proof}
    Recall that $S$ is $k$-periodic if
    \begin{align}
      s\in S \iff s(0,k)\in S ~~~~\text{and}~~~~s\in S\iff s(q^\ell-q^{\ell+k},0)\in S~\text{for every $\ell\in\Z$}.\label{k-periodic}
    \end{align}
    Suppose $S$ is recognizable with a morphism
    $\varphi\colon \Z[\tfrac{1}{q}]\rtimes\Z\to K$ for some finite group
    $K$. Then there must be some $k\in\Z\setminus\{0\}$ with $\varphi((0,k))=1$:
    Otherwise, the map $\Z\to K$, $m\mapsto \varphi((0,m))$ would be injective,
    which is impossible for finite $K$. Now $\varphi((0,k))=1$ implies that
    $s\in S$ if and only if $s(0,k)\in S$ and thus the left equivalence in \cref{k-periodic}. Moreover, since
    \[ (q^\ell-q^{\ell+k},0)= (q^{\ell},0)(0,k)(-q^{\ell},0)(0,-k),\]
    we
    have $\varphi((q^{\ell}-q^{\ell+k},0))=1$ and hence $S$ satisfies
    the right equivalence in \cref{k-periodic}.  Thus $S$ is
    $k$-periodic.

    Suppose $S$ is $k$-periodic for $k\ge 1$ and consider the subgroup
    $H$ of $G=\Z[\tfrac{1}{q}]\rtimes\Z$ generated by $(0,k)$ and by
    $(q^\ell-q^{\ell+k},0)$ for all $\ell\in\Z$. We claim that $H$ is
    normal and the quotient $G/H$ is finite.  For normality, we have to
    check that for every generator $h$ of $H$ and every generator $g$ of
    $G$, we have $ghg^{-1}\in H$. Since $G$ is generated by $(1,0)$ and
    $(0,1)$, we have to consider the following cases:
    \begin{itemize}
    \item Let $h=(0,k)$ and $g=(1,0)$. Then $ghg^{-1}=(1,0)(0,k)(-1,0)=(q-q^k,0)$.
    \item Let $h=(0,k)$ and $g=(0,1)$. Then $ghg^{-1}=(0,1)(0,k)(0,-1)=(0,k)$.      
    \item Let $h=(q^{\ell}-q^{\ell+k},0)$ and $g=(1,0)$. Then $ghg^{-1}=(1,0)(q^{\ell}-q^{\ell+k},0)(-1,0)=(q^{\ell}-q^{\ell+k},0)$.
    \item Let $h=(q^{\ell}-q^{\ell+k},0)$ and $g=(0,1)$. Then $ghg^{-1}=(0,1)(q^{\ell}-q^{\ell+k},0)(0,-1)=(q^{\ell+1}-q^{\ell+1+k},0)$.        
    \end{itemize}
    In each case, $ghg^{-1}$ clearly belongs to $H$, hence $H$ is normal.

    We may therefore consider the quotient group $G/H$ and the
    projection $\pi\colon G\to G/H$.  Note that since $S$ is
    $k$-periodic, we know that for $s\in S$ if and only if $sh\in S$ for
    any $s\in S$ and $h\in H$. Therefore, if $\pi(s)=\pi(s')$, then
    $s\in S$ if and only $s'\in S$. Thus, $S$ is recognized by the morphism $\pi$
    and it suffices to show that $G/H$ is finite.

    We prove this by showing that for any $(\frac{p}{q^\ell},m)\in G$,
    we can multiply elements from $H$ to obtain an element $(r,n)$ with
    $r\in\{0,\pm 1,\pm 2,\ldots,\pm q^{k}-1\}$ and
    $n\in\{0,1,\ldots,k-1\}$. Since there are only finitely many
    elements of the latter shape, this clearly implies finiteness of
    $G/H$. We do this in three steps. We first transform the left
    component into a natural number. Then we turn the left component
    into a number in $\{0,\pm 1,\pm 2,\ldots,\pm q^{k}-1\}$. Finally. we
    bring the right component to a number in $\{0,\ldots,k-1\}$.

    For the first step, consider the element
    $g=(\frac{p}{q^{\ell}},m)\in G$.  By multiplying
    $(-q^{-m-\ell}+q^{-m-\ell+k},0)^p\in H$ to $g$, we obtain
    $(\frac{p}{q^{\ell-k}},m)$. If we repeat this, we end up with an
    element $(p,m)$ with $p\in\Z$ and $m\in\Z$.

    For the second step, consider $(p,m)\in G$ with $p,m\in\Z$. If
    $p\ge q^k$, we multiply with $(1-q^k,0)$ and obtain $(p+1-q^k,0)$,
    where $p+1-q^{k}<p$ (because $k\ge 1$). By repeating this, we end up
    at an element $(p,m)$ with $0\le p<q^k$.  In the case $p<-q^{k}$, we
    just multiply $(-q+q^{k}, 0)=(q-q^k,0)^{-1}$ instead of
    $(q-q^k,0)$. Thus, in general, we obtain an element $(p,m)$ with
    $p\in\{0,\pm 1,\pm 2,\ldots,\pm q^k-1\}$.

    For the third step, we merely reduce the right component modulo $k$:
    By multiplying $(0,k)$ or $(0,-k)$, we can clearly obtain an element
    $(p,m)$ where $m\in\{0,1,\ldots,k-1\}$ and where still $p\in\{0,\pm
    1,\pm 2,\ldots,\pm q^k-1\}$. Thus
    $G/H$ is is finite and recognizability of $S$ follows.
  \end{proof}
\else
  The fact that recognizable sets are periodic is an easy exercise. For
  the converse, we show that the subgroup $H$ of
  $G=\Z[\tfrac{1}{q}]\rtimes\Z$ generated by $(0,k)$ and all
  $(q^\ell-q^{\ell+k},0)$ for $\ell\in\Z$ is normal and the quotient
  $G/H$ is finite. Then, $S$ is recognized by the projection $G\to G/H$.
\fi

To decide whether a PE-regular $R\subseteq\BS(1,q)$ is recognizable, we
show effective regularity of the set $N\subseteq\{\ltr{a}\}^*$ of
all words $\ltr{a}^k$ such that $R$ is \emph{not} $k$-periodic. Then,
we just have to check whether $N$ contains all words $\ltr{a}^k$ with $k\ge 1$, which is clearly decidable. Since $R$ is PE-regular, the set
$D=R(G\mathord{\setminus} R)^{-1}\cup (G\mathord{\setminus} R)R^{-1}$
is effectively PE-regular (\cref{main-closure-properties}). Then $R$ is
not $k$-periodic if and only if $(0,k)\in D$ or
$(q^{\ell}-q^{\ell+k},0)\in D$ for some $\ell\in\Z$. The element
$(0,k)$ has the pointed expansion $\cursor{0}0^{k-1}\rpoint{0}$.  The
pointed expansions of $(q^\ell-q^{\ell+k},0)$ for $\ell\in\Z$ are
exactly those words obtained from words $-0^{r}(q-1)^{k-1}0^{s}$ for
$r,s\in\N$ by decorating one of the digits with $\cursor{}$ and with
$\rpoint{}$, and removing leading or trailing $0$'s. Therefore, it is
easy to see that $T_1=\{(\cursor{0}0^{k-1}\rpoint{0}, \ltr{a}^k) \mid k\ge 1\}$
and
$T_2=\{ (\pe((q^{\ell}-q^{\ell+k},0)), \ltr{a}^k) \mid
\ell\in\Z,~k\ge 1\}$ are rational transductions.  This implies that
$N=T_1(\pe(D))\cup T_2(\pe(D))\subseteq\ltr{a}^*$
is effectively regular. Then clearly,
$R$ is not $k$-periodic if and only if $\ltr{a}^k\in
N$.


\bibliographystyle{plainurl}
\bibliography{bibliography}

\iflongversion\else
  \appendix
  \section{Missing proofs from \cref{results}}\label{appendix-results}
  \mainClosureProperties*
\begin{proof}
  The first statement is due to the fact that the regular languages
  form an effective Boolean algebra and that the set of all $\pe(g)$
  for $g\in\BS(1,q)$ is regular.
  
  It is easy to construct an automaton $\calM$ over
  $\Gamma^*\times\Gamma^*\times\Gamma^*$, for suitable $\Gamma$, that
  accepts the relation
  $T=\{(\pe(g),\pe(h), \pe(gh)) \mid g,h\in\BS(1,q)\}$: It makes sure
  that the radix point of the word in the second component is aligned
  with the cursor position of the word in the first component. Then,
  multiplying the two elements amounts to adding up the $q$-ary
  expansions (see also \cref{lem:sum} for a more general
  statement). Given automata for $\pe(R)$ and $\pe(S)$, we can easily
  modify $\calM$ so as to accept
  $\{(\pe(g), \pe(h), \pe(gh)) \mid g\in R,~h\in S\}$.  Projecting to
  the third component then yields an automaton for the language
  $\pe(RS)$. A similar modification of $\calM$ leads to
  $\{(\pe(g),\pe(h),\pe(gh)) \mid g\in
  R,~h\in\BS(1,q),~\pe(gh)=\pe(1)\}$. Projecting to the second
  component yields an automaton for $\pe(R^{-1})$.
\end{proof}

\subsection{Proof of \cref{non-closure-intersection}}

In this section, we show that the class of rational subsets of
$\BS(1,q)$ is not closed under intersection. This is in contrast to
the PE-regular subsets, which form an effective Boolean algebra
(\cref{main-closure-properties}).

We present an example of rational subsets $R_1,R_2\subseteq\BS(1,q)$
such that $R_1\cap R_2$ is not rational. Let $R$ be the rational
subset accepted by the automaton in \cref{automaton-intersection}. In
$p_1$, it moves the cursor an even number of positions to the
right. In $p_2$, it moves an even number of positions to the left and
on the way, it adds $q$ in a subset of the even positions. In $p_3$,
it moves to the right again. Then $R$ contains all elements
$(r,m)\in\Z[\tfrac{1}{q}]\rtimes\Z$ where $r=\sum_{i\in A} q^{2i+1}$
for some finite $A\subseteq\Z$ and $m\in 2\Z$. Now consider the sets
$R_1=aR$, $R_2=Ra$, and their intersection $I=R_1\cap R_2$. Then we
have $(r,m)\in R_1$ if and only if $r=1+\sum_{i\in A} q^{2i+1}$ and
$m\in 2\Z$ for some finite $A\subseteq\Z$. Moreover, $(r,m)\in R_2$ if
and only if $r=q^m+\sum_{i\in A} q^{2i+1}$ and $m\in 2\Z$ for some
finite $A\subseteq\Z$.  Therefore, we have $(r,m)\in R_1\cap R_2$ if
and only if $r=1+\sum_{i\in A} q^{2i+1}$ and $m=0$ for some finite
$A\subseteq\Z$. Using the following \lcnamecref{bounded-precision}, we
shall conclude that $I=R_1\cap R_2$ is not rational.
\begin{restatable}{lemma}{boundedPrecision}\label{bounded-precision}
  Let $R\subseteq\Z[\tfrac{1}{q}]\rtimes\Z$ be a rational subset. If
  $R\subseteq\Z[\tfrac{1}{q}]\times\{0\}$, then there is a $k\in\N$
  with $R\subseteq\tfrac{1}{q^k}\Z\times\{0\}$.
\end{restatable}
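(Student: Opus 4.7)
The plan is to deploy the state evaluation lemma (Lemma~\ref{state-evaluation}, already proved above) after projecting the automaton onto its cursor component. The underlying idea is that if every accepted element has cursor $0$, then the cursor component is a trim automaton over $\Z$ accepting $\{0\}$, so the cursor must take a fixed value at every reachable state; in particular, the cursor stays in a bounded window, which bounds the precision of the contributions coming from $a^{\pm 1}$ edges.

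Concretely, I would begin by taking an automaton $\calA = (Q,\Sigma,E,q_0,F)$ over $\BS(1,q)$ accepting $R$. Without loss of generality assume $\calA$ is trim and that every edge carries a label in $\{a, a^{-1}, t, t^{-1}\}$ (each of which has cursor effect in $\{-1,0,1\}$). Let $\calA'$ be the automaton over $\Z$ obtained by projecting every label $(r,m) \in \BS(1,q)$ onto its second component $m \in \Z$. Since the hypothesis $R \subseteq \Z[\tfrac{1}{q}] \times \{0\}$ means every accepted production of $\calA$ has cursor $0$, the automaton $\calA'$ is a trim automaton over $\Z$ accepting exactly $\{0\}$.

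By Lemma~\ref{state-evaluation} applied to $\calA'$, there exists a state evaluation $\eta \colon Q \to \Z$ with $\eta(q_0) = 0$, and such that for every edge $(p, g, p') \in E$ one has $\eta(p') = \eta(p) + \pi_2(g)$, where $\pi_2$ denotes projection on the cursor component. Since $Q$ is finite, we can choose $k \in \N$ with $\eta(Q) \subseteq [-k,k]$. Now consider any accepting run $\rho$ of $\calA$ and any prefix $\rho'$ of $\rho$ ending in a state $p$. Uniqueness of $\eta$ forces the cursor after $\rho'$ to equal $\eta(p)$, hence to lie in $[-k,k]$. Consequently, every $a^{\pm 1}$-edge taken along $\rho$ adds to the left component an element of the form $\pm q^m$ with $m \in [-k, k]$, which belongs to $\tfrac{1}{q^k}\Z$. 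Summing these contributions shows $[\rho] \in \tfrac{1}{q^k}\Z \times \{0\}$, giving $R \subseteq \tfrac{1}{q^k}\Z \times \{0\}$.

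The only mildly delicate point is justifying that we may assume edges are labeled in $\{a^{\pm 1}, t^{\pm 1}\}$ rather than arbitrary generators, but this is standard: replace each edge labeled by a word over the generators with a path of unit-labeled edges through fresh intermediate states, which preserves both trimness and the accepted set, and can only affect the bound $k$ by a constant depending on the longest label of the original automaton. After this reduction the argument above yields the conclusion.
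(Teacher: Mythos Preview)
Your proposal is correct and follows essentially the same approach as the paper: project the automaton to the cursor component, apply Lemma~\ref{state-evaluation} to obtain a state evaluation $\eta\colon Q\to\Z$, bound its image in $[-k,k]$, and conclude that every $a^{\pm 1}$-edge contributes an element of $\tfrac{1}{q^k}\Z$. The only cosmetic difference is that you spell out the reduction to single-letter edge labels explicitly, whereas the paper dismisses it as ``without loss of generality.''
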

Intuitively, this says that if all elements in a rational subset have
the cursor in the origin, then its elements must have bounded
precision. This can be shown using a pumping argument: If $R$ did contain
elements with high powers of $q$ in the denominator, then the cursor
must move arbitrarily far to the right, but then it can also end up to
the right of the origin, which is impossible. Since
$I\subseteq\Z[\tfrac{1}{q}]\times\{0\}$ contains $(1+q^{-2i+1},0)$ for
any $i\in\N$, it cannot be rational.

For the detailed proof of \cref{bounded-precision}, it is more
convenient to argue with the well-known observation that an automaton
that accepts a fixed element has to encode the element read so far in
its state.  Let us make this formal. If $\calA=(Q,\Sigma,E,q_0,F)$ is
an automaton over a group $G$, then a \emph{state evaluation} is a map
$\eta\colon Q\to G$ such that $\eta(q_0)=1$ and for every edge
$(p,g,p')\in E$, we have $\eta(p')=\eta(p)g$. Hence, a state
evaluation assigns to each state $p$ a fixed group element $\eta(p)$
such that on any path from $q_0$ to $p$, $\calA$ reads $\eta(p)$. An
automaton is called \emph{trim} if (i)~every state is reachable from
an initial state and (ii)~from every state, one can reach a final
state.

\begin{lemma}\label{state-evaluation}
  Let $\calA$ be a trim automaton over a group $G$ that accepts the
  set $\{1\}$. Then $\calA$ admits a state evaluation.
\end{lemma}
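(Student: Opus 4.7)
The plan is to define $\eta(p)$ as the production of any run from $q_0$ to $p$, then show this is well-defined using the hypotheses, and finally verify the two required properties.

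First, I would fix, for each state $p \in Q$, some run $\rho_p$ from $q_0$ to $p$ (such a run exists because $\calA$ is trim), and set $\eta(p) = [\rho_p]$. The heart of the argument is showing that the choice of $\rho_p$ does not matter: if $\rho_1$ and $\rho_2$ are both runs from $q_0$ to $p$, then $[\rho_1] = [\rho_2]$. To see this, I would use trimness again to pick a run $\sigma$ from $p$ to the final state $q_f$. Then both $\rho_1\sigma$ and $\rho_2\sigma$ are accepting runs, so their productions lie in $\lang{\calA} = \{1\}$. This gives $[\rho_1][\sigma] = 1 = [\rho_2][\sigma]$ in $G$, and since $G$ is a group we may cancel $[\sigma]$ on the right to obtain $[\rho_1] = [\rho_2]$. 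This is where the group structure is essential — in a monoid, cancellation would fail in general.

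With well-definedness established, the two conditions of a state evaluation follow immediately. For $\eta(q_0) = 1$: the empty run at $q_0$ is a run from $q_0$ to $q_0$ with production $1$, so by well-definedness $\eta(q_0) = 1$. For the edge condition: given an edge $(p, g, p') \in E$, take any run $\rho$ from $q_0$ to $p$ (so $[\rho] = \eta(p)$); appending the edge gives a run from $q_0$ to $p'$ with production $[\rho] \cdot g = \eta(p) \cdot g$, and by well-definedness this equals $\eta(p')$.

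The main (and only) obstacle is really the well-definedness step, which requires both hypotheses of the lemma simultaneously: trimness provides a continuation $\sigma$ to a final state, the acceptance condition $\lang{\calA} = \{1\}$ ensures that both completed runs produce the identity, and the group structure of $G$ allows right-cancellation. None of the other steps involve any subtlety.
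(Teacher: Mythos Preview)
Your proof is correct and follows essentially the same approach as the paper: define $\eta(p)$ via the production of a run from $q_0$ to $p$, use trimness plus $\lang{\calA}=\{1\}$ and cancellation in $G$ to show this is independent of the chosen run, and then read off the two defining conditions of a state evaluation. The paper's argument is structured identically, with only cosmetic differences in presentation.
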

\begin{proof}
  Since $\calA$ is trim, we can choose $\eta\colon Q\to G$ such that
  for every $p\in Q$, there is a run from $q_0$ to $p$ in $\calA$ that
  reads $\eta(p)$.

  The fact that $\calA$ accepts $\{1\}$ implies that there is only one
  such $\eta$: Suppose $\rho_1$, $\rho_2$ are runs from $q_0$ to $p$
  and $\rho$ is a run from $p$ to a final state. Then since $\calA$
  accepts $\{1\}$, we have $[\rho_1][\rho]=1=[\rho_2][\rho]$ and thus
  $[\rho_1]=[\rho_2]$. Hence, $\eta$ is uniquely determined.
  
  This implies that $\eta$ is a state evaluation: We must have
  $\eta(q_0)=1$, because of uniqueness of $\eta$. Moreover, if there
  is an edge $(p,g,p')$, then we can pick a run $\rho$ from $q_0$ to
  $p$ and by uniqueness of $\eta$, we have
  $\eta(p')=[\rho]g=\eta(p)g$.
\end{proof}

Using \cref{state-evaluation}, we are ready to prove \cref{bounded-precision}.
\begin{proof}
  Suppose $\calA$ is an automaton over $\Z[\tfrac{1}{q}]\rtimes\Z$
  that accepts a subset of $\Z[\tfrac{1}{q}]\times\{0\}$. Without loss
  of generality, we may assume that $\calA$ is trim and every edge has
  a label in $\{t,t^{-1},a,a^{-1}\}$. Consider the automaton $\calA'$
  obtained from $\calA$ by projecting to the right component. Then
  $\calA'$ is a trim automaton over $\Z$ that accepts
  $\{0\}$. According to \cref{state-evaluation}, $\calA'$ admits a
  state evaluation $\eta\colon Q\to\Z$. Since $Q$ is finite, the image
  of $\eta$ is included in some interval $[-k,k]$.

  This implies that for any state $p$ of $\calA$, any element $(r,m)$
  read on a path from $q_0$ to $p$ satisfies $m\in[-k,k]$. Therefore,
  every edge labeled $a^{\pm 1}$ adds a number $s=\pm q^m$ with $m\in[-k,k]$
  to the left component. Since in this case
  $s\in \tfrac{1}{q^k}\Z$, the \lcnamecref{bounded-precision}
  follows.
\end{proof}

\subsection{Proof of \cref{non-closure-iteration}}
Let us now prove that for the set $A$ in \cref{non-closure-iteration}, the
set $A^*$ is indeed not PE-regular. We begin with an auxiliary lemma.
\begin{lemma}\label{summing-up}
  Suppose $k,m\ge 0$ and $1\le d_1\le\cdots\le d_k$ and $1\le e_1<e_2<\cdots<e_\ell$ with
  \begin{equation} \sum_{i=1}^k (1+2^{-d_i})=m+\sum_{i=1}^\ell 2^{-e_i} \label{two-sums}\end{equation}
 Then $m\ge \ell$.
\end{lemma}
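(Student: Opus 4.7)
The plan is to establish the two inequalities $m \ge k$ and $k \ge \ell$ separately; chaining them yields $m \ge \ell$. Let $S$ denote the common value of the two sums in \labelcref{two-sums}.

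For the bound $m \ge k$, observe that every summand on the LHS satisfies $1 + 2^{-d_i} \ge 1$, so $S \ge k$. On the RHS, the tail $\sum_{i=1}^\ell 2^{-e_i}$ is a sum of distinct negative powers of two with exponents $\ge 1$, hence is strictly less than $1$; so $S < m+1$. Combining, $k \le S < m+1$, and since $k$ and $m$ are integers, $k \le m$.

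For the bound $k \ge \ell$, I would proceed by induction on $k$. The base case $k=0$ forces $S=0$, whence both $m=0$ and $\ell=0$. For the inductive step, compare the situation for $k$ and $k+1$: adding the summand $1 + 2^{-d_{k+1}}$ to the LHS must be matched by the same additive change on the RHS. The integer $1$ and any carry that eventually escapes the fractional part contribute to the new value of $m$, while the effect on the multiset $\{e_1, \dots, e_\ell\}$ is obtained by binary addition of a single bit at position $d_{k+1}$. Two cases arise:
\begin{itemize}
\item If position $d_{k+1}$ is currently unoccupied, a single new bit appears and $\ell$ grows by exactly $1$.
\item If position $d_{k+1}$ is occupied, a carry propagates through some block of $r \ge 1$ consecutive $1$-bits, clearing them and setting at most one new bit at a higher position (which may end up in the integer part, in which case no new bit is set in the fractional part). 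The net change in $\ell$ is at most $1 - r \le 0$.
\end{itemize}
In either case $\ell$ grows by at most one per unit increase of $k$, so $\ell \le k$ by induction.

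The only delicate point is the carry analysis: I must make sure to track what happens when the carry chain exits the fractional part entirely (in which case $m$ absorbs it and $\ell$ strictly decreases). This is bookkeeping rather than a conceptual obstacle, and the monotonicity statement ``$\ell$ grows by at most one'' holds uniformly across all subcases, which is exactly what the induction needs.
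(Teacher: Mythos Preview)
Your proposal is correct and follows essentially the same approach as the paper: both prove $m \ge k$ via the sandwich $k \le S < m+1$ and integrality, and both prove $k \ge \ell$ by induction on $k$, analyzing the bit-flip effect of adding a single term $1 + 2^{-d_{k+1}}$ to the binary expansion. Your explicit treatment of the base case and of the carry escaping into the integer part is slightly more detailed than the paper's, but the argument is the same.
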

\begin{proof}
  We prove $m\ge k$ and $k\ge\ell$.  We begin with $m\ge k$. Let $s$
  be the value of the two sums. Then clearly $k\le s$ and $s<m+1$,
  hence $k\le m+1$. Since both $k$ and $m$ are integers, it is
  impossible that $k>m$. Thus $k\le m$.

  The inequality $k\ge\ell$ follows by induction on $k$. Suppose that
  \cref{two-sums} holds and we add $1+2^{-d_{k+1}}$. We distinguish two cases:
  \begin{itemize}
  \item If in the binary expansion on the right, there is no
    digit $2^{-d_{k+1}}$, then the new binary expansion gains one
    $1$ digit and hence $\ell$ increases by one.
  \item If there already is a digit at $2^{-d_{k+1}}$, then the new
    binary expansion is obtained by flipping some $r\ge 1$ digits
    from $1$ to $0$ and flipping one $0$ into a $1$. Hence, $\ell$
    drops by $r$ and rises by $\le 1$.
  \end{itemize}
  In any case, the value for $\ell$ rises by at most one. This proves
  $k\ge \ell$.
\end{proof}


We regard $\Z[\tfrac{1}{q}]$ as a subset of
$\Z[\tfrac{1}{q}]\rtimes\Z$ by identifying $r\in\Z[\tfrac{1}{q}]$ with
$(r,0)\in\Z[\tfrac{1}{q}]\rtimes \Z$. Then in particular for $m\in\Z$,
$\pe(m)\in\pm\{0,\ldots,q-1\}^*\{\cursor{\rpoint{0}},\ldots,\cursor{\rpoint{(q-1)}}\}$
is the $q$-ary expansion of $m$, with the additional $\cursor{{}}$
and $\rpoint{{}}$ at the right-most digit.
\begin{lemma}\label{smallest-integer}
  Let $n\in\N$. Then $n$ is the smallest number $m\in\N$ with
  $\pe(m)\cdot 1^n\in \pe(A^*)$.
\end{lemma}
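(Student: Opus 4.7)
The plan is to reduce the lemma directly to the arithmetic inequality supplied by \cref{summing-up}. Recall that $A = \{(1+2^{-i}, 0) \mid i \ge 1\}$, so every element of $A^*$ has the form $(\sum_{i=1}^k (1+2^{-d_i}),\,0)$ for some $k \ge 0$ and $1 \le d_1 \le d_2 \le \cdots \le d_k$. Moreover, the word $\pe(m)\cdot 1^n$ represents the rational number $m + 2^{-1} + 2^{-2} + \cdots + 2^{-n}$ (this is just the base-$2$ reading). So the membership $\pe(m)\cdot 1^n \in \pe(A^*)$ is equivalent to the existence of $k$ and $d_1 \le \cdots \le d_k$ with
\[ m + \sum_{i=1}^n 2^{-i} \;=\; \sum_{i=1}^k (1 + 2^{-d_i}). \]

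First, I would establish that $m = n$ is achievable. Take $k = n$ and $d_i = i$ for $i = 1, \ldots, n$; then $\sum_{i=1}^n (1 + 2^{-i}) = n + \sum_{i=1}^n 2^{-i}$, which shows $(n + \sum_{i=1}^n 2^{-i},\,0) \in A^*$ and therefore $\pe(n)\cdot 1^n \in \pe(A^*)$.

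Second, I would show minimality: if $\pe(m)\cdot 1^n \in \pe(A^*)$, then $m \ge n$. Given such an $m$, we obtain an equation of the form $\sum_{i=1}^k (1+2^{-d_i}) = m + \sum_{i=1}^n 2^{-i}$. The right-hand sum matches exactly the format in \cref{summing-up} with $\ell = n$ and $e_i = i$ for $i = 1,\ldots,n$ (these are strictly increasing and $\ge 1$). Applying that lemma yields $m \ge \ell = n$, as required.

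There is no significant obstacle: the entire content of the lemma is packaged by \cref{summing-up}, and the work amounts to verifying that $\pe(m)\cdot 1^n$ corresponds to the number $m + \sum_{i=1}^n 2^{-i}$ and matching the hypothesis pattern of that lemma. The one minor subtlety worth double-checking is the convention for the pointed expansion of an integer $m$: by the paper's convention, $\pe(m)$ places the radix point immediately after the units digit, so concatenating $1^n$ places those ones at positions $-1, -2, \ldots, -n$, which is exactly what the argument needs.
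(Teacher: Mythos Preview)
Your proposal is correct and follows essentially the same approach as the paper: you exhibit the witness $m=n$ via $\sum_{i=1}^n(1+2^{-i})$ and then invoke \cref{summing-up} with $\ell=n$ and $e_i=i$ to establish minimality. The paper's proof is identical in structure and content.
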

\begin{proof}
  Since $n+2^{-1}+\cdots+2^{-n}=\sum_{i=1}^n (1+2^{-i})$ clearly
  belongs to $A^*$, we have $\pe(n)\cdot 1^n\in\pe(A^*)$. Now
  suppose $\pe(m)\cdot 1^n\in\pe(A^*)$. Then we have
  \[ m+2^{-1}+\cdots 2^{-n}=\sum_{i=1}^k (1+2^{-d_i}) \]
  for some $k\ge 0$ and some $1\le d_1\le d_2\le \cdots \le d_k$. By
  \cref{summing-up}, this implies $m\ge n$.
\end{proof}

Now \Cref{smallest-integer} allows us to show that $\pe(A^*)$ is not
regular.  Recall that for a language $L\subseteq\Gamma^*$, a
\emph{right quotient} is a set of the form
$Lu^{-1}:=\{v\in\Gamma^* \mid vu\in L\}$. Since a regular language has
finite syntactic monoids (see, e.g.~\cite{Berstel1979}), it has only
finitely many right quotients.
Suppose $\pe(A^*)$ is regular. For each $n\in\N$, consider
the right quotient $Q_n=\pe(A^*)(1^n)^{-1}$.
Then according to \cref{smallest-integer}, for each $n\in\N$,
$n$ is the smallest number $m$ with $\pe(m)\in Q_n\cap
\pe(\Z)$. Thus, the sets
$Q_0,Q_1,Q_2,\ldots$ are pairwise distinct, contradicting the fact
that $\pe(A^*)$ has only finitely many right quotients.


  \section{Missing proofs from \cref{rational-to-regular}}\label{appendix-rational-to-regular}
  \positionpaths*
\begin{proof}
  We consider the directed multigraph \(G\) that is described by \(\pi\): the vertices
  in \(G\) are those appearing in \(\pi\), and an edge appears in \(G\) as many times as
  it does in \(\pi\).  Note that in \(G\), the in- and out-degrees of any vertex are
  equal, but for the start and end vertices of \(\pi\).

  We first note that Point 4 is true of any subpath \(\pi'\) that satisfies Point~1.
  Indeed, removing \(\pi'\) from \(G\) turns all the vertices into vertices with same
  in- and out-degrees.

  We build \(\pi'\) iteratively.  We first let \(\pi'\) be a shortest path from the
  starting vertex of \(\pi\) to its final vertex in \(G\); since it does not repeat
  any node in \(V\), its thickness is bounded by \(|Q|\).

  Now if \(\pi'\) visits all the vertices in \(V'\), we are done.  Otherwise, let \(v\)
  be a vertex in \(V'\) that \(\pi'\) does not visit; we augment \(\pi'\) with a cycle
  that includes \(v\) as follows.  Consider any shortest path from the start vertex of $\pi$ to \(v\) in
  \(G\), and let \(u\) be the last vertex of that path that appears in \(\pi'\).  Write
  \(\rho\) for the path from \(u\) to \(v\).  Since
  \(\pi - \pi'\) is a union of cycles, there is a path \(\rho'\) from \(v\) to \(u\) in \(\pi -
  \pi'\) (more details follow).  We can thus augment \(\pi'\) with the path \(\rho\rho'\) rooted at \(u\), potentially
  increasing the thickness of \(\pi'\) by \(2|Q|\).

  (In more detail, to find the path \(\rho'\), we argue as follows.
  The set of edges of \(\pi - \pi'\) forms an Eulerian multigraph,
  and so in \(\pi - \pi' - \rho\) the difference between outdegree
  and indegree is \(1\) for \(v\), \(-1\) for \(v\), and \(0\) for all
  other vertices. Therefore, constructing a walk edge by edge, starting
  from \(v\), while possible, will necessarily lead to a dead end
  at the vertex \(u\). Removing cycles from this walk will give
  a path \(\rho'\) from \(v\) to \(u\), as required.)
\end{proof}

\begin{fact}\label{uncountable-submonoids}
  The group $\Z[\tfrac{1}{q}]$ has uncountably many submonoids.
\end{fact}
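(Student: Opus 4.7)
The plan is to exhibit an uncountable family of pairwise distinct submonoids indexed by certain integer sequences. For each function $f \colon \N \to \Z$ satisfying $f(0) = 0$ and the recursive bound
\[ q \cdot f(i) - 1 \;\le\; f(i+1) \;\le\; q \cdot f(i) \qquad \text{for all } i \ge 0, \]
I will set
\[ M_f \;=\; \left\{ \tfrac{n}{q^i} \;\middle|\; i \ge 0,\; n \in \Z,\; n \ge f(i) \right\} \;\subseteq\; \Z[\tfrac{1}{q}]. \]
The first observation is that there are uncountably many such $f$: the recurrence leaves a binary choice at each step, so the family is in bijection with $\{0,1\}^\N$.

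The first key step is to show that the defining condition on $M_f$ is independent of the choice of representative $n/q^i$. Since any two representatives differ by replacing $n/q^i$ with $qn/q^{i+1}$ (and iterations thereof), it suffices to check that $n \ge f(i) \iff qn \ge f(i+1)$. The forward direction uses $f(i+1) \le q\cdot f(i)$ directly; the reverse direction uses $q\cdot f(i) - 1 \le f(i+1)$, yielding $n \ge f(i) - 1/q$, which upgrades to $n \ge f(i)$ because $n$ and $f(i)$ are integers. Hence membership in $M_f$ depends only on the element, not the representation, and moreover $n/q^i \in M_f \iff n \ge f(i)$.

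The second step is verifying that $M_f$ is a submonoid. It contains $0 = 0/q^0$ since $f(0) = 0$. For closure under addition, given $n/q^i, m/q^j \in M_f$ with $i \le j$, write the sum as $(q^{j-i}n + m)/q^j$; since $m \ge f(j)$ and $q^{j-i}n \ge 0$ when $n \ge 0$ (and more generally one iterates the preceding observation to transfer $n \ge f(i)$ to $q^{j-i}n \ge f(j)$), the sum lies in $M_f$.

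The final step is injectivity of $f \mapsto M_f$. From the well-definedness claim, $f(i)$ is recovered from $M_f$ as the least integer $n$ with $n/q^i \in M_f$. Thus distinct $f$ yield distinct $M_f$, and the uncountability of the family of admissible $f$ transfers to an uncountability of submonoids of $\Z[\tfrac{1}{q}]$. The main subtlety—and the step requiring the precise form of the recurrence on $f$—is the well-definedness check; everything else is routine.
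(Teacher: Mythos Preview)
Your argument mirrors the paper's proof almost verbatim: the same family $M_f$, the same recurrence on $f$, the same well-definedness and injectivity checks. There is, however, a genuine gap in the closure step (which the paper's own proof shares). With $f(0)=0$, the recurrence $f(i+1)\le qf(i)$ forces $f(i)\le 0$ for all $i$, so $M_f$ contains negative elements. Your parenthetical correctly transfers $n\ge f(i)$ to $q^{j-i}n\ge f(j)$, but combining this with $m\ge f(j)$ does \emph{not} yield $q^{j-i}n+m\ge f(j)$ when $f(j)$ may be negative. Concretely: for $q=2$ and $f(1)=-1$ one has $-\tfrac12\in M_f$, yet $-\tfrac12+(-\tfrac12)=-1\notin M_f$, since writing $-1=-2^k/2^k$ one checks $f(k)\ge -(2^k-1)>-2^k$ for every $k\ge 0$. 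So $M_f$ as written is not closed under addition.

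The repair is small. Take $f(0)=1$ instead of $0$ and set $M_f=\{0\}\cup\{n/q^i:n\ge f(i)\}$. The recurrence then keeps $f(i)\ge 1$ for all $i$ (since $qf(i)-1\ge q-1\ge 1$), so every nonzero element of $M_f$ is positive; closure follows because $q^{j-i}n\ge 1$ gives $q^{j-i}n+m>m\ge f(j)$. The well-definedness check is unchanged, and injectivity holds by reading off $f(i)$ as the least positive integer $n$ with $n/q^i\in M_f$.
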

\begin{proof}
  Let $q\ge 2$. Consider the functions $f\colon\N\to\Z$ that satisfy $f(0)=0$ and
  \[ q\cdot f(i)-1 \le f(i+1)\le q\cdot f(i) \]
  for every $i\ge 1$. Note
  that there are uncountably many such functions $f$: One can
  successively choose $f(1), f(2), f(3),\ldots$ and has two options
  for each value. Consider the set
  \[ M_f=\left\{\left.\frac{n}{q^i} ~\right|~ n\ge f(i)\right\}.\]
  We claim that for any
  $n,i\in\N$, we have $\tfrac{n}{q^i}\in M_f$ iff $n\ge f(i)$.  (In
  other words, it cannot happen that $\frac{n}{q^i}$ can be
  represented as $\frac{m}{q^j}$ such that $n\ge f(i)$ but not
  $m\ge f(j)$.) For this, we have to show that $n\ge f(i)$ if and only
  if $qn\ge f(i+1)$. But if $n\ge f(i)$, then
  $qn\ge q\cdot f(i)\ge f(i+1)$ by choice of $f$. Conversely, if
  $qn\ge f(i+1)$, then $n\ge \tfrac{1}{q}f(i+1)\ge f(i)-\tfrac{1}{q}$,
  which implies $n\ge f(i)$ because $n$ and $f(i)$ are integers.  This
  proves the claim.

The claim implies that $M_f$ is a submonoid of
$\Z[\tfrac{1}{q}]$: For $\frac{n}{q^i},\frac{m}{q^j}\in M_f$ with
$i\le j$, we have
$\tfrac{n}{q^i}+\frac{m}{q^j}=\frac{q^{j-i}n+m}{q^j}$ and since
$m\ge f(j)$, we clearly also have $q^{j-i}n+m\ge f(j)$ and thus
$\tfrac{n}{q^i}+\frac{m}{q^j}\in M_f$. Moreover, since $f(0)=0$, we
have $0=\tfrac{0}{q^0}\in M_f$.

Finally, the claim implies that the mapping $f\mapsto M_f$ is injective: Determining
$f(i)$ amounts to finding the smallest $n\in\N$ with $\tfrac{n}{q^i}\in M_f$.
\end{proof}

For the proof of \Cref{lem:thinreg}, we use the following result.  It is a
classical exercise to show that automata can compute the addition of numbers in
a given base. We rely on a slight extension: Using the base-\(q\)
\emph{signed-digit expansion} of integers, addition is computable by an
automaton:
\begin{lemma}[{\cite[Section 2.2.2.2]{cant10}}]\label{lem:sum}
  Let \(q \geq 2\) and \(B_q = \{-(q-1), \ldots, q-1\}\).  Words in
  \(B_q^{\;*}\) are interpreted as integers in base~\(q\).  The language of words
  over \(B_q \times B_q \times B_q\) such that the third component is the sum of the first
  two components is regular.  There is an automaton of size polynomial in \(q\)
  for that language.
\end{lemma}

\thinreg*
\begin{proof}
  For simplicity, we deal with \emph{pointed expansions} of
  productions of runs, and indicate the easy changes that need to be
  made to deal with \emph{state views} of runs at the end of the
  proof.  As we draw intuition from two-way automata, we will assume
  that the positions along a run are always changing.  This is easily
  implemented by changing the alphabet to
  \(\Sigma = \{-1, 0, 1\} \times \{-1, 1\}\), and introducing
  intermediate states when translating \((1, 0)\) to, say,
  \((1, 1)(0, -1)\). This modification can turn runs that are $k$-thin
  into runs that are $2k$-thin: In addition to the $k$ state occurrences
  from the old run, one also sees at most $k$ state occurrences resulting
  from non-moving transitions one position to the right. This, however, is
  not an issue: We perform the construction below for thickness $2k$. Then it
  is obvious from our construction that it can be adapted to only capture
  those $2k$-thin runs in which each original state occurs at most $k$ times
  in each position.

  We will prove the statement in two steps.  First, we will convert \(\calA\) into
  an automaton that reads \(k\)-tuples of letters from \(\{-1, 0, 1\}\).%
  Each
  component corresponds to one of the ``threads''
  of a run of \(\calA\) at a given position in the input.  Second, we
  apply \Cref{lem:sum} to conclude that, based on the regular language over
  \(\{-1, 0, 1\}^k\) accepted by this new automaton, we can compute the componentwise sum in
  \(\Z[\tfrac{1}{q}]\).

  \emph{(Step 1: From \(\calA\) to \(k\)-component regular language.)}\quad This is akin
  to the classical proof~\cite{shepherdson1959reduction}
  that deterministic two-way automata can be turned into
  nondeterministic one-way automata.  Indeed, since the runs we are interested
  in are \(k\)-thin, we can follow \(k\) partial executions of \(\calA\), half from
  left to right, and half from right to left, and check that the reversals of direction
  are consistent.

  In more detail, we will build a nondeterministic automaton \(\calB\), whose
  set of states is \((Q_\calA\times \{L, R\})^{\leq k}\) and alphabet is
  \(\{-1, 0, 1\}^{\leq k}\).  Each component of a given state follows a portion of a
  \(k\)-thin run; it is thus expected that the letters \(L\) and \(R\), standing for
  left and right, and specifying the direction of the partial run, alternate from component
  to component.

  We now specify the transition relation of \(\calB\).  Let \(X\) and \(Y\) be two
  states of \(\calB\) of the same size \(\ell \leq k\):
  \[X = ((p_1, d_1), \ldots, (p_\ell, d_\ell)), \quad Y = ((p_1', d_1'), \ldots, (p_\ell',
  d_\ell'))\enspace.\]
  We add a transition between \(X\) and \(Y\) labeled \((a_1, \ldots, a_\ell)\) if
  for all \(i\):
  \begin{itemize}
  \item \mbox{}\(d_i = d'_i\),
  \item if \(d_i = R\), then \((p_i, (a_i, -1), p_i')\) is an edge in \(\calA\), and
  \item if \(d_i = L\), then \((p'_i, (a_i, 1), p_i)\) is an edge in \(\calA\).
  \end{itemize}
  These transitions check the consistency of a single step.  We also add
  transitions that correspond to the initial and final transitions of runs from
  \(p\) to \(p'\) in \(\calA\) (1 and 2 below), and transitions that check reversals
  (3 and 4 below):
  \begin{enumerate}
  \item At any time, \(\calB\) can take a transition on \(\varepsilon\) that either inserts
    \((p, R)\) as the first component of the current state, or removes \((p, L)\) in
    that component;
  \item At any time, \(\calB\) can take a transition on \(\varepsilon\) that either inserts
    \((p', L)\) in the last component of the current state, or removes \((p', R)\) in
    that component;
  \item At any time, \(\calB\) can take a transition on \(\varepsilon\) that inserts two
    components \((r, L)\) and \((r, R)\) within the current state, consecutively,
    for any state \(r\);
  \item At any time, \(\calB\) can take a transition on \(\varepsilon\) that removes two
    consecutive components of the form \((r, R)\) and \((r, L)\) from the current
    state, for any state \(r\).
  \end{enumerate}
  Naturally, this is subject to the constraint that a state has at most \(k\)
  components.  Finally, we set the empty vector as the initial and final state.

  To obtain the desired automaton for \(\runs[k][p\to p']{\calA}\), we additionally
  modify \(\calB\) so that transitions of type 1 and 2 are taken exactly once.
  Moreover, in transition 1, if \((p, R)\) is inserted, then the next symbol read
  is annotated with \(\bullet\); if \((p, L)\) is removed, then the previous symbol read
  is annotated with \(\bullet\).  Similarly, transition 2 annotates the next or previous
  symbol read with \(\triangleleft\).

  The automata for \(\retruns[k][p\to p']{\calA}\) and
  \(\leftruns[k][p\to p']{\calA}\) are obtained by a regular constraint
  on~\(\calB\): a simulated run is returning if the symbol annotated with
  \(\bullet\) is also annotated with \(\triangleleft\), and it is returning-left if this is the last
  symbol.

  \emph{(Step 2: Computing the addition.)}\quad This is a simple application of
  \Cref{lem:sum}, noting that we can keep the annotations \(\bullet\) and \(\triangleleft\) as is.

  \emph{(From pointed expansions to state views.)}\quad  The automaton \(\calB\)
  above actually knows the states in which the different partial runs of \(\calA\)
  are; this is what is stored in \(\calB\)'s states.  The alphabet of \(\calB\) can
  thus be extended to \((\{-1, 0, 1\} \times Q)^{\leq k}\), in such a way that each digit
  carries the information of the state in which it was emitted.  Then Step 2
  can be changed to not only compute the addition, but also produce the collection
  of all these states.
\end{proof}


  \section{Missing proofs from \cref{complexity}}\label{appendix-complexity}
  \subsection{Rational subset membership is $\PSPACE$-hard}

Let $q \ge 2$ be fixed.
We give a reduction from the intersection nonemptiness problem
for deterministic finite automata (DFA), a $\PSPACE$-hard problem~\cite{Kozen77}.
Let $\D_1, \ldots, \D_n$, DFA over a finite alphabet $\Gamma$, $|\Gamma| \ge 2$,
form an instance of that problem.
We will describe an automaton \A over \mainbs that accepts the identity element
of \mainbs if and only if there is a word $w \in \Gamma^*$ accepted by all $\D_i$.

We first fix any injective mapping $f \colon \Gamma \to \{0, 1, \ldots, q-1\}^\ell$
for $\ell = \lceil \log_2 |\Gamma| \rceil$.
Transform $\D_1, \ldots, \D_n$ into nondeterministic
finite automata (NFA) $\D'_1, \ldots, \D'_n$ over $\{0, 1, \ldots, q-1\}$ such that
$\lang{\D'_i} = 1 \cdot f(\lang{\D_i}) \cdot 1$ for all~$i$.
It is immediate that
$\lang{\D_1} \cap \ldots \cap \lang{\D_n}$ is nonempty if and only if so is
$\lang{\D'_1} \cap \ldots \cap \lang{\D'_n}$.

We now describe the construction of the automaton~\A; it will be convenient
for us to think of the input word as being written (\emph{produced}) rather
than read by~\A.
This word over $\{-1, 0, 1\} \times \{-1,1\} \subseteq \mainbs$
corresponds to instructions
to a machine working over an infinite tape with alphabet $\{0, 1, \ldots, q-1\}$,
as per the intuition explained in \Cref{sec:prelim},
and we will think of \A as moving left and right over that tape, updating
the values in its cells.
We emphasize that this tape is \emph{not} the input tape of \A,
but instead corresponds to the actions of generators of \mainbs.

The automaton \A will subdivide the tape into $n$~tracks.
Suppose the cells of the tape are numbered, with indices $m \in \Z$;
then the $i$th track consists of all cells with indices~$x$
such that $x \equiv i \mod n$.
The automaton \A will move left and right over the tape by
producing $t = (0, 1)$ and $t^{-1} = (0, -1)$, two of the generators of \mainbs
as monoid.
Similarly,
the current cell can be updated by producing $a = (1, 0)$ and $a^{-1} = (-1, 0)$,
i.e., performing increments and decrements.
The automaton will always remember in its finite-state memory which of the tracks
the current cell belongs to.

The workings of \A are as follows.
It will enumerate $i = 1, \ldots, n$ one by one,
and for each~$i$ it will guess and print some word accepted by the NFA~$\D'_i$
on the $i$th track of the tape.
(When we refer to guessing, this corresponds to the nondeterminism in the definition
 of automata over groups.)
When incrementing~$i$, it will not only move to the $(i+1)$st track but also guess
which specific cell in this track to move to. That is, in principle, \A may
move arbitrarily far left or right over the tape.
After all values of~$i$ have been enumerated,
the automaton \A will guess some position of track~$1$ on the tape, moving
to that position. Suppose the corresponding cell is numbered~$x \in \Z$,
$x \equiv 1 \mod n$;
then \A will transition to its \emph{final phase},
performing the following sequence of operations:
\begin{enumerate}
\item\label{lb:removal}
      For $i = 1, \ldots, n$:
      perform decrement of the cell value once ($k = 1$ times), and then move to the adjacent cell
      with larger index (thus proceeding to track~$i+1$, or to track~$1$ again if $i=n$).
      \par
      We think of this \emph{sequence} of instructions as the \emph{removal} of $k$, $k = 1$.
\item Perform the following operations in a loop, taken arbitrarily many times
      (terminating after some nondeterministically chosen iteration):
      \begin{itemize}
      \item Guess an element $g \in \{0, 1, \ldots, q-1\}$.
      \item \emph{Remove} $g$ (similarly to step~\ref{lb:removal}).
      \end{itemize}
\item \emph{Remove} $1$ (as in step~\ref{lb:removal}).
\item Move to an arbitrarily chosen cell of the tape and terminate
      (i.e., transition to a final state).
\end{enumerate}
We now claim that the final configuration of the tape can be all-$0$ (i.e.,
the produced generators of \mainbs can yield the identity element of \mainbs)
if and only if there is a word accepted by all machines $\D'_i$, $i = 1, \ldots, n$.

Indeed, observe that, by the construction of \A, at the end of the simulation
of NFA $\D'_1, \ldots, \D'_n$ each track~$i$ will contain a word
of the form $1 \cdot f(w_i) \cdot 1$ where $w_i \in \lang{\D_i}$,
with zeros all around it. The words written on different tracks may or may not be aligned
with each other. Clearly, if all $w_i$ are chosen to be the same word, $w$, and
the leftmost $1$s are all aligned with each other, then in the final phase
of computation the automaton~\A can guess the word~$w$ and \emph{remove} it
(or rather, remove $1 \cdot f(w) \cdot 1$) from
the tape completely (with delimiters). After that, it can guess the location of
cell~$0$ and move to that cell---this corresponds to the product of the produced
generators being the identity of \mainbs.

Therefore, it remains to see that the final phase cannot transform the tape
configuration to all-$0$ unless all words $w_i$ are the same and the delimiting
$1$s are aligned. But for this, it suffices to observe that the final phase
(excepting the last operation)
amounts, in terms of the group \mainbs, to subtracting a number of the following
form (written in base~$q$):
\begin{equation*}
\underbrace{1 \ldots 1\vphantom{g_1}}_{n}\,
\underbrace{g_1 \ldots g_1}_{n}
\ldots
\underbrace{g_s \ldots g_s}_{n}\,
\underbrace{1 \ldots 1\vphantom{g_1}}_{n}
\enspace,
\end{equation*}
where $s \in \N$ and $g_1, \ldots, g_s \in \{0, 1, \ldots, q-1\}$
are chosen nondeterministically by \A.
If the result of subtraction is $0 \in \Z$, then the content of the tape
did indeed correspond to a number of this form. So the simulation of phase
left each track with the same content, $\ldots 0 0 1 g_1 \ldots g_s 1 0 0 \ldots$\,,
which means that $f^{-1}(g_1 \ldots g_s) \in \lang{\D_1} \cap \ldots \cap \lang{\D_n}$.

Since the construction of the automaton \A can be performed in
polynomial time (and even in logarithmic space), this completes the proof.

\subsection{Rational subset membership is in $\PSPACE$}

We begin with a simple observation.

\begin{lemma}\label{simple-hill-cutting}
  Let $\calA$ be an $n$-state automaton over $\BS(1,q)$. For every
  $\tau\in\retruns[k]{\calA}$ with $\pmax(\tau)>n^2$ or
  $\pmin(\tau)<-n^2$, there is a run $\tau'\in\retruns[k]{\calA}$ with
  $|\tau'|<|\tau|$ and $\pmin(\tau')\ge \pmin(\tau)$.
  Moreover, $\tau'$ begins and ends in the same states as $\tau$.
\end{lemma}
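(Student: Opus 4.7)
I will prove this as a classical hill-cutting (pumping) lemma for one-counter automata, viewing the action of $\calA$ on the cursor as an $n$-state one-counter system: $t^{\pm 1}$ contributes blind $\pm 1$ updates, $a^{\pm 1}$ contributes null updates, no transition tests the cursor, and the returning condition is enforced only at the endpoints. I will handle the case $\pmax(\tau)>n^2$; the case $\pmin(\tau)<-n^2$ will follow by a symmetric argument that reverses the roles of $t$ and $t^{-1}$. For a run $\tau$ of length $L$, I will write $p_t$ for the cursor value and $q_t$ for the state reached after the first $t$ edges of $\tau$.

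Setting $M=\pmax(\tau)$ and $t^*$ as the first time with $p_{t^*}=M$, I will define, for each level $v\in\{1,\dots,n^2+1\}$, the latest time $a_v\le t^*$ with $p_{a_v}=v$ and $p_t\ge v$ for all $t\in[a_v,t^*]$ (the last entry into the sub-hill above level $v$ around the peak), together with the symmetric counterpart $b_v$ on the descent. Writing $\alpha_v=q_{a_v}$ and $\beta_v=q_{b_v}$, a pigeonhole on the $n^2+1$ pairs $(\alpha_v,\beta_v)$ against the $n^2$ possible values will yield two levels $i<j$ with $(\alpha_i,\beta_i)=(\alpha_j,\beta_j)$.

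I will then form $\tau'$ by concatenating $\tau[0,a_i]$, the sub-run $\tau[a_j,b_j]$ with every cursor value shifted down by $j-i$ (this shift is admissible because transitions are cursor-blind), and $\tau[b_i,L]$. The state matches at both splice points by pigeonhole, so $\tau'$ is a valid run with the same start and end states as $\tau$, it still returns to cursor value~$0$, and it is strictly shorter than $\tau$ by $|\tau[a_i,a_j]|+|\tau[b_j,b_i]|>0$. The prefix and suffix are unmodified, while the shifted middle visits positions in $[i,M-(j-i)]$ with $i\ge 1$; since $\pmin(\tau)\le 0$ (as $\tau$ is returning), this yields $\pmin(\tau')\ge\pmin(\tau)$.

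The hard part will be verifying that $\tau'$ remains $k$-thin: the shifted middle may visit positions already visited by the unmodified prefix and suffix, so the combined visit counts must be controlled at each level. Here the choice of $a_v,b_v$ as the \emph{last} entry/exit around the peak will be key, because it will let me trade visits: the cut-out segments $\tau[a_i,a_j]$ and $\tau[b_j,b_i]$ visit each position in $[i,M-(j-i)]$ enough times to free up a thickness budget that the shifted middle can fill. This bookkeeping is the technical core of the hill-cutting argument and follows the standard template of~\cite{EtessamiWY10}.
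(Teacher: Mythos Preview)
Your overall strategy matches the paper's: project to the cursor and apply one-counter hill cutting, pigeonholing on state pairs at $n^2+1$ levels. The paper's own proof is just a one-paragraph deferral to~\cite{EtessamiWY10} and related work, so at the level of approach you and the paper agree.

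Your claimed argument for $k$-thinness of $\tau'$, however, has a real gap. You assert that the cut-out segments $\tau[a_i,a_j]$ and $\tau[b_j,b_i]$ visit each $p\in[i,M-(j-i)]$ often enough to offset the shifted middle's visits there. This fails when the retained prefix $\tau[0,a_i]$ contains an earlier excursion above level~$i$---one that dipped back below~$i$ before your ``final ascent'' and hence lies outside the cut-out---while $\tau[a_j,b_j]$ oscillates heavily just above level~$j$. Take $n=2$, $k=4$, and cursor trajectory
\[
0,1,2,3,2,1,0,1,2,3,4,5,4,5,4,5,4,3,2,1,0,
\]
which is $4$-thin with $\pmax=5>n^2$. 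Here $t^*=11$, $a_1=7$, $a_3=9$, $b_3=17$, $b_1=19$, and on a $2$-state automaton with all transitions available one can choose the state sequence so that the only matching pigeonhole pair is $(i,j)=(1,3)$. Your $\tau'$ then has trajectory $0,1,2,3,2,1,0,1,2,3,2,3,2,3,2,1,0$, which visits position~$2$ six times: two from the retained prefix excursion $0\to3\to0$, four from the shifted $4\leftrightarrow 5$ oscillation. The cut-out pieces $\tau[7,9]$ and $\tau[17,19]$ together visit position~$2$ only twice, so the budget is short by two and $\tau'$ is not $4$-thin. The references you (and the paper) cite do not track thickness, so this is a genuinely missing idea, not deferred bookkeeping. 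In this example the lemma still holds---e.g., excise a repeated-state cycle at position~$4$, which never raises thickness and never lowers $\pmin$---but your construction as written does not produce a $k$-thin witness.
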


\begin{proof}
If we consider the effect of the actions of $\calA$ on the cursor,
then the statement amounts to the following statement on $n$-state
one-counter automata with a $\Z$-counter and a single zero test
at the end of (accepted) runs:
if along a run $\tau$ the counter goes (i) above $n^2$ or (ii) below $-n^2$, then
there is a strictly shorter run $\tau'$ which begins and ends in the same states
and in which the minimum value of the counter is at least the minimum value
of the counter in $\tau$.
This can be proved using the standard hill-cutting argument
(see, e.g.,~\cite[Lemma~5]{EtessamiWY10};
 cf.~\cite[Proposition~7]{Latteux83} as well as~\cite{ChistikovCHPW19} and references therein):
in scenario~(i) one can apply it
to reduce the \emph{maximum} value of the counter whilst retaining
the minimum value; and in scenario~(ii) one can increase
the \emph{minimum} value whilst retaining the maximum one.
\end{proof}

The following is a consequence of \cref{simple-hill-cutting}.
\begin{lemma}\label{thin-runs-shortest-run}
  There is a polynomial $f$ so that for every $n$-state automaton
  $\calA$ over $\BS(1,q)$ and every two states $p,p'$ of $\calA$,
  the shortest run in $\leftruns[k][p \to p']{\calA}$
  has length $\le f(n,k)$.
\end{lemma}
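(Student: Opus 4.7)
The plan is to show that any shortest run $\rho$ in $\leftruns[k][p\to p']{\calA}$ (assuming this set is nonempty; otherwise there is nothing to prove) already satisfies $\pmax(\rho)\le n^2$. Once this is established, the length bound follows immediately from $k$-thinness: every prefix of $\rho$ has position in $\{0,1,\ldots,n^2\}$ and each such position can occur in at most $k$ prefixes, so $|\rho|+1 \le k(n^2+1)$, and one can take $f(n,k) := k(n^2+1)$.

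To bound $\pmax(\rho)$, I would argue by contradiction. Assume $\pmax(\rho)>n^2$. Since $\rho$ is returning-left, we already have $\pmin(\rho)=0$, so the second disjunct of the hypothesis of \cref{simple-hill-cutting} is irrelevant; but the first disjunct is satisfied, and the lemma hands us a strictly shorter run $\tau' \in \retruns[k]{\calA}$ sharing the endpoints $p, p'$ with $\rho$ and satisfying $\pmin(\tau') \ge \pmin(\rho) = 0$. The small point to verify is that $\tau'$ is in fact returning-\emph{left}, not merely returning: since $\tau'$ is returning we have $\pos{\tau'}=0$, and because the position starts at $0$ this forces $\pmin(\tau') \le 0$; combined with $\pmin(\tau') \ge 0$ this gives $\pmin(\tau')=0$, so $\tau' \in \leftruns[k][p\to p']{\calA}$. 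This contradicts the minimality of $\rho$.

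There is no real obstacle here: \cref{simple-hill-cutting} performs essentially all the work, and the only subtlety is confirming that hill-cutting preserves the returning-left property, which is immediate from the inequality $\pmin(\tau') \ge 0$ together with the fact that any returning run has $\pmin \le 0$.
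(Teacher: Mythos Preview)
Your argument is correct and is precisely the intended one: the paper states this lemma simply as ``a consequence of \cref{simple-hill-cutting}'' without spelling out the details, and your write-up fills these in exactly as expected (apply hill-cutting to a hypothetical shortest run with $\pmax>n^2$, observe that $\pmin(\tau')\ge 0$ together with returning forces $\pmin(\tau')=0$, then use $k$-thinness to bound the length). The length estimate is essentially the content of \cref{length-vs-max}, so your polynomial $f(n,k)=k(n^2+1)$ is the one the paper has in mind.
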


\lengthVsMagnitude*
\begin{proof}
  Let $m=\pmax(\rho)$.  Since $\rho$ is returning-left, $m$ can be at
  most $\ell/2$. Suppose in each position $i\in[0,m]$, $\rho$ adds
  $x_i\cdot q^{i}$. Then we have
  $|x_0|+\cdots+|x_m|\le \ell-2m$ and also
  \[ |[\rho]|=|x_0q^0+\cdots x_mq^m|\le |x_0|q^0+\cdots+|x_m|q^m. \]
  Under the condition $|x_0|+\cdots+|x_m|\le \ell-2m$, the expression
  on the right is clearly maximized for $x_m=\ell-2m$ and $x_i=0$ for
  $i\in[0,m-1]$. Therefore, we have $|[\rho]| \le (\ell-2m)
  q^m$. Since $\ell-2m\le q^\ell$, this implies
  $|[\rho]|\le (\ell-2m)q^m\le q^\ell\cdot q^{\ell/2}\le q^{2\ell}$.
\end{proof}

\begin{lemma}\label{length-vs-max}
  If $\rho$ is a run in $\leftruns[k]{\calA}$, then
  $|\rho|/k<\pmax(\rho)+1$.
\end{lemma}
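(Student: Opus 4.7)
The plan is to prove this via a simple double-counting argument on the positions visited by prefixes of $\rho$. First, observe that since $\rho \in \leftruns[k]{\calA}$ is returning-left, we have $\pmin(\rho) = 0$ and $\pos{\rho} = 0$, so the position of every prefix of $\rho$ lies in the set $\{0, 1, \ldots, \pmax(\rho)\}$. Therefore $\rho$ visits at most $\pmax(\rho) + 1$ distinct positions.

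Next, I would count the total number of position visits. A run of length $|\rho|$ has $|\rho| + 1$ prefixes (including the empty prefix), each of which has an associated final position. By the definition of thickness, since $\rho$ is $k$-thin, each value $n \in \Z$ occurs as the position of at most $k$ prefixes. Combining this with the previous observation that only $\pmax(\rho) + 1$ positions can occur, I obtain
\[ |\rho| + 1 \le k \cdot (\pmax(\rho) + 1). \]
Dividing by $k$ and using that the left-hand side is strictly greater than $|\rho|$ yields the desired inequality $|\rho|/k < \pmax(\rho) + 1$. There is no real obstacle here; the only mild subtlety is being careful that thickness is measured on prefix-endpoints (so one must count $|\rho| + 1$ rather than $|\rho|$), which gives strict inequality upon dividing by $k$.
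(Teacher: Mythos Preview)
Your proof is correct and follows essentially the same counting argument as the paper: bound the number of distinct positions by $\pmax(\rho)+1$ (using $\pmin(\rho)=0$), multiply by $k$ from the thinness bound, and compare against the $|\rho|+1$ prefix positions to obtain $|\rho|+1\le k(\pmax(\rho)+1)$, hence the strict inequality. Your write-up is in fact a bit more explicit than the paper's about why the ``$+1$'' appears and why the final inequality is strict.
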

\begin{proof}
  The run $\rho$ can visit at most $\pmax(\rho)+1$ distinct
  positions. But since $\rho$ is $k$-thin, it can visit each position
  at most $k$ times.  Since $\rho$ has $|\rho|$ moves, we have
  $|\rho|+1\le k(\pmax(\rho)+1)$ and thus $|\rho|<k(\pmax(\rho)+1)$,
  hence $|\rho|/k<\pmax(\rho)+1$.
\end{proof}

In order to prove \cref{four-runs}, we first show a version of
\cref{four-runs} that applies to returning runs that go far to the left
and far to the right. For runs $\rho,\rho'$ and $d\in\Z$, we write
$\rho\shorterdiff[d]\rho'$ if $|\rho|<|\rho'|$ and $[\rho']=[\rho]+d$.
\begin{lemma}\label{four-runs-simple}
  Let $\calA$ be an $n$-state automaton over $\BS(1,q)$ and let
  $p, p'$ be two states of $\calA$.
  Suppose $\rho_{11}\in\retruns[k][p \to p']{\calA}$ is such that $\pmax(\rho_{11})> n^2$
  and $\pmin(\rho_{11})<-n^2$. Then there are runs
  $\rho_{00},\rho_{10},\rho_{01},\rho_{11}\in\retruns[k][p \to p']{\calA}$ and a
  number $d\in\Z$ so that $\pmin(\rho)\ge \pmin(\rho_{11})$ for
  $\rho\in\{\rho_{00},\rho_{01},\rho_{10}\}$ and the following holds:
  \begin{equation}
    \begin{matrix}
      \rho_{01} & \shorterdiff[d] & \rho_{11} \\
      \vshorterdiff &        & \vshorterdiff \\
      \rho_{00} & \shorterdiff[d] & \rho_{10}
    \end{matrix}
    \label{four-runs-simple-relations}
  \end{equation}
\end{lemma}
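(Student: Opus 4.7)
The plan is to split $\rho_{11}$ into three consecutive pieces $\sigma_1 \tau_1 \nu_1$, each returning in its local frame, such that the middle piece $\tau_1$ captures the excursion above $n^2$ (and is in fact returning-left), while the excursion below $-n^2$ is confined to one of the two outer pieces. We then apply \cref{simple-hill-cutting} independently to $\tau_1$ and to the outer piece that dives below $-n^2$, and assemble the four runs by substitution. Because $\tau_1$ is returning, replacing it by a shorter $\tau_0$ affects the overall production only through $[\tau_1] - [\tau_0]$, independently of the choice of the outer piece, and this is what supplies the common offset $d$ in both rows of the diagram.

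For the decomposition, let $h = \pmax(\rho_{11}) > n^2$ and fix a time $i_0$ in $\rho_{11}$ at which the cursor sits at absolute position $h$. Let $i_1$ be the largest time $\leq i_0$ at which the cursor is at position $0$, and $i_2$ the smallest time $\geq i_0$ at which it returns to $0$; both exist since $\rho_{11}$ is returning. Split $\rho_{11} = \sigma_1 \tau_1 \nu_1$ at times $i_1$ and $i_2$. Then $\sigma_1$ and $\nu_1$ are returning. Between $i_1$ and $i_2$ the cursor never revisits $0$ but reaches the positive value $h$ at $i_0$; since positions change by unit steps and a zero-crossing would contradict the choice of $i_1$ and $i_2$, the cursor stays $\geq 0$ throughout $[i_1, i_2]$. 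Hence $\tau_1$ is returning-left with $\pmax(\tau_1) \geq h > n^2$. Since $\pmin(\tau_1) = 0$, the inequality $\pmin(\rho_{11}) < -n^2$ forces the minimum to be realized in $\sigma_1$ or $\nu_1$; without loss of generality assume $\pmin(\nu_1) < -n^2$.

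Now invoke \cref{simple-hill-cutting} on $\tau_1$ (using $\pmax(\tau_1) > n^2$) and on $\nu_1$ (using $\pmin(\nu_1) < -n^2$), obtaining $\tau_0, \nu_0 \in \retruns[k]{\calA}$ with the same initial and final states as $\tau_1, \nu_1$, strictly smaller lengths, and $\pmin(\tau_0) \geq 0$, $\pmin(\nu_0) \geq \pmin(\nu_1)$; in particular $\tau_0$ is returning-left. Define $\rho_{ij} = \sigma_1 \tau_i \nu_j$, with index $1$ selecting the original piece and $0$ the shortened one. Each $\rho_{ij}$ is a run from $p$ to $p'$ and is returning, and $k$-thinness is preserved because the hill-cutting behind \cref{simple-hill-cutting} only deletes edges, so the visit multisets of $\tau_0$ and $\nu_0$ at every position are contained in those of $\tau_1$ and $\nu_1$; hence the total visit count at each absolute position of $\rho_{ij}$ is bounded by that of $\rho_{11}$, which is $\leq k$.

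Since $\sigma_1$, $\tau_i$, and $\nu_j$ are all returning, their productions lie in $\Z[\tfrac{1}{q}]\times\{0\}$ and compose additively, giving $[\rho_{ij}] = [\sigma_1] + [\tau_i] + [\nu_j]$. Setting $d = [\tau_1] - [\tau_0]$, which lies in $\Z$ because $\tau_0$ and $\tau_1$ are returning-left, one obtains $[\rho_{11}] = [\rho_{01}] + d$ and $[\rho_{10}] = [\rho_{00}] + d$, while the length strict inequalities follow directly from $|\tau_0| < |\tau_1|$ and $|\nu_0| < |\nu_1|$. The bound $\pmin(\rho_{ij}) \geq \pmin(\rho_{11})$ is immediate since the middle piece contributes $\geq 0$ to the global minimum and substituting $\nu_0$ for $\nu_1$ can only raise it. The main obstacle is the decomposition step itself: the middle piece must simultaneously be returning-left (so that shortening it produces a clean additive change in the production) and contain the high excursion, while leaving the deep excursion in an outer piece so that a second independent shortening can be performed there.
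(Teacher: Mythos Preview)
Your proof is correct and follows the same approach as the paper: decompose $\rho_{11} = \sigma_1 \tau_1 \nu_1$ with $\tau_1$ returning-left and containing the high excursion, independently shorten $\tau_1$ and the outer piece that carries the low excursion via \cref{simple-hill-cutting}, and take $d = [\tau_1] - [\tau_0]$. Your explicit construction of the split (via the times $i_0, i_1, i_2$), the additivity argument for productions of returning pieces, and the remark on why $k$-thinness survives make the paper's terser reasoning more transparent.
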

\begin{proof}
  Since $\pmax(\rho_{11})>n^2$ and $\pmin(\rho_{11})< -n^2$, we can
  decompose $\rho_{11}=\sigma_1\tau_1\nu_1$ such that
  $\sigma_1,\nu_1\in\retruns[k]{\calA}$ and $\tau_1\in\leftruns[k]{\calA}$
  and $\pmax(\tau_1)>n^2$ and either $\pmin(\sigma_1)<-n^2$ or
  $\pmin(\nu_1)<-n^2$. (Note that none of $\sigma_1$, $\tau_1$, $\nu_1$
  needs to be a cycle.) Without loss of generality, we assume
  $\pmin(\nu_1)<-n^2$.

  According to \cref{simple-hill-cutting}, there are
  $\nu_0,\tau_0\in\retruns[k]{\calA}$ with $|\nu_0|<|\nu_1|$ and
  $|\tau_0|<|\tau_1|$ and $\pmin(\nu_0)\ge\pmin(\nu_1)$ and
  $\pmin(\tau_0)\ge\pmin(\tau_1)$. Since $\tau_1\in\leftruns[k]{\calA}$, this
  implies $\tau_0\in\leftruns[k]{\calA}$.
  Define
  \begin{align*}
    \rho_{01}&=\sigma_1\tau_0\nu_1 & \rho_{11}=\sigma_1\tau_1\nu_1 \\
    \rho_{00}&=\sigma_1\tau_0\nu_0 & \rho_{10}=\sigma_1\tau_1\nu_0
  \end{align*}
  Then with $d=[\tau_1]-[\tau_0]$, we have $[\rho_{1i}]=[\rho_{0i}]+d$ for $i=0$ and $i=1$.
\end{proof}

We are now prepared to prove \cref{four-runs}.

\fourRuns*
\begin{proof}
  Write $\rho=\rho_{11}$, let $f(n,k)=3k(n^2+1)$, and suppose
  $|\rho|>3k(n^2+1)$. Then $\pmax(\rho)+1 > 3(n^2+1)$ by \cref{length-vs-max},
  so $\pmax(\rho)\ge 3(n^2+1)$
  and, in particular, we can decompose
  $\rho=\sigma\tau\nu$ so that $\sigma$ is the shortest prefix of
  $\rho$ with $\pmax(\sigma)=2(n^2+1)$ and $\sigma\tau$ is the longest
  prefix of $\rho$ with $\pmax(\sigma\tau)=2(n^2+1)$.  Since
  $\pmax(\rho)\ge 3(n^2+1)$, we have $\pmax(\tau)>n^2$.  We distinguish
  two cases.
  \begin{enumerate}
  \item Suppose $\pmin(\tau)< -n^2$. Then \cref{four-runs-simple} yields
    runs $\tau_{00}$, $\tau_{01}$, and $\tau_{10}$ so that for some $d\in\Z$, we have
    \[
      \begin{matrix}
      \tau_{01} & \shorterdiff[d] & \tau_{11} \rlap{${}= \tau$} \\
      \vshorterdiff &        & \vshorterdiff \\
      \tau_{00} & \shorterdiff[d] & \tau_{10}
    \end{matrix}
    \]
    and $\pmin(\tau')\ge\pmin(\tau)$ for every
    $\tau'\in\{\tau_{00},\tau_{01},\tau_{10}\}$. We set $\rho_{ij}=\sigma\tau_{ij}\nu$. Then each $\rho_{ij}$ belongs to $\leftruns[k][p \to p']{\calA}$ and we even have
    \[
      \begin{matrix}
      \rho_{01} & \shorterdiff[d] & \rho_{11} \\
      \vshorterdiff &        & \vshorterdiff \\
      \rho_{00} & \shorterdiff[d] & \rho_{10}
    \end{matrix}
  \]
  which implies \cref{four-runs-relations}.
    
\item Suppose $\pmin(\tau)\ge -n^2$.  In this case,
  \cref{simple-hill-cutting} yields a run $\tau'\in\retruns[k]{\calA}$
  with $|\tau'|<|\tau|$ and $\pmin(\tau')\ge \pmin(\tau)$.
    
  Since now $\pmin(\tau)\ge -n^2$ and $\pmin(\tau')\ge -n^2$,
  we can decompose $\sigma=\sigma_1\sigma_2\sigma_3$ and
  $\nu=\nu_3\nu_2\nu_1$ so that
    \begin{itemize}
    \item $|\sigma_2|>0$ and $|\nu_2|>0$ and
    \item $\pos{\sigma_1} + \pos{\nu_1} = 0$ and
      $\pos{\sigma_2} + \pos{\nu_2} = 0$.
    \item $\sigma_1\sigma_3\tau\nu_3\nu_1$ and
      $\sigma_1\sigma_3\tau'\nu_3\nu_1$ again belong to
      $\leftruns[k]{\calA}$.
    \end{itemize}
    For ease of notation, we write $\tau_1=\tau$ and $\tau_0=\tau'$.
    We define
    \begin{align*}
      \rho_{01}&=\sigma_1\sigma_3\tau_1\nu_3\nu_1 & \rho_{11}&=\sigma_1\sigma_2\sigma_3\tau_1\nu_3\nu_2\nu_1 \\
      \rho_{00}&=\sigma_1\sigma_3\tau_0\nu_3\nu_1 & \rho_{10}&=\sigma_1\sigma_2\sigma_3\tau_0\nu_3\nu_2\nu_1 
    \end{align*}
    (where $\rho_{11}$ is repeated just for illustration). Then
    clearly the length relationships claimed in
    \cref{four-runs-relations} are satisfied.  Let
    $h_1=\pos{\sigma_1}$ and $h_2=\pos{\sigma_2}$. Then both for $i=0$ and for $i=1$, we have
    \begin{align*}
      [\rho_{1i}]&=[\sigma_1]+q^{h_1}[\sigma_2]+q^{h_1+h_2}[\sigma_3\tau_i\nu_3]+q^{h_1+h_2}[\nu_2]+q^{h_1}[\nu_1] \\
      [\rho_{0i}]&=[\sigma_1]+q^{h_1}[\sigma_3\tau_i\nu_3]+q^{h_1}[\nu_1].
    \end{align*}
    Therefore, with $d=(1-q^{h_2})[\sigma_1]+q^{h_1}[\sigma_2]+q^{h_1+h_2}[\nu_2]+q^{h_1}(1-q^{h_2})[\nu_1]$, we have
    \begin{align*}
      [\rho_{1i}]&=q^{h_2}[\rho_{0i}]+[\sigma_1]-q^{h_2}[\sigma_1]+q^{h_1}[\sigma_2]+q^{h_1+h_2}[\nu_2]+q^{h_1}[\nu_1]-q^{h_1+h_2}[\nu_1] \\
      &= q^{h_2}[\rho_{0i}] + d
    \end{align*}
    This means that indeed $\rho_{01}\shorter[d]\rho_{11}$ and $\rho_{00}\shorter[d]\rho_{10}$. \qedhere
  \end{enumerate}
\end{proof}

We are now prepared to prove \cref{pspace-gcd-frobenius}.
\begin{proof}[Proof of \cref{pspace-gcd-frobenius}]
  Denote $S=[\leftruns[k][p\to p]{\calA}]$ and suppose
  $S\ne\emptyset$. Let $f_1$ be the polynomial from
  \cref{thin-runs-shortest-run}.  Then the shortest run in
  $\leftruns[k][p\to p]{\calA}$ has length $\le f_1(n,k)$. We can
  therefore guess a run $\rho$ of length $\le f_1(n,k)$.

  If we write $r=[\rho]$, then $|r|\le q^{2f_1(n,k)}$ by
  \cref{length-vs-magnitude}.  We can thus compute $r$ in polynomial
  space. Note that $g=\gcd(S)$ divides $r$ and thus
  $g\le q^{2f_1(n,k)}$. Let us now describe how to compute $g$ and a
  bound $B\ge F(S)$.

  We first consider the case $S\subseteq\N$. We compute the
  decomposition $r=p_1^{e_1}\cdots p_m^{e_m}$ into prime powers.  Note
  that each $e_i$ is at most polynomial. For each $i\in[1,m]$, there
  exists a $d_i\in[0,e_i]$ such that $S\subseteq p_i^{d_i}\cdot\N$ but
  $S\not\subseteq p_i^{d_i+1}\cdot\N$. We can compute $d_i$ in
  polynomial space, because we can construct a succinct finite
  automaton for $\pe(S)$ and, for every polynomially bounded $\ell$, we
  can construct a succinct automaton for
  $\pe(\N\setminus p_i^{\ell}\cdot\N)$: The latter keeps a remainder
  modulo $p_i^{\ell}$ in its state, accepting if this remainder is
  non-zero.  Thus, given a candidate $d_i$, we can construct a
  succinct automaton for $\pe(S\cap (\N\setminus p_i^{d_i}\cdot\N))$
  and one for $\pe(S\cap (\N\setminus p_i^{d_i+1}\cdot\N))$ and
  verify in $\PSPACE$ that the former is empty and the latter is not.
  Observe that now $\gcd(S)=p_1^{d_1}\cdots p_m^{d_i}$, meaning we can
  compute $\gcd(S)$ with polynomially many bits.

  We now compute a bound $B\ge F(S)$.  Let $f_2$ be the polynomial
  from \cref{thin-runs-small-non-divisible}. Since
  $S\cap (\N\setminus p_i^{d_i+1}\cdot\N)$ is non-empty,
  \cref{thin-runs-small-non-divisible} tells us that there is a number
  $n_i\in S\cap (\N\setminus p^{d_i+1}\cdot\N)$ with
  $n_i\le q^{f_2(n,k)}$. We can therefore guess a number $n_i\in\N$
  with polynomially many digits and verify that
  $n_i\in S\cap (\N\setminus p_i^{d_i+1}\cdot\N)$.
  
  Since $p_i^{d_i+1}$ does not divide $n_i\in S$, we know that the set
  $T=\{r,n_1,\ldots,n_m\}$ satisfies
  $\gcd(T)=p_1^{d_1}\cdots p_m^{d_m}=\gcd(S)$. Therefore, the sets
  $T^*$ and $S^*$ are ultimately identical. Since trivially
  $T^*\subseteq S^*$, we may conclude $F(S)\le F(T)$. Moreover,
  according to \cref{frobenius-finite-set}, we have
  $F(T)\le (\pmax\{r,n_1,\ldots,n_m\})^2$ and we set
  $B:=(\pmax\{r,n_1,\ldots,n_m\})^2$.  Since $r\le q^{2f_1(n,k)}$ and
  $n_i\le q^{f_2(n,k)}$, we know that $B$ is at most
  $q^{4f_1(n,k)+2f_2(n,k)}$ and can clearly be computed from $r$,
  $n_1,\ldots,n_m$. This completes the case $S\subseteq\N$.

  In the case $S\subseteq-\N$, we can proceed analogously. If $S$
  contains a positive number and a negative number, we compute
  $\gcd(S)$ as above (replacing $\N$ with $\Z$) and can set $B=0$
  because $F(S)=0$.
  %
  %
\end{proof}


  \section{Missing proofs from \cref{recognizability}}\label{appendix-recognizability}
  \recognizableIffPeriodic*
\begin{proof}
  Recall that $S$ is $k$-periodic if
  \begin{align}
  s\in S \iff s(0,k)\in S ~~~~\text{and}~~~~s\in S\iff s(q^\ell-q^{\ell+k},0)\in S~\text{for every $\ell\in\Z$}.\label{k-periodic}
 \end{align}
  Suppose $S$ is recognizable with a morphism
  $\varphi\colon \Z[\tfrac{1}{q}]\rtimes\Z\to K$ for some finite group
  $K$. Then there must be some $k\in\Z\setminus\{0\}$ with $\varphi((0,k))=1$:
  Otherwise, the map $\Z\to K$, $m\mapsto \varphi((0,m))$ would be injective,
  which is impossible for finite $K$. Now $\varphi((0,k))=1$ implies that
  $s\in S$ if and only if $s(0,k)\in S$ and thus the left equivalence in \cref{k-periodic}. Moreover, since
  \[ (q^\ell-q^{\ell+k},0)= (q^{\ell},0)(0,k)(-q^{\ell},0)(0,-k),\]
  we
  have $\varphi((q^{\ell}-q^{\ell+k},0))=1$ and hence $S$ satisfies
  the right equivalence in \cref{k-periodic}.  Thus $S$ is
  $k$-periodic.

  Suppose $S$ is $k$-periodic for $k\ge 1$ and consider the subgroup
  $H$ of $G=\Z[\tfrac{1}{q}]\rtimes\Z$ generated by $(0,k)$ and by
  $(q^\ell-q^{\ell+k},0)$ for all $\ell\in\Z$. We claim that $H$ is
  normal and the quotient $G/H$ is finite.  For normality, we have to
  check that for every generator $h$ of $H$ and every generator $g$ of
  $G$, we have $ghg^{-1}\in H$. Since $G$ is generated by $(1,0)$ and
  $(0,1)$, we have to consider the following cases:
  \begin{itemize}
  \item Let $h=(0,k)$ and $g=(1,0)$. Then $ghg^{-1}=(1,0)(0,k)(-1,0)=(q-q^k,0)$.
  \item Let $h=(0,k)$ and $g=(0,1)$. Then $ghg^{-1}=(0,1)(0,k)(0,-1)=(0,k)$.      
  \item Let $h=(q^{\ell}-q^{\ell+k},0)$ and $g=(1,0)$. Then $ghg^{-1}=(1,0)(q^{\ell}-q^{\ell+k},0)(-1,0)=(q^{\ell}-q^{\ell+k},0)$.
  \item Let $h=(q^{\ell}-q^{\ell+k},0)$ and $g=(0,1)$. Then $ghg^{-1}=(0,1)(q^{\ell}-q^{\ell+k},0)(0,-1)=(q^{\ell+1}-q^{\ell+1+k},0)$.        
  \end{itemize}
  In each case, $ghg^{-1}$ clearly belongs to $H$, hence $H$ is normal.

  We may therefore consider the quotient group $G/H$ and the
  projection $\pi\colon G\to G/H$.  Note that since $S$ is
  $k$-periodic, we know that for $s\in S$ if and only if $sh\in S$ for
  any $s\in S$ and $h\in H$. Therefore, if $\pi(s)=\pi(s')$, then
  $s\in S$ if and only $s'\in S$. Thus, $S$ is recognized by the morphism $\pi$
  and it suffices to show that $G/H$ is finite.

  We prove this by showing that for any $(\frac{p}{q^\ell},m)\in G$,
  we can multiply elements from $H$ to obtain an element $(r,n)$ with
  $r\in\{0,\pm 1,\pm 2,\ldots,\pm q^{k}-1\}$ and
  $n\in\{0,1,\ldots,k-1\}$. Since there are only finitely many
  elements of the latter shape, this clearly implies finiteness of
  $G/H$. We do this in three steps. We first transform the left
  component into a natural number. Then we turn the left component
  into a number in $\{0,\pm 1,\pm 2,\ldots,\pm q^{k}-1\}$. Finally. we
  bring the right component to a number in $\{0,\ldots,k-1\}$.

  For the first step, consider the element
  $g=(\frac{p}{q^{\ell}},m)\in G$.  By multiplying
  $(-q^{-m-\ell}+q^{-m-\ell+k},0)^p\in H$ to $g$, we obtain
  $(\frac{p}{q^{\ell-k}},m)$. If we repeat this, we end up with an
  element $(p,m)$ with $p\in\Z$ and $m\in\Z$.

  For the second step, consider $(p,m)\in G$ with $p,m\in\Z$. If
  $p\ge q^k$, we multiply with $(1-q^k,0)$ and obtain $(p+1-q^k,0)$,
  where $p+1-q^{k}<p$ (because $k\ge 1$). By repeating this, we end up
  at an element $(p,m)$ with $0\le p<q^k$.  In the case $p<-q^{k}$, we
  just multiply $(-q+q^{k}, 0)=(q-q^k,0)^{-1}$ instead of
  $(q-q^k,0)$. Thus, in general, we obtain an element $(p,m)$ with
  $p\in\{0,\pm 1,\pm 2,\ldots,\pm q^k-1\}$.

  For the third step, we merely reduce the right component modulo $k$:
  By multiplying $(0,k)$ or $(0,-k)$, we can clearly obtain an element
  $(p,m)$ where $m\in\{0,1,\ldots,k-1\}$ and where still $p\in\{0,\pm
  1,\pm 2,\ldots,\pm q^k-1\}$. Thus
  $G/H$ is is finite and recognizability of $S$ follows.
\end{proof}


\fi

\end{document}
